\newtheorem{thm}{Theorem}[section]
\newtheorem{prop}[thm]{Proposition}
\newtheorem{conj}[thm]{Conjecture}
\newtheorem{prob}[thm]{Problem}
\newtheorem{alt}{Theorem}[section]
\theoremstyle{definition}
\newtheorem{defn}[thm]{Definition}
\newtheorem{exmp}[alt]{Example}
\theoremstyle{remark}
\newtheorem{rem}[thm]{Remark}
\newcommand{\sA}{{\mathcal A}}
\newcommand{\sG}{{\mathcal G}}
\newcommand{\sL}{{\mathcal L}}
\newcommand{\sP}{{\mathcal P}}
\newcommand{\bv}{{\bf v}}
\newcommand{\NN}{{\mathbb N}}
\newcommand{\QQ}{{\mathbb Q}}
\newcommand{\ZZ}{{\mathbb Z}}
\numberwithin{equation}{section}
\newcommand{\stdnodesep}{3}
\newcommand{\doffset}{3pt}
\newcommand{\node}[3]{\rput{0}(#2){\ovalnode{#3#1}{\Large #1}}}
\newcommand{\aline}[3]{%
	\ncline[nodesepA=\stdnodesep,nodesepB=\stdnodesep]%
	{->}{#1}{#2}%
	\Aput{#3}%
}
\newcommand{\bline}[3]{%
	\ncline[nodesepA=\stdnodesep,nodesepB=\stdnodesep]%
	{->}{#1}{#2}%
	\Bput{#3}%
}
\newcommand{\dline}[4]{%
	\ncarc[nodesepA=\stdnodesep,nodesepB=\stdnodesep,offset=\doffset]%
	{->}{#1}{#2}%
	\Aput{#3}%
	\ncarc[nodesepA=\stdnodesep,nodesepB=\stdnodesep,offset=\doffset]%
	{->}{#2}{#1}%
	\Aput{#4}%
}
\newcommand{\bcircle}[3]{%
	\nccircle[angleA=#2,nodesepA=\stdnodesep]{->}{#1}{20pt}%
	\Bput{#3}%
}
\title{Intersections of multiplicative translates of \\$3$-adic Cantor sets}
\author{William  Abram}
\author{Jeffrey C. Lagarias}
\thanks{The first author received support from an NSF Graduate Research Fellowship. The second author received support from NSF grants DMS-0801029 and DMS-1101373.}
\address{Department of Mathematics and Computer Science, Hillsdale College, Hillsdale, MI 49242-1205,USA}
\email{wabram@hillsdale.edu}
\address{Department of Mathematics, University of Michigan,
Ann Arbor, MI 48109-1043,USA}
\email{lagarias@umich.edu}
\date{August 13, 2013}
\begin{document}

\begin{abstract}

Motivated by a question of Erd\H{o}s, this paper considers 
 questions concerning the discrete dynamical system on the 
$3$-adic integers $\mathbb{Z}_{3}$ given by multiplication by $2$.
Let the  $3$-adic Cantor set $\Sigma_{3, \bar{2}}$ consist of all $3$-adic integers whose expansions
use only the digits $0$ and $1$.  
The  exceptional set  $\mathcal{E}(\mathbb{Z}_3)$ is the set of all elements of $\mathbb{Z}_3$
 whose forward orbits under  this action  intersects the $3$-adic Cantor set $\Sigma_{3, \bar{2}}$ 
infinitely many times.
It has been shown that this set has Hausdorff dimension at most $\frac{1}{2}$
and it has been  conjectured that it has Hausdorff dimension $0$. 
Approaches to upper bounds on 
the Hausdorff dimensions of these sets leads to study
of intersections of multiplicative translates of Cantor sets by powers of $2$.
More generally, this paper  studies the structure of  finite intersections of general
multiplicative translates 
$S= \Sigma_{3, \bar{2}} \cap \frac{1}{M_1} \Sigma_{3, \bar{2}} \cap \cdots \cap \frac{1}{M_n} \Sigma_{3, \bar{2}}$ by
integers $1 < M_1 < M_2 < \cdots < M_n$.
These sets  are describable as sets of $3$-adic
integers whose $3$-adic expansions have one-sided symbolic dynamics given by a finite automaton.
As a consequence, the Hausdorff dimension of such a set is 
always of the form $\log_{3}(\beta)$ where $\beta$ is an algebraic integer. 
This paper gives a method to determine the automaton for given data $(M_1, ..., M_n)$.
Experimental results indicate that 
the Hausdorff dimension of  such sets depends in a very 
complicated way on the integers $M_1, ..., M_n$.

\end{abstract}

\maketitle

 
\tableofcontents

%
%
%
\section{Introduction}

We study the following problem. 
Let the  $3$-adic Cantor set $\Sigma_3 := \Sigma_{3, \bar{2}}$ be the subset of all
$3$-adic integers whose $3$-adic expansions consist of digits $0$ and $1$ only. 
This set is a well-known fractal having Hausdorff dimension $\dim_{H}(\Sigma_{3}) = \log_3 2 \approx 0.630929$.
By a {\em multiplicative translate} of such a Cantor set we mean 
a multiplicatively rescaled set $r \Sigma_{3}= \{ r x: x\in \Sigma_3\}$,
where we restrict to  $r= \frac{p}{q} \in \QQ^{\times}$ being a rational number  that is $3$-integral, 
meaning that $3$ does not divide $q$, 
In this paper we study  sets
 given as finite intersections of such  multiplicative translates:
\begin{equation}\label{eq100}
 C( r_1, r_2, \cdots, r_n) := \bigcap _{i=1}^{n} \frac{1}{r_i} \Sigma_3,
\end{equation}
where now each $\frac{1}{r_i}$ is $3$-integral. These sets
are fractals and our object is to obtain bounds on their Hausdorff dimensions.
Our motivation for
studying this  problem arose from a problem of Erd\H{o}s \cite{Erd79} which is described in
Section \ref{sec11}.

In principle the Hausdorff dimensions of sets $C( r_1, r_2, \cdots, r_n)$ are explicitly computable
in a closed form.  This comes about as follows. 
We show  each such set has the property that 
the $3$-adic expansions
of all the members of $C( r_1, r_2, \cdots, r_n)$ are characterizable as the output labels of all infinite
paths in a labeled finite automaton which  start from a marked initial vertex.
General sets of such path  labels 
associated to   a finite automaton form  symbolic dynamical systems that we call  {\em path sets}
and which we study in   \cite{AL12a}.  The sets $C( r_1, r_2, \cdots, r_n)$
are then $p$-adic path set fractals (with $p=3$), using  terminology we introduced in \cite{AL12b}.
These sets are  collections of all $p$-adic numbers whose $p$-adic expansions have digits
described by the labels along infinite paths according to  a digit assignment map taking path
labels in the graph to $p$-adic digits.
In \cite[Theorem 2.10]{AL12b} we showed  that a  {\em $p$-adic path set fractal}
is any set $Y$ in $\ZZ_p$ constructed 
 by a $p$-adic analogue of a real number graph-directed fractal
construction, as given in  Mauldin and Williams \cite{MW86}.
This geometric object $Y$ is given as the  set-valued fixed point of a dilation functional equation
using a set of $p$-adic affine maps, cf. \cite[Theorem 2.6]{AL12b}. 
We  showed in  \cite[Theorem 1.4]{AL12b}
 that the if $X$ is  a $p$-adic path set fractal 
then any multiplicative translate $r X$ 
by  a $p$-integral rational number $r$
is also a $p$-adic path set fractal. In addition $p$-adic
path set fractals are closed under set intersection, a property they
inherit from path sets, see \cite[Theorem 1.2]{AL12a}.
 Since the $3$-adic Cantor set is a $3$-adic
path set fractal, the full shift on two symbols, these closure properties immediately
imply that every set $C( r_1, r_2, ..., r_n)$ is a $3$-adic path set fractal. 

In  \cite[Theorem 1.1]{AL12b} we showed that 
the  Hausdorff dimension
of a $p$-adic path set fractal $X$ is directly computable from
the adjacency matrix of a suitable presentation $X$. One has
\[
\dim_{H}(X) = \log_p  \beta,
\] 
in which $\beta$ is the spectral radius $\rho(\mathbf{A})$
 of the adjacency matrix $\mathbf{A}$ of 
 a  finite automaton which gives a suitable presentation of the given
path set; see Section \ref{sec2}. 
This spectral radius coincides with
the   {\em Perron eigenvalue} (\cite[Definition 4.4.2]{LM95}) of the   nonnegative integer matrix $\mathbf{A} \neq 0$,
which is 
the largest real  eigenvalue $\beta \ge 0$ of $\mathbf{A}$.
For adjacency matrices of  graphs containing 
at least one directed cycle, which are nonnegative integer matrices,
 the Perron eigenvalue is necessarily a real algebraic integer, and also has $\beta \ge 1.$
In the case at hand we know a priori that   $1 \le \beta \le 2.$ 
Everything here is algorithmically effective, as discussed in Sections \ref{sec2} and \ref{sec3}.

This paper presents   theoretical and experimental
results about these sets.  In Section \ref{sec3} we give an algorithm to compute an
efficient presentation of the underlying path set of $ \mathcal{C}(1, M)$ for integers $M \ge 1$,
which is simpler than the  general constructions given in \cite{AL12a}, \cite{AL12b}.
We extend this method to $ \mathcal{C}(1, M_1, M_2, ..., M_n)$.
We give a complete
analysis of the structure of the resulting path set presentations for two infinite families $\mathcal{C}(1, M_k)$
of integers $\{ M_k:  k \ge 1\}$ whose $3$-adic
expansions take an especially simple form. 
These examples exhibit rather complicated automata
in the presentations.  We experimentally  use the algorithm for $ \mathcal{C}(1, M_1, M_2, ..., M_n)$
to compute various  examples  indicating that the automata depend in an extremely
 complicated way on $3$-adic  arithmetic properties of $M$. 
 This complexity is reflected in the behavior of the Hausdorff dimension function,
 and leads to many open questions.

%
%
%
\subsection{Motivation: Erd\H{o}s problem}\label{sec11}

Erd\H os \cite{Erd79} conjectured that for every $n \geq 9$, the ternary expansion of $2^n$ contains  the 
ternary digit $2$. 
A weak version of this conjecture asserts that there are finitely many $n$ such that the ternary expansion of $2^n$ 
consists of $0$'s and $1$'s. Both versions of this  conjecture  appear  difficult.

In \cite{Lag09} the second author proposed a $3$-adic generalization of this problem, as follows.
Let $\mathbb{Z}_3$ denote the $3$-adic integers, and let a $3$-adic integer $\alpha$ have 
$3$-adic expansion
\[
(\alpha)_3 := (\cdots a_2 a_1 a_0)_3= a_0 + a_1 \cdot 3 + a_2 \cdot 3^2 + \cdots,       ~~\mbox{with all}~~ a_i \in \{ 0, 1, 2\}.
\]
 
\begin{defn} \label{defn-exceptional}
The {\em $3$-adic exceptional set} $\mathcal{E}(\mathbb{Z}_3)$
is defined by
\[
\mathcal{E}(\mathbb{Z}_3) := 
\{\lambda \in \mathbb{Z}_3 : \text{for infinitely many $n \ge 0$ the expansion $(2^n \lambda)_3$ omits the digit $2$}\}.
\]
\end{defn}
 The weak version  of Erd\H os's conjecture above 
  is equivalent to the assertion that $\mathcal{E}(\mathbb{Z}_3)$
  does not contain the integer $1$.

The exceptional set seems an interesting object in its own right.
It is forward invariant under multiplication by $2$, and   
one may expect it to be a very small set in terms of measure or dimension.
At present  it remains possible 
 that the $\mathcal{E}(\mathbb{Z}_3)$
 is a countable set, or even that it consists of
the single element $\{ 0\}.$ 
In 2009 the  second author put forward the following conjecture 
 asserting that  the exceptional set is small in the sense of Hausdorff dimension ( \cite[Conjecture 1.7]{Lag09}). 

\begin{conj}\label{cj11}{\em (Exceptional Set Conjecture)}
The $3$-adic exceptional set $\mathcal{E}(\mathbb{Z}_3)$ 
has Hausdorff dimension zero, i.e. 
\begin{equation}
\dim_{H}(\mathcal{E}(\mathbb{Z}_3) )=0.
\end{equation}
\end{conj}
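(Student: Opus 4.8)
\emph{Step 1: reduction to finite intersections.} First I would rewrite the exceptional set as a $\limsup$. Since $2$ is a unit in $\mathbb{Z}_3$, the condition that $(2^n\lambda)_3$ omits the digit $2$ is precisely $\lambda\in\frac{1}{2^n}\Sigma_3$, so
\[
\mathcal{E}(\mathbb{Z}_3)=\limsup_{n\to\infty}\frac{1}{2^n}\Sigma_3=\bigcap_{N\ge0}\bigcup_{n\ge N}\frac{1}{2^n}\Sigma_3 .
\]
If $\lambda\in\mathcal{E}(\mathbb{Z}_3)$ and $k\ge1$, pick $n_1<n_2<\cdots<n_k$ with $2^{n_i}\lambda\in\Sigma_3$; then $\mu:=2^{n_1}\lambda$ satisfies $2^{m_i}\mu\in\Sigma_3$ for $m_i:=n_i-n_1$, i.e.\ $\mu\in C(1,2^{m_2},\ldots,2^{m_k})$ with $0<m_2<\cdots<m_k$. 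As $x\mapsto 2x$ is an isometry of $\mathbb{Z}_3$ and Hausdorff dimension is countably stable, this yields, for every $k\ge1$,
\[
\dim_{H}\mathcal{E}(\mathbb{Z}_3)\ \le\ D_k:=\sup\bigl\{\dim_{H} C(1,2^{m_2},\ldots,2^{m_k}) : 0<m_2<\cdots<m_k\bigr\}.
\]
Adjoining one more translate only shrinks the intersection, so $D_{k+1}\le D_k$; hence Conjecture~\ref{cj11} is equivalent to $\lim_{k\to\infty}D_k=0$, and the known bound $\dim_{H}\mathcal{E}(\mathbb{Z}_3)\le\tfrac{1}{2}$ of \cite{Lag09} amounts to bounding some fixed $D_k$ below $\log_3 2$.

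\emph{Step 2: spectral reformulation and a dimension-drop lemma.} By the results recalled above, each $C(1,2^{m_2},\ldots,2^{m_k})$ is a $3$-adic path set fractal presented by a finite automaton --- one produced by the algorithm of Section~\ref{sec3} --- with $\dim_{H} C(1,2^{m_2},\ldots,2^{m_k})=\log_3\beta$, where $\beta\in[1,2]$ is the Perron eigenvalue of the adjacency matrix. So the problem becomes: these Perron eigenvalues tend to $1$ \emph{uniformly} over all exponent tuples $(m_2,\ldots,m_k)$ as $k\to\infty$. The step I would work hardest on is a quantitative \emph{dimension-drop estimate}: a constant $c<1$ and a ``genericity'' (non-resonance) condition on the new exponent $m_{k+1}$ relative to $m_2,\ldots,m_k$ under which adjoining the constraint ``$2^{m_{k+1}}\lambda$ omits the digit $2$'' multiplies $\log_3\beta$ by at most $c$. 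Iterating such a lemma forces $D_k\to0$. Heuristically this is plausible: ignoring correlations, each constraint deletes a fraction of admissible digit strings comparable to a power of $2/3$, so $k$ roughly independent constraints should cut the count of admissible length-$L$ prefixes from $3^{(\log_3 2)L}$ down to $3^{o_k(1)L}$.

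\emph{The main obstacle.} The difficulty --- and the reason the present paper offers experiments in place of a proof --- is that the automaton, hence $\beta$, depends on the $3$-adic expansions of the $2^{m_i}$ in a genuinely erratic way: there are resonances (near-periodic expansions, or small $3$-adic valuation of differences $2^{m_i}-2^{m_j}$) for which $\beta$ stays close to $2$ even for a single pair. The crux is therefore a \emph{simultaneous non-resonance} statement: as $k$ grows one cannot keep all $\binom{k}{2}$ pairs (and higher tuples) resonant at once, so the dimension-drop must eventually take effect; turning this into a uniform quantitative bound seems to need a genuinely new idea, and is the heart of the matter. A possible alternative that avoids the automata is to encode the joint avoidance condition as a transfer (Ruelle) operator for the $3$-adic odometer twisted by the digit-$2$ cocycle, so that $\dim_{H} C(1,2^{m_2},\ldots,2^{m_k})$ is controlled by the leading eigenvalue of a product of $k$ such twisted operators; the conjecture then reduces to a spectral-gap estimate for that product, uniform in the shifts $m_i$ --- which is again the essential difficulty. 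In either formulation Step~1 is routine and the uniform decay of $D_k$ is the true obstacle.
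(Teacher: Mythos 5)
The statement you are asked about is a \emph{conjecture}: the paper does not prove it, states it as open, and your proposal does not prove it either --- you yourself flag that the decisive step (a uniform dimension-drop estimate) is missing and ``seems to need a genuinely new idea.'' So there is no complete argument to certify here. Your Step 1 is essentially the paper's own reduction in Section \ref{sec11}: the passage to $\mu = 2^{n_1}\lambda$ is exactly the paper's reduction from $\mathcal{E}$ to $\mathcal{E}_1$ and from $\mathcal{C}(2^{m_1},\ldots,2^{m_k})$ to $\mathcal{C}(1,2^{m_2-m_1},\ldots)$, and your $D_k$ is the paper's $\dim_H(\mathcal{E}_1^{(k)}(\mathbb{Z}_3))$ with $\lim_k D_k = \Gamma$. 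But your claim that the conjecture is \emph{equivalent} to $\lim_k D_k = 0$ is wrong: the containment $\mathcal{E}(\mathbb{Z}_3) \subseteq \bigcap_k \mathcal{E}^{(k)}(\mathbb{Z}_3)$ only gives $\dim_H \mathcal{E}(\mathbb{Z}_3) \le \Gamma$, and Hausdorff dimension is not continuous along decreasing intersections; the paper explicitly notes that $\dim_H(\mathcal{E}(\mathbb{Z}_3)) < \Gamma$ may hold. So $\lim_k D_k = 0$ would suffice, but its failure would not refute the conjecture.

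The more serious problem is that the heuristic underlying your Step 2 --- ``$k$ roughly independent constraints should cut the count of admissible prefixes down to $3^{o_k(1)L}$'' --- is refuted by the paper's own results for general multipliers. Theorem \ref{th51a} exhibits a single set $Y$ of dimension $\frac{1}{2}\log_3 2$ contained in $\mathcal{C}(1,N_3,N_5,N_7,\ldots)$ for the infinite family $N_{2k+1}=3^{2k+1}+1$, and Theorem \ref{th413} shows $\dim_H \mathcal{C}(1,N_{k},\ldots,N_{k+n-1})$ stays bounded below by roughly $\frac12\log_3 2$ uniformly in $n$. Hence ``simultaneous non-resonance'' simply fails for arbitrary multipliers $\equiv 1 \pmod 3$: infinitely many constraints can coexist with positive dimension. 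Any dimension-drop lemma of the kind you propose must therefore exploit a property special to the powers $2^{m}$ --- the paper's suggested route (Section \ref{sec15}) is that the number of nonzero ternary digits $n_3(2^m)\to\infty$ (Senge--Straus, Stewart), combined with a hoped-for statement that $\dim_H\mathcal{C}(1,M)\to 0$ as $n_3(M)\to\infty$. Your proposal, as written, does not isolate any such property, so the program as stated cannot close; the paper's data (Table 5.2, and the remark that no $\mathcal{C}(1,2^{m_1},2^{m_2},2^{m_3})$ is currently known to have positive dimension) is consistent with $D_k\to 0$ for powers of $2$, but nothing in your outline distinguishes that case from the $N_k$ counterexamples.
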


\noindent 

As limited evidence in favor  of this conjecture,    the paper  \cite{Lag09} showed  that the 
 Hausdorff dimension of $\mathcal{E} (\mathbb{Z}_3)$ 
 is at most $\frac{1}{2},$ as explained below.
 That  paper  initiated a strategy to obtain upper bounds for
$\dim_{H}(\mathcal{E}(\mathbb{Z}_3) )$  
 based on the containment relation
\begin{equation}\label{1013}
 \mathcal{E}(\mathbb{Z}_3) \subseteq \bigcap_{k=1}^{\infty} \mathcal{E}^{(k)}(\mathbb{Z}_3),
\end{equation} 
where 
\begin{equation}\label{eq120}
\mathcal{E}^{(k)}(\mathbb{Z}_3) := 
\{\lambda \in \mathbb{Z}_3 : \text{at least $k$ values of $(2^n \lambda)_3$ omit the digit 2}\}.
\end{equation}

These sets form a nested family 
 \[
 \Sigma_{3, \bar{2}}=  \mathcal{E}^{(1)}(\mathbb{Z}_3 ) \supseteq \mathcal{E}^{(2)}(\mathbb{Z}_3) \supseteq 
  \mathcal{E}^{(3)}(\mathbb{Z}_3 ) \supseteq \cdots
 \]
The containment relation (\ref{1013}) immediately implies  inequalities relating the Hausdorff dimension of these sets, namely
\begin{equation}\label{1015}
\dim_{H}(\mathcal{E}(\mathbb{Z}_3))\le  \Gamma,
\end{equation}
where $\Gamma$ is defined by
\begin{equation}\label{Gamma}
 \Gamma := \lim_{k \to \infty} \dim_{H}(\mathcal{E}^{(k)}(\mathbb{Z}_3)).
\end{equation}

The inequality \eqref{1015} raises the subsidiary problem of obtaining upper bounds for $\Gamma$,
which in turn requires obtaining  bounds for the individual $\dim_{H}(\mathcal{E}^{(k)}(\mathbb{Z}_3))$. 
We note the possibility that $\dim_{H}(\mathcal{E}(\mathbb{Z}_3))<  \Gamma$ may hold. 

The analysis of the sets $\mathcal{E}^{(k)}(\mathbb{Z}_3)$ for $k \ge 2$ leads to the study of particular
sets of the kind \eqref{eq100}
considered in this paper. 
We have  
\begin{equation}
 \mathcal{E}^{(k)}(\mathbb{Z}_3) = \bigcup_{0 \leq m_1 < \ldots < m_k} \mathcal{C}(2^{m_1},\ldots,2^{m_k}).
\end{equation}

We next give a simplification, showing that for  the purposes of computing
Hausdorff dimension we may, without loss of generality,  restrict this set union to subsets having $m_1=0$
so that $2^{m_1}=1.$

\begin{defn} \label{de15a}
The {\em restricted $3$-adic exceptional set} $\mathcal{E}_1(\mathbb{Z}_3)$ is given by
$$
\mathcal{E}_1(\mathbb{Z}_3) := 
\{\lambda \in \mathbb{Z}_3 : \text{for $n=0$ and infinitely many other $n$,   $(2^n \lambda)_3$ omits the digit $2$}\}.
$$
\end{defn}

It is easy to see that 
$$
\mathcal{E}(\mathbb{Z}_3)= \bigcup_{n=0}^{\infty} \frac{1}{2^n} \mathcal{E}_1(\mathbb{Z}_3).
$$
Since the right side  is a countable union of sets we obtain 
$$
\dim_{H} ( \mathcal{E}(\mathbb{Z}_3)) = \sup_{n \ge 0} \Big(\dim_{H} (\frac{1}{2^n} \mathcal{E}_1(\mathbb{Z}_3))\Big)
= \dim_{H} (\mathcal{E}_1(\mathbb{Z}_3)).
$$
and we also have $\mathcal{E}_1(\mathbb{Z}_3) \subset \Sigma_{3, \bar{2}}$.
Now set 
$$
\mathcal{E}_1^{(k)}(\mathbb{Z}_3) := 
\{\lambda \in \Sigma_{3, \bar{2}} : \text{for at least $k$ values of $n \ge 0$,  $(2^n \lambda)_3$ omits the digit 2}\}.
$$
For $0< m_1<m_2 < \cdots < m_k$ we have  the
set identities
$$\mathcal{C}(2^{m_1},\ldots,2^{m_k}) = \frac{1}{2^{m_1}}\mathcal{C}(1,2^{m_2-m_1},\ldots,2^{m_k-m_1}).$$
These identities    yield 
$
\mathcal{E}^{(k)}(\mathbb{Z}_3) = \bigcup_{n=0}^{\infty} 2^{-n} \mathcal{E}_1^{(k)}(\mathbb{Z}_3).
$
Again, since  this is a countable union of sets, we obtain  the equality
$$
\dim_{H}( \mathcal{E}^{(k)}(\mathbb{Z}_3)) = \sup_{k \ge 1}  \Big( \dim_{H}(2^{-n} \mathcal{E}_1^{(k)}(\mathbb{Z}_3)\Big)= \dim_{H}(\mathcal{E}_1^{(k)}(\mathbb{Z}_3))
$$
asserted above.
It also follows that 
\begin{equation}\label{newgamma}
\Gamma = \lim_{k \to \infty} \dim_{H}(\mathcal{E}_1^{(k)}(\mathbb{Z}_3)).
\end{equation}

We now have
\begin{equation}
\mathcal{E}_1^{(k)}(\mathbb{Z}_3) = \bigcup_{0 \leq m_1 < \ldots < m_{k-1}} \mathcal{C}(1, 2^{m_1},\ldots,2^{m_{k-1}}).
\end{equation}
The right side of this expression  is a countable union of sets, so we have 
\begin{equation}\label{1016}
\dim_{H}( \mathcal{E}_1^{(k)}(\mathbb{Z}_3) ) = 
\sup_{0 \leq m_1 < \ldots < m_{k-1}} \Big( \dim_{H}\big(\mathcal{C}(1, 2^{m_1},\ldots,2^{m_{k-1}})\big)\Big) .
\end{equation}
Upper bounds for the right side of this formula are obtained by bounding above
the Hausdorff dimensions of all the individual sets $\mathcal{C}(1, 2^{m_1},\ldots,2^{m_{k-1}})$,
of the form (1.1). Lower bounds may be obtained
by determining the Hausdorff dimension of specific individual sets $\mathcal{C}(1, 2^{m_1},\ldots,2^{m_{k-1}})$.
By this means the second author \cite[Theorem 1.6 (ii)]{Lag09} obtained the upper bound
\begin{equation}\label{lowdim11}
\Gamma \le \dim_{H} (\mathcal{E}^{(2)}(\mathbb{Z}_3) ) =\dim_{H} (\mathcal{E}_1^{(2)}(\mathbb{Z}_3) )
\le \frac{1}{2},
\end{equation}
and using \eqref{1015} we conclude that 
\begin{equation}\label{record-bound}
\dim_{H} (\mathcal{E}(\ZZ_3)) \le \frac{1}{2}.
\end{equation}

%
%
%
\subsection{Generalized  exceptional set problem}\label{sec11a}

We are interested in obtaining improved upper bounds on $\dim_{H}(\mathcal{E}(\mathbb{Z}_3))$.
To progress further with  the approach above, one needs
a better understanding of the structure of sets  $\mathcal{C}(1, 2^{m_1}, ..., 2^{m_k})$, 
with the hope to obtain uniform  bounds on their Hausdorff dimension. 

One approach to upper bounding the exceptional set is to relax its defining conditions to
allow arbitrary  positive integers  $M$ in 
place of powers of $2$. Since  the $3$-adic Cantor set $\Sigma_{3, \bar{2}}$
is forward invariant under multiplication by $3$, we  will restrict to integers
$M \not\equiv 0\, (\bmod \, 3)$.
 
For  application to the exceptional set 
 $\mathcal{E}(\mathbb{Z}_3)$, the discussion in Section \ref{sec11}
 indicates  that it suffices to consider  
 the restricted
 family of sets $\mathcal{C}(1,M_1,\ldots,M_n)$, i.e. taking $M_0=1$.
 We define a relaxed version of the restricted $3$-adic exceptional set, as follows.

\begin{defn} \label{defn-generalized}
The {\em $3$-adic generalized 
exceptional set} is the set
\[\mathcal{E}_{\star}(\mathbb{Z}_3) := \{\lambda \in \mathbb{Z}_3
: \text{there are infinitely many 
$M  \ge 1$, $M \not\equiv 0 ~(\bmod  \, 3)$, including \text{$M=1$},    }\]
\[ \text{such that the $3$-adic expansion} ~(M \lambda)_3 \text{ omits the digit $2$}\}.\]
\end{defn}

When considering intersective sets $C(1,M_1,\ldots,M_n)$, we can then further restrict to require all
 $M_i \equiv 1 ~(\bmod \, 3)$,
 since any  $M \equiv 2 ~(\bmod \, 3)$ has  $C(1,M)= \{ 0\}.$
 We have  
$\mathcal{E}_1 (\mathbb{Z}_3) \subset \mathcal{E}_{\star} (\mathbb{Z}_3)  \subset \Sigma_{3, \bar{2}}$
and therefore
\begin{equation}
\dim_{H}( \mathcal{E}(\mathbb{Z}_3)) = \dim_{H}( \mathcal{E}_1(\mathbb{Z}_3)) \le \dim_{H}( \mathcal{E}_{\star}(\mathbb{Z}_3)).
\end{equation} 
Thus upper bounds for the Hausdorff dimension of the generalized exceptional set yield upper
bounds for that of the exceptional set.

\begin{prob}\label{pr14} {\em (Generalized Exceptional Set Problem )}
Determine  upper and lower bounds for the Hausdorff dimension of
the  generalized exceptional set $\mathcal{E}_{\star} (\mathbb{Z}_3)$. 
In particular, determine whether   $\dim_{H} (\mathcal{E}_{\star} (\mathbb{Z}_3)) =0$ or 
$\dim_{H} (\mathcal{E}_{\star} (\mathbb{Z}_3)) >0$ holds.
\end{prob}

We next define a family of sets in parallel  to $\mathcal{E}_1^{(k)}(\ZZ_3)$  above.
We define
\[
 \mathcal{E}_\star^{(k)}(\mathbb{Z}_3) := 
 \{ \lambda \in \mathbb{Z}_3 : \text{there exist  \,  $1 = M_1 < M_2< \cdots < M_k$, \,with all $M_i  \equiv \, 1 (\bmod\, 3)$},   
  \]
\[ \quad\quad \text{such that the $3$-adic expansion} ~(M_i \lambda)_3 \text{ omits the digit $2$}\}.
\]
 
Then in  parallel to the case above, we have 
\[
\mathcal{E}_\star^{(k)}(\mathbb{Z}_3) = \bigcup_{{1=M_1 < \ldots < M_{k-1}}\atop{M_i \equiv 1 (\bmod ~3)}} 
\mathcal{C}(1, M_1,\ldots,M_{k-1}).
\]
In consequence we have  the inclusion 
\[
\mathcal{E}_\star(\mathbb{Z}_3) \subseteq \bigcap_{k=1}^{\infty} \mathcal{E}_\star^{(k)}(\mathbb{Z}_3).
\]
This inclusion yields the bound
\begin{equation}\label{eq112}
\dim_{H}( \mathcal{E}_\star(\mathbb{Z}_3)) \le   \Gamma_{*},
\end{equation}
where we define
\begin{equation}\label{Gamma-star}
\Gamma_{\star}:= \lim_{k \to \infty} \dim_{H}( \mathcal{E}_\star^{(k)}(\mathbb{Z}_3)).
\end{equation}
As far as we know it is
possible that $\dim_{H}( \mathcal{E}_\star(\mathbb{Z}_3)) <   \Gamma_{*}$ may occur.

The second author  \cite[Theorem  1.6]{Lag09}  obtained the upper bound
\begin{equation}
\Gamma_{\ast} \le \dim_{H}( \mathcal{E}_{\star}^{(2)}(\mathbb{Z}_3)) \le \frac{1}{2},
\end{equation}
which in fact yielded \eqref{lowdim11}. 

Our  interest in the generalized exceptional set problem  stemmed from the fact
that  if it were true that 
$\dim_{H} (\mathcal{E}_{\star} (\mathbb{Z}_3)) =0$,
 then the  Exceptional Set Conjecture \ref{cj11} would follow.  
 However a  main result  of our investigation  
establishes that this does not hold:  we obtain the  lower bounds  
$$
\Gamma_{\star} \ge \dim_{H}(\mathcal{E}_{\star} (\mathbb{Z}_3))  \ge \frac{1}{2} \log_3 2 \approx 0.315464,
$$  
see Theorem \ref{th110} below.  This inconvenient fact limits the upper bounds
attainable on $\dim_{H}(\mathcal{E}(\mathbb{Z}_3))$ via the relaxed problem.

%
%
%
\subsection{Algorithmic  Results}\label{sec12}

We study the size of intersections of multiplicative translates of
the $3$-adic Cantor  set $\Sigma_3:= \Sigma_{3, \bar{2}}$, 
as measured by Hausdorff dimension.
We study the sets
$$\mathcal{C}(1, M_1,\ldots,M_n):=
 \Sigma_{3, \bar{2}} \cap \frac{1}{M_1}\Sigma_{3, \bar{2}} \cap  \cdots \cap \frac{1}{M_n}\Sigma_{3, \bar{2}}.
$$
where $1< M_1 < \cdots < M_n$ are positive integers.
As remarked above, via results in \cite{AL12a}, \cite{AL12b} these 
 sets have a nice description, with their members  having $p$-adic expansions describable
by finite automata, 
which permits effective computation of their Hausdorff dimension. 
These results are reviewed in Section \ref{sec2}, and the
necessary definitions for  presentations of a path set used
in the following theorem appear there.

\begin{thm} \label{th12} {\rm (Dimension of $C(1, M_1, ..., M_n)$)} 

(1) There is a terminating algorithm that takes as input any finite set of integers \\
$1\leq M_1 < \ldots < M_n$, 
and gives  as output a labeled directed graph 
$\mathcal{G}=(G,\mathcal{L})$ with a marked starting vertex $v_0$,   
  which is a presentation of a path set  $X= X(1, M_1, M_2, \cdots , M_n)$ 
  describing the $3$-adic expansions of the  elements of the space
 \[
 \mathcal{C}(1, M_1, ..., M_n):= \Sigma_3 \cap \frac{1}{M_1}\Sigma_3 \cap \ldots \cap \frac{1}{M_n}\Sigma_3.
 \]
 This presentation is right-resolving and all vertices are reachable from the marked vertex. 
 The graph $G$ has at most $\prod_{i=1}^n (1 + \lfloor \frac{1}{2}  M_i \rfloor)$ vertices.

(2) The topological entropy $\beta$ of the path set  $X$ is the Perron eigenvalue of the adjacency matrix $A$ of
the directed graph  $G$.
It is   a real algebraic  integer satisfying $1 \leq \beta \leq 2$. Furthermore the Hausdorff dimension 
$$
\dim_{H} (\mathcal{C}(1, M_1, ..., M_n)) = \log_3 \beta.
$$
This dimension falls in the interval $[0, \log_3 2]$.
\end{thm}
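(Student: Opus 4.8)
The plan is to prove part (1) by an explicit construction of the automaton via a breadth-first "state tracking" argument, and then derive part (2) as a formal consequence of the general path set theory recalled in the excerpt. For part (1), I would first treat the base case $\mathcal{C}(1,M)$ with a single extra translate. The idea is that membership of $\lambda \in \Sigma_3$ in $\frac{1}{M}\Sigma_3$ means $M\lambda \in \Sigma_3$, i.e. the $3$-adic digits of $M\lambda$ all lie in $\{0,1\}$. As we read the digits $a_0, a_1, a_2, \ldots$ of $\lambda$ one at a time (least significant first), the digits of $M\lambda$ are produced by the schoolbook multiplication algorithm, whose only memory is the carry. So the natural state is the pair (amount we still need from $\lambda$ already committed, current carry into the next digit of $M\lambda$); because $\lambda$'s digits are forced into $\{0,1\}$, the carry stays bounded by roughly $\frac{M}{2}$, giving the vertex bound $1 + \lfloor \frac12 M\rfloor$. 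One checks that the transition "on reading digit $a\in\{0,1\}$ of $\lambda$, the next digit of $M\lambda$ is determined and must be in $\{0,1\}$, else the edge is forbidden" defines a right-resolving labeled graph (at most one edge per label out of each vertex), that all surviving states are reachable from the start state (carry $=0$), and that the infinite label sequences are exactly the $3$-adic expansions of $\mathcal{C}(1,M)$. For the general case $\mathcal{C}(1,M_1,\ldots,M_n)$, I would run the same carry-tracking simultaneously for all $M_i$: a state is a tuple of carries $(c_1,\ldots,c_n)$, the product vertex bound is $\prod_i(1+\lfloor\frac12 M_i\rfloor)$, and one again prunes any transition that would force a forbidden digit ($2$) in some $M_i\lambda$ or in $\lambda$ itself. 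Right-resolvingness and reachability are inherited coordinatewise, and by the intersection-closure of path sets (\cite[Theorem 1.2]{AL12a}) the resulting path set is exactly $X(1,M_1,\ldots,M_n)$. Termination and effectivity are immediate since the vertex set is finite and explicitly bounded.

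For part (2), the work is essentially a citation assembly. By part (1), $\mathcal{C}(1,M_1,\ldots,M_n)$ is presented by a finite right-resolving labeled graph $\mathcal{G}=(G,\mathcal{L})$; hence it is a $3$-adic path set fractal in the sense of \cite[Theorem 2.10]{AL12b}, and \cite[Theorem 1.1]{AL12b} gives $\dim_H(\mathcal{C}(1,M_1,\ldots,M_n)) = \log_3 \beta$ where $\beta = \rho(A)$ is the Perron eigenvalue of the adjacency matrix $A$ of $G$ (equivalently, the topological entropy of the path set). The statement that $\beta$ is a real algebraic integer follows because $A$ is a nonnegative integer matrix: its characteristic polynomial is monic with integer coefficients, so every eigenvalue, in particular the Perron root, is an algebraic integer; and if $G$ contains at least one directed cycle then $\beta\ge1$. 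The bound $\beta\le 2$ comes from the containment $\mathcal{C}(1,M_1,\ldots,M_n)\subseteq\Sigma_3$: since the Cantor set $\Sigma_3$ is the full $2$-shift presented by a single vertex with two loops (adjacency "matrix" $[2]$), and entropy/Perron eigenvalue is monotone under subsystems, we get $\beta\le 2$; combined with $\beta\ge1$ this gives $\dim_H \in [0,\log_3 2]$. (Two small degenerate points to address: if some $M_i\equiv 2\pmod 3$ then $\mathcal{C}=\{0\}$ has a one-vertex presentation with no cycle, $\beta$ is then taken to be $1$ with dimension $0$, consistent with the interval; and $\beta=1$ corresponds to a finite or "eventually periodic" set of dimension $0$.)

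I expect the main obstacle to be the careful bookkeeping in part (1): proving that the carry values really do stay in the claimed range $\{0,1,\ldots,\lfloor\frac12 M_i\rfloor\}$ once one restricts the input digits of $\lambda$ to $\{0,1\}$, and proving rigorously that the pruned graph's infinite-path label set coincides with the set of $3$-adic expansions of $\mathcal{C}$ (neither too big, because forbidden transitions are removed, nor too small, because every valid $\lambda$ has its digit sequence realized by a path that is never forced to prune). The right-resolving property needs the observation that given the current state and the next digit of $\lambda$, the next digit of each $M_i\lambda$ and the next carry are uniquely determined, so there is at most one outgoing edge per label. Establishing reachability of all states from the start vertex $v_0$ (carry tuple $\mathbf{0}$) is a short forward-induction argument: one shows that each admissible carry tuple arises after finitely many digit choices. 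Everything downstream of this — part (2) and the algebraicity and interval claims — is then a routine invocation of \cite{AL12a,AL12b} and basic Perron–Frobenius theory, so no genuine difficulty remains there.
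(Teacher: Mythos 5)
Your proposal is correct and follows essentially the same route as the paper: a carry-tracking automaton for the single-translate case $\mathcal{C}(1,M)$ (the paper's Algorithm A, with the same vertex bound $1+\lfloor M/2\rfloor$ proved by induction on the carry), the tuple-of-carries product for the intersection (which the paper phrases as the label product $\mathcal{G}_1\star\cdots\star\mathcal{G}_n$ of the individual presentations, citing the intersection-closure of path sets), and for part (2) the same citation assembly from the general $p$-adic path set fractal dimension formula together with Perron--Frobenius theory. The only (immaterial) difference is that you obtain $\beta\le 2$ from entropy monotonicity under the containment $\mathcal{C}\subseteq\Sigma_3$, whereas the paper notes directly that every row sum of the adjacency matrix is at most $2$.
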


This  construction is quite explicit in the special case $\mathcal{C}(1, M)$.
In that case already the associated graphs $G$ can be very complicated,
and there exist examples where the graph has an arbitrarily large number of
strongly connected components, cf. \cite{ABL13}.

We have computed Hausdorff dimensions of  many
examples of such intersections. In the process we have found some infinite families of integers
where the graph structures are analyzable, see Section \ref{sec4} and \cite{ABL13}.
From the viewpoint of fractal constructions, the sets constructed give specific interesting examples of 
graph-directed fractals, which appear to have structure depending on the integers
$(M_1, .., M_n)$ in an intricate way.

%
%
%

\subsection{Hausdorff dimension results: Two infinite families }\label{sec13a}
There are some simple 
properties of the $3$-adic expansion of $M$ (which coincides with the ternary expansion of $M$, read backwards)
which restrict the Hausdorff dimension of sets $\mathcal{C}(1, M).$
We begin with some simple restrictions on the Hausdorff dimension which
can be read off from the $3$-adic  expansion of $M$; this coincides with the ternary
expansion of $M$, written $(M)_3$, written backwards, where  we write the ternary expansion
$$
(M)_3 := (a_k a_{k-1} \cdots a_1 a_0)_ 3 , \quad\quad {for} \quad M = \sum_{j=0}^k a_j 3^j.
$$
If the first nonzero $3$-adic digit $a_0= 2$, then $\mathcal{C}(1, M)=\{0\}$,
whence   its Hausdorff dimension $\dim_{H}(\mathcal{C}(1, M))=0$. On the other hand, if 
the positive integers $M_1, ..., M_k$ all  all digits $a_j= 0$ or $a_j=1$ in
their $3$-adic expansions, then the Hausdorff dimension 
$\dim_{H}(\mathcal{C}(1, M_1, M_2, ..., M_k))$ must be positive. 

We have found several infinite families  of integers having ternary expansions of
a simple form, whose path set  presentations have a regular structure in the family parameter $k$,
that permits  their Hausdorff dimension to be determined.
The simplest family  takes $M_1= 3^k = (10^k)_3$.  In this trivial case
$\mathcal{C}(1, 3^k) = \Sigma_{3, \bar{2}}$, whence
\begin{equation}
\dim_{H}( \mathcal{C}(1, M_k)) = \log_3 2 \approx 0.630929.
\end{equation}
In Section \ref{sec4} we analyze  two other  infinite families in  detail, as follows.
The first of these families is    $L_k=  \frac{1}{2}(3^{k}-1)  = (1^{k})_3$, for $k \ge 1$.


\begin{thm}\label{th17a} {\rm (Infinite Family $L_k=\frac{1}{2}(3^k-1)$)}

 (1) Let  $L_k=  \frac{1}{2}(3^{k}-1)  = (1^{k})_3$. The path set presentation $(\sG, v_0)$ for 
 the path set $X(1, L_k)$ underlying $\mathcal{C}(1, L_k))$
 has exactly $k$ vertices and is strongly connected. 
 
 (2) For every $k \ge 1$, 
\[
\dim_H(\mathcal{C}(1,L_k) =  \dim_{H} \mathcal{C}(1, (1^k)_3)
= \log_3 \beta_k, 
\]
where $\beta_k $ is the unique real root greater than $1$ of $\lambda^k - \lambda^{k-1}- 1=0$.

(3) For all $k \ge 3$ there holds
\[
\dim_{H} \Big(\mathcal{C}(1,L_k)\Big) = \frac{ \log_3 k}{k} + O\left(\frac{\log\log (k)}{k}\right).
\] 
\end{thm}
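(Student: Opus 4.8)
The plan is to derive the asymptotics of $\dim_H(\mathcal{C}(1,L_k)) = \log_3 \beta_k$ directly from the defining equation $\lambda^k - \lambda^{k-1} - 1 = 0$ established in part (2), using elementary asymptotic analysis of the unique real root $\beta_k > 1$. First I would rewrite the equation in the form $\beta_k^{k-1}(\beta_k - 1) = 1$, so that $\beta_k - 1 = \beta_k^{-(k-1)}$. This shows immediately that $\beta_k \to 1$ as $k \to \infty$, and moreover that $\beta_k = 1 + \beta_k^{-(k-1)}$, which is small and positive. Writing $\beta_k = 1 + \epsilon_k$ with $\epsilon_k \to 0^+$, the equation becomes $\epsilon_k = (1+\epsilon_k)^{-(k-1)}$, and taking logarithms gives $\log \epsilon_k = -(k-1)\log(1+\epsilon_k) = -(k-1)\epsilon_k(1 + O(\epsilon_k))$.

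The key step is to bootstrap this implicit relation into an explicit asymptotic for $\epsilon_k$. From $\log \epsilon_k \approx -(k-1)\epsilon_k$, a first approximation is $\epsilon_k \approx \frac{\log(1/\epsilon_k)}{k-1}$; iterating once (substituting $\epsilon_k \asymp 1/k$ into the right-hand side) yields $\epsilon_k = \frac{\log k}{k}(1 + o(1))$, and a more careful second iteration gives
\[
\epsilon_k = \frac{\log k}{k} + \frac{\log\log k}{k} + O\!\left(\frac{1}{k}\right),
\]
or in any case $\epsilon_k = \frac{\log k}{k} + O\!\left(\frac{\log\log k}{k}\right)$. Then since $\dim_H(\mathcal{C}(1,L_k)) = \log_3 \beta_k = \log_3(1+\epsilon_k) = \frac{\epsilon_k}{\log 3}(1 + O(\epsilon_k)) = \log_3 e \cdot \epsilon_k + O(\epsilon_k^2)$, and $\epsilon_k^2 = O\!\left(\frac{(\log k)^2}{k^2}\right)$ is absorbed into the error term, I obtain
\[
\dim_H(\mathcal{C}(1,L_k)) = \frac{\log_3 k}{k} + O\!\left(\frac{\log\log k}{k}\right),
\]
as claimed. (One should double-check the constant: $\log_3 e \cdot \log k = \log_3 e \cdot \ln k = \log_3 k$, so the leading coefficient is exactly $1$, matching the statement.)

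The main obstacle — really the only nontrivial point — is controlling the bootstrap carefully enough that the error term genuinely is $O(\log\log k / k)$ and not something larger like $O(\log\log k \cdot \log k / k)$ or $O((\log\log k)^2/k)$. This requires being honest about the two nested logarithms: one must verify that the correction from replacing $\log(1/\epsilon_k)$ by $\log k$ contributes a term of exact order $\frac{\log\log k}{k}$, and that the error from the $\log(1+\epsilon_k) = \epsilon_k + O(\epsilon_k^2)$ expansion, which is $O\!\left(\frac{(\log k)^2}{k^2}\right)$, is indeed lower order. A clean way to do this rigorously is to establish the two-sided bounds $\frac{\log k}{k} \le \epsilon_k \le \frac{2\log k}{k}$ for all large $k$ first (by plugging these trial values into the monotone function $f(\epsilon) = \epsilon(1+\epsilon)^{k-1} - 1$ and checking signs), then feed these bounds back through $\log \epsilon_k = -(k-1)\log(1+\epsilon_k)$ to pin down the next-order term. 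Everything else is routine.
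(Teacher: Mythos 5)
Your overall strategy for part (3) is essentially the paper's: both arguments extract the asymptotics of $\beta_k$ by elementary analysis of the equation $\lambda^k-\lambda^{k-1}-1=0$, using monotonicity of the relevant function on $\lambda>1$ to bracket the root and then taking $\log_3$. The paper does this by substituting trial values $\lambda=1+\frac{y}{k}$ directly into $p_k(\lambda)$ and checking signs, reducing the lower bound to the inequality $ye^y\le k$ with the choice $y=\log k-2\log\log k$, and checking $p_k\bigl(1+\frac{\log k}{k}\bigr)>0$ for the upper bound; your Lambert-$W$-style bootstrap of $\epsilon_k=(1+\epsilon_k)^{-(k-1)}$ is a mild repackaging of the same computation.

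However, the concrete bracketing you propose for the rigorous version is wrong and would fail at the sign check. You claim $\frac{\log k}{k}\le\epsilon_k\le\frac{2\log k}{k}$ for large $k$, but in fact $\epsilon_k<\frac{\log k}{k}$: with $f(\epsilon)=\epsilon(1+\epsilon)^{k-1}-1$ one has $f\bigl(\frac{\log k}{k}\bigr)\approx\log k-1>0$ for $k\ge 3$, so $\frac{\log k}{k}$ is an \emph{upper} bound for $\epsilon_k$, not a lower one. The correct bracket (the one the paper proves) is $\frac{\log k-2\log\log k}{k}\le\epsilon_k\le\frac{\log k}{k}$, and correspondingly your refined expansion should read $\epsilon_k=\frac{\log k}{k}-\frac{\log\log k}{k}+O\bigl(\frac{1}{k}\bigr)$ with a minus sign (since $(k-1)\log(1+\epsilon_k)=\log(1/\epsilon_k)\approx\log k-\log\log k$). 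Neither error survives into your final $O\bigl(\frac{\log\log k}{k}\bigr)$ statement, which is sign-agnostic and correct, but as written the "clean rigorous" implementation you sketch cannot be carried out: you must flip the bracket before feeding it back through the logarithmic relation. Also note you address only part (3); parts (1) and (2), which you invoke, require the separate structural analysis of the presentation graph (a self-loop at $v_0$ plus a $k$-cycle) and the determinant computation giving $p_k(\lambda)=\lambda^k-\lambda^{k-1}-1$.
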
  

The Hausdorff dimension of the set $\dim_{H}(\mathcal{C}(1,L_k))$ is positive but approaches
$0$ as $k \to \infty$. This result is proved in Section \ref{sec42}.

Secondly, we consider the family  $N_k= 3^{k} + 1 = (10^{k-1}1)_3$.
Our main results concern this family.


\begin{thm}\label{th14} {\rm (Infinite Family $N_k=3^k+1$)}

(1) Let $N_k=3^k+1= (10^{k-1}1)_3$. The path set presentation $(\sG, v_0)$ for 
the path set $X(1, N_k)$ underlying $\mathcal{C}(1, N_k)$
 has exactly $2^k$ vertices and is strongly connected.

(2) For every integer $k \geq 1$, there holds
\[
\dim_H(\mathcal{C}(1,N_k)) =  \dim_{H} \mathcal{C}(1, (10^{k-1}1)_3)
= \log_3\bigg(\frac{1 + \sqrt{5}}{2}\bigg) \approx 0.438018.
\]
\end{thm}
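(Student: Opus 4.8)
The plan is to establish part (1) by directly constructing the right-resolving presentation $(\sG, v_0)$ of the path set $X(1,N_k)$ using the algorithm of Theorem \ref{th12}, and then to read off the adjacency matrix and compute its Perron eigenvalue for part (2). First I would recall how the algorithm of Section \ref{sec3} encodes membership in $\mathcal{C}(1,N_k)$: a $3$-adic integer $\lambda = (\cdots a_2 a_1 a_0)_3$ with all $a_i \in \{0,1\}$ lies in $\frac{1}{N_k}\Sigma_3$ precisely when the product $N_k \lambda = (3^k+1)\lambda = 3^k \lambda + \lambda$ also has all ternary digits in $\{0,1\}$. Writing this addition digitwise, the digit of $3^k\lambda + \lambda$ in position $j$ is $a_j + a_{j-k} + (\text{carry})$, so the state one must track while reading the digits $a_0, a_1, a_2, \dots$ consists of (a) the carry bit entering the current position, and (b) the window of the last $k$ digits $(a_{j-1}, \dots, a_{j-k})$ needed to know $a_{j-k}$ when it is added in. However, the key simplification — and this is where the bound $2^k$ rather than $2^{k+1}$ comes from — is that the carry is not independent: because at each step we require the output digit $a_j + a_{j-k} + c_j \in \{0,1\}$ and the input digit $a_j \in\{0,1\}$, the admissible transitions force the carry to be a determined function of the recent digit history, so the live state set collapses to the $2^k$ possible windows. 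I would make this precise by exhibiting the vertex set as $\{0,1\}^k$ (windows of $k$ consecutive input digits, most recent first), with an edge from $w = (b_1,\dots,b_k)$ on input digit $a$ to $w' = (a, b_1, \dots, b_{k-1})$ whenever the resulting output digit $a + b_k + c \in \{0,1\}$ for the forced carry $c$; the marked vertex $v_0$ is the all-zeros window with zero initial carry. Checking that every vertex is reachable and that the presentation is right-resolving (at most one edge per input digit from each vertex with a given label) is then a finite verification built into the construction.

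For strong connectedness, I would argue that from any window $w$ one can return to the all-zeros window $v_0$: feeding in a long run of $0$'s eventually flushes the window to all zeros, provided the carry dies out, which it does because adding $0 + b_k + c$ with $b_k, c \in \{0,1\}$ and requiring the sum in $\{0,1\}$ forces $c$ to propagate to $0$ within $k$ steps. Conversely $v_0$ reaches every window by feeding in the desired digit string. Combining reachability-from-$v_0$ with return-to-$v_0$ yields strong connectedness of $\sG$, establishing part (1).

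For part (2), having the explicit graph on $2^k$ vertices, I would identify the adjacency matrix $A_k$ and show that its Perron eigenvalue $\beta_k$ is independent of $k$ and equals the golden ratio $\varphi = \frac{1+\sqrt5}{2}$. The cleanest route is not to compute the $2^k \times 2^k$ characteristic polynomial directly, but to exhibit a "folding" of the graph: find a partition of the vertex set (or a surjective graph homomorphism) onto a small fixed graph — I expect a $2$-vertex or $3$-vertex graph whose adjacency matrix has Perron eigenvalue $\varphi$, i.e. the companion-type matrix of $\lambda^2 - \lambda - 1$ — such that every vertex in a given block has the same number of out-edges to each block. Such an equitable partition forces the Perron eigenvalue of $A_k$ to coincide with that of the quotient matrix, which is the fixed small matrix. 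Concretely, I would guess the relevant invariant of a window is something like "the position of the most recent $1$" or "the parity pattern near the tail," and I would verify the equitable-partition (consistency) condition combinatorially. Alternatively, one can bound $\beta_k$ from above and below by $\varphi$: the lower bound by finding a $\varphi$-subshift inside $X(1,N_k)$ (a sub-presentation whose adjacency matrix is the golden-mean shift), and the upper bound by a counting argument showing the number of admissible prefixes of length $n$ grows like $\varphi^n$, using the fact that once a digit $a_j = 1$ is placed the constraint $a_j + a_{j-k} + c_j \le 1$ forbids certain nearby configurations in a way that mimics the "no two consecutive $1$'s" rule of the golden-mean shift, shifted by $k$. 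Then $\dim_H = \log_3 \beta_k = \log_3 \varphi$ follows from Theorem \ref{th12}(2).

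The main obstacle I anticipate is part (2): proving that the Perron eigenvalue is \emph{exactly} $\varphi$ and \emph{uniformly} in $k$, rather than merely bounded near it. The graph genuinely has $2^k$ vertices, so brute-force eigenvalue computation is hopeless; success hinges on finding the right equitable partition (or the right embedded golden-mean subshift together with a matching upper bound). Identifying that structure requires understanding precisely how the carry propagates through the length-$k$ window and why the $k$-step delay between adding $a_j$ and $a_{j-k}$ does not change the growth rate — essentially, that the shift-by-$k$ in the recurrence $a_j + a_{j-k} + c_j \in \{0,1\}$ still produces golden-mean-type growth. Once the combinatorial heart of that argument is pinned down, the rest is routine bookkeeping with the algorithm of Section \ref{sec3} and the general dimension formula.
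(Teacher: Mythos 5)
Your treatment of part (1) is essentially the paper's argument: the live state is exactly the window of the last $k$ input digits (in the paper this appears as the carry-integer $N$ with $N' = \lfloor (N+N_k a)/3\rfloor$, whose ternary expansion is that window), there are $2^k$ admissible windows, reachability from $v_0$ uses the fact that for the first $k$ steps the exiting digit is $0$ so both inputs are available, and return to $v_0$ is by feeding $0$'s. One simplification you should make explicit: for this family the carry is not merely "determined by the history" — it is identically zero, since the output digit $a_j + a_{j-k}$ is required to lie in $\{0,1\}$ at every step, so no carry is ever created. With that, part (1) is complete and matches Proposition \ref{pr45}.

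Part (2) has a genuine gap: everything hinges on identifying the invariant that certifies $\beta_k=\varphi$ uniformly in $k$, and your plan leaves that as a guess — and the guesses offered (``position of the most recent $1$,'' ``parity pattern near the tail'') are not the right ones. The correct statistic is the \emph{number of zeros} in the window: the paper (Theorem \ref{th34}) exhibits the strictly positive vector $\bv_k$ with entry $\varphi^{z(w)}$ at the window $w$, where $z(w)=\#\{j: b_j=0\}$ (built recursively by prepending $\varphi\,\bv_{k-1}$ to $\bv_{k-1}$), verifies $\mathbf{A}_k\bv_k^T=\varphi\,\bv_k^T$ by a block-matrix induction, and concludes by Perron--Frobenius together with the irreducibility from part (1). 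Note also that your proposed mechanism, an equitable partition, does not literally apply here: two windows with the same zero-count but different exiting digit $b_0$ have different out-neighbor profiles (one successor raising $z$ by $1$ when $b_0=1$, versus two successors with zero-counts $z$ and $z-1$ when $b_0=0$), so the level sets of $z$ are not an equitable partition. The eigenvector still works only because $\varphi^{z+1}=\varphi^{z}+\varphi^{z-1}$ makes the two weighted row sums agree; that identity, not equitability, is the combinatorial heart you flagged as missing. Your fallback (embedded golden-mean subshift for the lower bound plus a counting upper bound) could be made to work but the upper bound is not routine, whereas the explicit eigenvector settles both directions at once.
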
  

Here the Hausdorff dimension is constant as $k \to \infty$.
Theorem \ref{th14}  is a direct consequence of  results 
established in Section \ref{sec43} (Theorem \ref{th34} and Proposition \ref{pr45}).

We also include  results on multiple intersections of sets in the two infinite families above
in Section \ref{sec45}.
It is easy to see that for  each infinite family above, the Hausdorff dimensions of
arbitrarily large intersections are always positive. We give some
lower bounds on the dimension; Theorem \ref{th413}
gives multiple intersections  that establish $\Gamma_{\star} \ge \frac{1}{2} \log_3 2.$

In a sequel  \cite{ABL13} 
we analyze a third infinite family $P_k= (20^{k-1}1)_3 = 2 \cdot 3^k +1$,
whose underlying path set graphs  exhibit much more complicated behavior;
they have an 
unbounded number of strongly connected components as $k \to \infty$.

%
%
%

\subsection{Hausdorff dimension results: exceptional sets}\label{sec14a}

In addition we  are able to combine graphs in the infinite family $\mathcal{C}(1, N_k)$ in
such a way to get $\mathcal{C}(1, M_1, M_2, ..., M_n)$ with distinct $M_k \equiv 1 ~(\bmod \, 3)$
which have Hausdorff dimension further bounded away from zero.

In Section \ref{sec51} we establish the following lower bound on the Hausdorff dimension of the generalized
exceptional set.  We are indebted to A. Bolshakov for observing this result, which improves on 
Theorem \ref{th413}.


\begin{thm}\label{th110} 
The generalized exceptional set $\mathcal{E}_{\star}$ satisfies
$$
\dim_{H} ( \mathcal{E}_{\star}) \ge \frac{1}{2} \log_3 2 \approx 0.315464.
$$
In fact,
$$
\dim_{H} ( \{ \lambda \in \Sigma_{3, \bar{2}}: \,   N_{2k+1} \lambda \in \Sigma_{3, \bar{2}} \,\,\mbox{for all}\, k \ge 1\})
\ge \frac{1}{2} \log_3 2.
$$
\end{thm}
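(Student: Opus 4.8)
The plan is to exhibit a subset of $\{\lambda\in\Sigma_{3,\bar 2}:N_{2k+1}\lambda\in\Sigma_{3,\bar 2}\text{ for all }k\ge 1\}$ whose Hausdorff dimension is at least $\tfrac12\log_3 2$, by controlling the $3$-adic digits of $\lambda$ directly. Write $(\lambda)_3=(\cdots a_2a_1a_0)_3$ with $a_i\in\{0,1\}$ (so $\lambda\in\Sigma_{3,\bar 2}$). Since $N_{2k+1}=3^{2k+1}+1$, we have $N_{2k+1}\lambda=\lambda+3^{2k+1}\lambda$, so the digit of $N_{2k+1}\lambda$ in position $j$ is $a_j+a_{j-2k-1}$ plus a carry, where we set $a_i=0$ for $i<0$. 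The condition ``$N_{2k+1}\lambda$ omits the digit $2$'' is a constraint coupling $a_j$ and $a_{j-2k-1}$ together with carry propagation. The key simplification: if we force $\lambda$ to have \emph{all even-indexed digits equal to $0$}, i.e.\ $a_{2i}=0$ for all $i\ge 0$, then for any odd shift $2k+1$, the product $3^{2k+1}\lambda$ has all \emph{even}-indexed digits equal to $0$ as well (odd plus odd is even), while $\lambda$ itself has all odd-indexed digits free and all even-indexed digits $0$. Adding $\lambda+3^{2k+1}\lambda$: in each position the two summand-digits come from complementary parities of the original digit string, so at most one of them can be nonzero in any given position \emph{before} carries — but we must check carries do not create a $2$.

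So the first step is to analyze the addition $\lambda+3^{2k+1}\lambda$ under the restriction $a_{2i}=0$. Both summands lie in the sub-Cantor set of integers using digit $1$ only in odd positions; such numbers, when added, can only produce a carry if both have a $1$ in the same position, which is impossible by the parity-complementarity just noted (positions of $\lambda$'s nonzero digits are odd; positions of $3^{2k+1}\lambda$'s nonzero digits are also odd, but $3^{2k+1}\lambda$ is $\lambda$ shifted by an even amount, so its nonzero digits sit in the same odd positions as $\lambda$'s). Hmm — this means carries \emph{can} occur. I need to be more careful: I would instead restrict further, forcing $\lambda$ to use digit $1$ only in positions $\equiv 1\pmod 2$ \emph{and} additionally impose a spacing condition, or — cleaner — restrict to $\lambda$ supported on positions in a fixed residue class mod something large enough that no two odd shifts $3,5,7,\dots$ cause overlap. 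The honest approach: take $\lambda$ with free digits $a_i\in\{0,1\}$ only for $i$ in a carefully chosen infinite set $S$ of density $\tfrac12$, all other digits $0$, such that $S$ and every shift $S+(2k+1)$ interact benignly.

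The concrete choice I would try: let $\lambda$ have $a_i$ free for $i$ odd and $a_i=0$ for $i$ even, but additionally require that the $1$'s in $\lambda$ are ``isolated'' — no two consecutive nonzero digits — which costs nothing in dimension beyond a factor absorbed into the $\tfrac12$, or rather I would directly show that the set of $\lambda$ supported on odd positions with digits in $\{0,1\}$ already satisfies the condition, by proving inductively (on position $j$, processing digits from low to high) that the carry in the addition $\lambda+3^{2k+1}\lambda$ is always $0$ or $1$ and that a $2$ never appears. Specifically, for a number $\mu=\sum b_i 3^i$ with $b_i\in\{0,1\}$ supported on odd $i$, adding $\mu$ to its shift $3^{2k+1}\mu$ (also supported on odd positions): at an even position the digit is carry-in, which I claim stays $\le 1$; at an odd position the digit is $b_i+b_{i-2k-1}+(\text{carry})\in\{0,1,2,3\}$, and if it is $\ge 2$ a carry propagates into the next (even) position. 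The induction must show a $2$ is never the \emph{final} digit. This is where I would need either a cleverer support set or an extra constraint — and this carry-control is the main obstacle.

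Once the right support set $S\subseteq\NN$ with $\overline{\dim}$ (natural density) $=\tfrac12$ is pinned down and shown to force $N_{2k+1}\lambda\in\Sigma_{3,\bar 2}$ for all $k\ge 1$ simultaneously, the dimension computation is routine: the set of such $\lambda$ is (the $3$-adic realization of) a subshift — in fact a path set by the machinery of Section~\ref{sec2} and Theorem~\ref{th12}, or even more simply a product-type Cantor set if $S$ is eventually periodic — with topological entropy $\tfrac12\log 2$ base $e$, hence Hausdorff dimension $\log_3\beta$ with $\log\beta=\tfrac12\log 2$, i.e.\ $\tfrac12\log_3 2$. Concretely: if $S$ is the set of odd positions, the set of admissible $\lambda$ is a full $2$-shift on half the coordinates, a Cantor set of Hausdorff dimension exactly $\tfrac12\log_3 2$ by the standard mass-distribution / covering argument (cf.\ the computation $\dim_H(\Sigma_{3,\bar 2})=\log_3 2$), and the lower bound $\dim_H(\mathcal{E}_\star)\ge\tfrac12\log_3 2$ follows from the stated inclusion $\mathcal{E}_1(\ZZ_3)\subset\mathcal{E}_\star(\ZZ_3)$ together with monotonicity of Hausdorff dimension under inclusion. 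The work is entirely in steps two and three, the carry analysis; the final dimension bookkeeping is immediate from the already-cited results.
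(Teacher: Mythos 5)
Your construction is essentially the one the paper uses (the paper's Theorem \ref{th51a} takes $Y$ to be the set of $\lambda$ supported on \emph{even} digit positions; your first instinct, support on odd positions, works just as well by symmetry), but you talk yourself out of it with a parity error and consequently leave the central step unresolved. You write that ``$3^{2k+1}\lambda$ is $\lambda$ shifted by an even amount, so its nonzero digits sit in the same odd positions as $\lambda$'s'' --- but $2k+1$ is \emph{odd}, so if $\lambda$ is supported on odd positions then $3^{2k+1}\lambda$ is supported on positions of the form $\text{odd}+\text{odd}=\text{even}$. The digit supports of the two summands in $\lambda+3^{2k+1}\lambda$ are therefore disjoint, every digit of the sum is $0$ or $1$, and there are no carries at all. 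There is no ``main obstacle'': the carry analysis you defer to ``a cleverer support set or an extra constraint'' is already complete once the parity is computed correctly, and this is exactly how the paper argues (writing $\lambda=\sum_j b_{2j}3^{2j}$ and observing $N_{2k+1}\lambda=\sum_j b_{2j}3^{2j}+\sum_j b_{2j}3^{2j+2k+1}$ has all exponents distinct). As written, your proposal does not constitute a proof, because the one nontrivial claim --- that some density-$\tfrac12$ support set forces $N_{2k+1}\lambda\in\Sigma_{3,\bar2}$ for all $k$ --- is explicitly left open.

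Two smaller points. First, your closing appeal to the inclusion $\mathcal{E}_1(\ZZ_3)\subset\mathcal{E}_\star(\ZZ_3)$ is not the right justification for $\dim_H(\mathcal{E}_\star)\ge\tfrac12\log_3 2$; the set $Y$ you build has no a priori relation to $\mathcal{E}_1$. What you need is the direct inclusion $Y\subseteq\mathcal{E}_\star(\ZZ_3)$, which holds because each $\lambda\in Y$ lies in $\Sigma_{3,\bar2}$ and satisfies $N_{2k+1}\lambda\in\Sigma_{3,\bar2}$ for the infinitely many integers $N_{2k+1}\equiv 1\pmod 3$. Second, the dimension bookkeeping at the end ($Y$ is a path set with entropy $\tfrac12\log 2$, hence $\dim_H Y=\tfrac12\log_3 2$ via Proposition \ref{pr22a}) is correct and matches the paper.
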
  

This result is an immediate corollary of Theorem \ref{th51a}.
The proof strongly uses the fact that the integers $N_{2k+1}$ have only two nonzero
$3$-adic digits.

In Section \ref{sec53} we  give numerical improvements  on  the lower bounds in \cite{Lag09} for small $k$ 
for  the Hausdorff dimension of the enclosing sets  $\mathcal{E}^{(k)}(\mathbb{Z}_3)$
that upper bound that of the exceptional set  $\mathcal{E}(\mathbb{Z}_3)$. These improvements come 
 via  explicit examples.

%
%
%

\subsection{Extensions of Results }\label{sec15}

The results of this paper show that the Generalized Exceptional Set $\mathcal{E}_{*}(\ZZ_3)$
has positive Hausdorff dimension. 
Theorem \ref{th110} shows  
that to make further progress on the Exceptional Set Conjecture one cannot relax
the problem to consider general integers $M$; it will be necessary to consider
a smaller class on integers that 
have some special properties in common with  the integers $2^k$.

In a sequel  \cite{ABL13} we investigate another approach towards 
the Exceptional Set Conjecture. 
Let $n_3(M)$ denote the number of nonzero $3$ digits of $M$. 
It asks whether the $\dim_{H} \mathcal{C}(1, M)$ 
necessarily decreases
to $0$ as $n_3(M) \to \infty$. 
It is a  known fact  that the number of nonzero ternary digits in $(2^n)_3$ goes to infinity
as $n \to \infty$, i.e. for each $k \ge 2$ there are only finitely many $n$ with $(2^n)_3$ having
at most $k$ nonzero ternary digits. 
This result was first established in 1971 by Senge and Straus, see  \cite{SS71},
and a quantitative version of this assertion  follows from results of 
C. L. Stewart  \cite[Theorem 1]{St80}. 
It follows that if it  were true that  $\dim_{H} \mathcal{C}(1, M) \to 0$ as $n_3(M) \to \infty$,
then the Exceptional Set Conjecture would follow.

 This paper and its sequel \cite{ABL13} study   the Hausdorff dimension 
of these sets  in the special case of multiplicative translates
of $3$-adic Cantor sets, but one may also consider
 many more complicated path set fractals in the sense of \cite{AL12b}
 in place of the Cantor set. The algorithmic methods of this paper apply to $p$-adic numbers for
 any prime $p$ and to the $g$-adic numbers considered by Mahler \cite{Mah61} for any integer $g \ge 2$.
 
%
%
%

\subsection{Overview}\label{sec16}

Section \ref{sec2} reviews properties of  
$p$-adic path sets and their symbolic dynamics, drawing on \cite{AL12a} and \cite{AL12b}.
The general framework of these papers includes intersections of multiplicative translates of $3$-adic 
Cantor sets as a special case. Section \ref{sec2} also states
a formula for computing the Hausdorff dimension of such sets.
Section \ref{sec3}  of this paper  gives algorithmic constructions and
proves Theorem  \ref{th12}. It also  presents examples. 
Section \ref{sec4}  studies two infinite families of intersections of $3$-adic Cantor sets
 and proves Theorems ~\ref{th17a} and \ref{th14}.
Section \ref{sec5} gives  applications, which include the  lower bound on the Hausdorff dimension of 
the generalized exceptional set $\mathcal{E}_\star(\mathbb{Z}_3)$ and lower bounds on
$\dim_{H}(\mathcal{E}^{(k)}(\mathbb{Z}_3))$ for small $k$.

%
%
%

\subsection{Notation}
The notation $(m)_3$ means either the base $3$ expansion of the positive integer $m$, or else
the $3$-adic expansion of $(m)_3$. In the $3$-adic case this expansion is to be read right to left,
so that it is compatible with the ternary expansion. That is, $\alpha = \sum_{j=0}^{\infty} a_j 3^j$
would be written $( \cdots a_2 a_1 a_0)_3$.\medskip

%
%
%

\section{Symbolic Dynamics and Graph-Directed Constructions} \label{sec2}

%
%
%

\subsection{Symbolic Dynamics, Graphs and Finite Automata } \label{sec21}

The constructions of this paper are based on 
the fact that the points in  intersections of multiplicative translates of $3$-adic Cantor sets
have $3$-adic expansions that are describable in terms of allowable paths generated by
finite directed labeled graphs . 
We  use  symbolic dynamics on certain closed subsets of the one-sided shift space $\Sigma=\sA^{\NN}$ with fixed 
symbol alphabet
$\sA$, which for our application will be specialized to $\sA=\{0,1,2\}$.
A basic reference for directed graphs and symbolic dynamics, 
which we follow, is  Lind and Marcus \cite{LM95}.

By a {\em graph} we mean a finite directed graph, allowing loops and multiple edges. A {\em labeled graph}
is a graph assigning labels  to each directed edge; these labels are drawn from a finite symbol alphabet.
A labeled directed graph can be interpreted as a {\em  finite automaton}
in the sense of automata theory.
In our applications to $3$-adic digit sets, the labels are drawn from the alphabet $\sA= \{ 0, 1, 2\}.$ In a directed
graph, a vertex is a {\em source} if all directed edges touching that vertex are outgoing; it is a {\em sink} if all
directed edges touching that edge are incoming. A vertex  is {\em essential} if it is neither a source nor a sink,
and is called {\em stranded} otherwise. A graph is \emph{essential} if all of its vertices are essential. 
 A  graph $G$ is
  {\em strongly connected} if for each two vertices $i, j$ there is a directed path from $i$ to $j$.
We let $SC(G)$ denote the set of strongly connected component subgraphs of $G$.

We use some basic facts from Perron-Frobenius theory of nonnegative matrices.
The {\em Perron eigenvalue} (\cite[Definition 4.4.2]{LM95}) of a  nonnegative  real matrix $\mathbf{A} \neq 0$ is
the largest real  eigenvalue $\beta \ge 0$ of $\mathbf{A}$.  
A nonnegative matrix is {\em irreducible} if for each row and column $(i, j)$ some power ${\bf A}^m$
has $(i,j)$-th entry nonzero.  A nonnegative matrix ${\bf A}$ is {\em primitive}  if some power ${\bf A}^k$
for an integer $k \ge 1$ has all entries positive; primitivity implies irreducibility but not vice versa.
The {\em Perron-Frobenius theorem,} \cite[Theorem 4.2.3]{LM95} for
an irreducible nonnegative matrix ${\bf A}$ states that:
\begin{enumerate}
\item
The Perron eigenvalue $\beta$ is geometrically
and algebraically simple, and has
 an everywhere positive eigenvector ${\bf v}.$
 \item
 All other eigenvalues $\mu$ have $|\mu| \le \beta$, so that $\beta = \sigma({\bf A})$,
 the spectral radius of ${\bf A}$.
 \item 
 Any other everywhere positive eigenvector must 
 be  a positive mulitiple of ${\bf v}$.
 \end{enumerate}
 For a general nonnegative 
real matrix $\mathbf{A} \neq 0$, the Perron eigenvalue need not be simple, but it
still equals the spectral radius $\sigma(\bf{A})$ and it has at least one  everywhere nonnegative eigenvector.

We apply this theory to adjacency matrices of graphs. 
A (vertex-vertex) {\em adjacency matrix} ${\bf A} ={\bf A}_{G}$ of the directed graph  $G$ has
entry $a_{ij}$ counting the number of directed edges from vertex $i$ to vertex $j$.
The adjacency matrix is irreducible if and only if the associated graph is strongly connected,
and we also call the graph {\em irreducible} in this case.
  Here primitivity of the adjacency matrix of 
a directed  graph $G$ is equivalent to the graph being strongly connected
and aperiodic, i.e.  the greatest common divisor of its (directed) cycle lengths is  $1$. 
For an adjacency matrix of a graph containing at least
at least one directed cycle,  its Perron eigenvalue is necessarily a real algebraic integer $\beta \ge 1$
(see Lind \cite{Lin84} for a characterization of these numbers).

%
%
%

\subsection{$p$-Adic path sets, sofic shifts  and $p$-adic path set fractals} \label{sec21b}

Our basic objects are special cases of the following definition. 
A {\em pointed graph} is a pair $(\sG, v)$ consisting of a directed labeled graph $\sG =(G, \mathcal{E})$
and a  marked vertex $v$ of $\sG$. Here $G$ is a (directed) graph and $\mathcal{E}$ is 
an assignment of labels $(e, \ell)= (v_1, v_2, \ell)$ to the edges of $G$, where every edge gets a unique label,
and no two triples are the same (but multiple edges and loops are permitted otherwise).

\begin{defn} \label{de211}
 Given a pointed graph $(\sG, v)$ its associated
 \emph{path set} 
   $\sP = X_\mathcal{G}(v) \subset \sA^{\NN}$ is
the set of  all infinite one-sided symbol sequences $(x_0, x_1, x_2, ...) \in \sA^{\NN}$, 
giving the successive labels of all one-sided infinite walks in $\mathcal{G}$ 
issuing from the distinguished vertex $v$. 
Many different $(\mathcal{G}, v)$ may give the same path set $\sP$, and we call any such
$(\mathcal{G},v)$  a \emph{presentation} of $\sP$. 
\end{defn}

An important class of presentations have the following extra property.
We  say that a directed labeled graph $\sG =(G, v)$ is {\em right-resolving}
if for each vertex of $\sG$ all directed edges outward have
distinct labels. (In automata theory $\sG$ is called a {\em deterministic
automaton}.) One can show that every path set has a right-resolving presentation.

Note that the labeled  graph $\sG$ without a marked vertex determines
a {\em one-sided sofic shift} in the sense of symbolic dynamics, as defined in \cite{AL12a}. 
This sofic shift  comprises
the set union of the path sets at all vertices of $\sG$.
Path sets are closed sets in the shift topology,
but are in general non-invariant under the one-sided shift operator.
Those path sets $\sP$  that are invariant are exactly the one-sided sofic shifts \cite[Theorem 1.4]{AL12a}. 

We study the path set concept in symbolic dynamics in \cite{AL12a}.
 The collection of path sets $X:= X_{(\sG, v_0)}$ in a given
alphabet is closed under finite union and intersection (\cite{AL12a}). 
The symbolic dynamics analogue of Hausdorff dimension is topological entropy. 
The {\em topological entropy} of a path set $H_{top} (X)$ is given by
$$
H_{top}(X) := \limsup_{n \to \infty} \frac{1}{n} \log N_n(X),
$$
where $N_n(X)$ counts the number of distinct blocks of symbols of lengh $n$
appearing in elements of $X$. 
The topological entropy is easy to compute for right-resolving presentation.
By \cite[Theorem 1.13]{AL12a}, it is 
 \begin{equation} \label{top-entropy}
 H_{top}(X) = \log \beta
 \end{equation}
 where 
 $\beta$ is the Perron eigenvalue of the adjacency matrix ${\bf A}={\bf A}_G$ of 
 the underlying directed graph $G$ of $\sG$, e.g. the spectral radius of ${\bf A}$.

%
%
%

\subsection{$p$-Adic Symbolic Dynamics and Graph Directed Constructions}\label{sec23a}

We now suppose $\sA = \{0, 1,2, ..., p-1\}$.
We can view the elements  of 
a path set $X$ on this alphabet  geometrically as describing the digits in the
$p$-adic expansion of a $p$-adic integer.  This is done using a map
$\phi: \sA^{\NN} \to \ZZ_p$.
from symbol sequences into $\ZZ_p$.
We call the resulting image set $K = \phi(X)$ a \emph{$p$-adic path set fractal}.
Such sets are studied in \cite{AL12b}, where they are related to
graph-directed fractal constructions.
The class of  $p$-adic path set fractals  is closed under  $p$-adic addition
and multiplication by rational numbers $r \in \QQ$ that lie in $\ZZ_p$ (\cite{AL12b}).

It is possible to compute the Hausdorff dimension of a 
$p$-adic path set fractal  directly from a suitable presentation 
of the underlying path set $X=X_{\sG}(v)$.
We will use the following result.
\begin{prop}\label{pr22a}
Let $p$ be a prime, and $K$ a set of $p$-adic integers whose allowable
$p$-adic expansions are described by the symbolic dynamics
of a $p$-adic path set  $X_K$ on symbols $\mathcal{A} =\{ 0, 1, 2, \cdots, p-1\}$.
Let $(\mathcal{G},v_0)$ be a presentation of
this path set that is right-resolving.

(1)  The map $\phi_p: \mathbb{Z}_p \rightarrow [0,1]$ taking 
$\alpha= \sum_{k=0}^{\infty}{a_k p^k} \in \mathbb{Z}_p$
to the real number with base $p$ expansion
 $\phi_p(\alpha) :=\sum_{k=0}^\infty \frac{a_k}{p^{k+1}}$
is a continuous map, and the image of $K$ under this map,  $K':= \phi_p(K) \subset [0,1]$, is a
graph-directed fractal in the sense of Mauldin-Williams.

 (2) The Hausdorff dimension of the $p$-adic path set fractal $K$ is 
\begin{equation}
\dim_{H}(K) = \dim_{H}(K') = \log_p  \beta,
\end{equation}
where $\beta$ is the spectral radius of the adjacency matrix ${\bf A}$ of $G$.
\end{prop}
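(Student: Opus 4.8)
The plan is to deduce part~(2) from the Mauldin--Williams dimension theorem for graph-directed self-similar constructions \cite{MW86}, once part~(1) has exhibited $K'=\phi_p(K)$ as such a construction, and then to transfer the dimension back to $K$ using elementary properties of $\phi_p$. For part~(1) I would first check continuity by a direct digit comparison: if $\alpha\equiv\beta\pmod{p^{n}}$ then $\alpha$ and $\beta$ share their first $n$ base-$p$ digits, so $|\phi_p(\alpha)-\phi_p(\beta)|\le\sum_{k\ge n}(p-1)p^{-(k+1)}=p^{-n}$; hence $|\phi_p(\alpha)-\phi_p(\beta)|\le|\alpha-\beta|_p$, so $\phi_p$ is $1$-Lipschitz and in particular continuous. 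Next, for each vertex $v$ of $\sG$ let $K_v\subset[0,1]$ be the set of reals whose base-$p$ digit sequence lies in $X_{\sG}(v)$, so that $K'=K_{v_0}$. Because the presentation is right-resolving, every one-sided infinite walk from $v$ is determined uniquely by its first edge together with an infinite walk issuing from the head of that edge; reading off digits, a real in $K_v$ whose leading digit is $a$ equals $\tfrac1p(a+t)$ with $t\in K_w$ for the corresponding edge $v\xrightarrow{a}w$. This gives the graph-directed system $K_v=\bigcup_{v\xrightarrow{a}w}f_a(K_w)$, where $f_a(t)=\tfrac1p(t+a)$ is a similarity of ratio $\tfrac1p$. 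The open set condition holds with $U_v=(0,1)$ for every vertex: the maps $f_0,\dots,f_{p-1}$ send $(0,1)$ to the pairwise disjoint intervals $\bigl(\tfrac ap,\tfrac{a+1}{p}\bigr)\subset(0,1)$, and the right-resolving property forces the out-edges of any fixed vertex to carry distinct labels, so the maps attached to those edges are distinct among the $f_a$. Hence $\{K_v\}$ is the attractor of a Mauldin--Williams graph-directed iterated function system of similarities satisfying the open set condition, and $K'$ is the corresponding graph-directed fractal.

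For part~(2), the Mauldin--Williams theorem identifies $\dim_H(K_{v_0})$ with the unique exponent $s$ making the spectral radius of the matrix $M(s)$, whose $(i,j)$ entry is $\sum_{e\colon i\to j}(\text{ratio of }e)^{s}$, equal to $1$. Here every ratio is $\tfrac1p$, so $M(s)=p^{-s}\mathbf{A}$ with $\mathbf{A}$ the vertex adjacency matrix of $G$, and $\rho(M(s))=p^{-s}\rho(\mathbf{A})=p^{-s}\beta$; the equation $p^{-s}\beta=1$ has the single solution $s=\log_p\beta$. Here $\beta\ge1$ since $G$ contains a directed cycle (e.g. the loop at $v_0$ labelled $0$, reflecting $0\in K$). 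When the presentation is not strongly connected the theorem gives $\dim_H(K_{v_0})$ as the maximum of the local exponents $\log_p\rho(\mathbf{A}_C)$ over strongly connected components $C$ reachable from $v_0$; since all vertices are reachable from $v_0$, this is a maximum over all of $SC(G)$, and $\max_{C}\rho(\mathbf{A}_C)=\rho(\mathbf{A})=\beta$ by the Frobenius normal form. Thus $\dim_H(K')=\log_p\beta$.

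It remains to show $\dim_H(K)=\dim_H(K')$. The $1$-Lipschitz bound gives $\dim_H(K')\le\dim_H(K)$ for free, and the reverse inequality is the only delicate point: $\phi_p^{-1}$ is \emph{not} Lipschitz, since two $p$-adic integers on opposite sides of a digit carry can have nearly equal images yet $p$-adic distance $1$. I would handle this with a bounded-overlap estimate. The map $\phi_p$ carries the level-$n$ cylinders of $\mathbb{Z}_p$ (each of $p$-adic diameter $p^{-n}$) bijectively onto the intervals $I_j=[jp^{-n},(j+1)p^{-n}]\subset[0,1]$, $0\le j<p^n$, and an interval of length $2r$ with $p^{-n-1}\le r<p^{-n}$ meets at most three of the $I_j$. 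Hence the $\phi_p$-preimage of any ball in $[0,1]$ of radius $r\in[p^{-n-1},p^{-n})$ is contained in a union of at most three cylinders, each of $p$-adic diameter $p^{-n}\le pr$. Pulling back an economical cover of $K'$ by small balls then yields a cover of $K$ showing $\mathcal{H}^{s}(K)\le 3p^{s}\,\mathcal{H}^{s}(K')$ for every $s\ge0$, so $\mathcal{H}^{s}(K')=0$ forces $\mathcal{H}^{s}(K)=0$ and $\dim_H(K)\le\dim_H(K')$. (Alternatively, and more directly, one can note that $K$ is itself a $p$-adic graph-directed fractal in the sense of \cite{AL12b}: it is the attractor of the $p$-adic similarities $x\mapsto px+a$ of ratio $\tfrac1p$ with incidence matrix $\mathbf{A}$, and these even satisfy strong separation since the images $a+p\mathbb{Z}_p$ are disjoint cosets, so the $p$-adic analogue of the Mauldin--Williams formula established in \cite{AL12b} gives $\dim_H(K)=\log_p\beta$ outright.) Combining the three steps proves $\dim_H(K)=\dim_H(K')=\log_p\beta$. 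The main obstacle is exactly this last transfer of Hausdorff dimension across $\phi_p$, where the failure of the inverse to be Lipschitz must be absorbed by the fact that $\phi_p$ only collapses a bounded number of adjacent cylinders at each scale.
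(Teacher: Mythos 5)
Your argument is correct, but note that the paper does not actually prove this proposition in situ: it simply cites \cite[Section 2]{AL12b}, where the general theory of $p$-adic path set fractals is developed. What you have written is essentially a self-contained reconstruction of what that citation hides: the $1$-Lipschitz property of $\phi_p$, the realization of $K'=\phi_p(K)$ as the attractor of the graph-directed system $K_v=\bigcup_{v\xrightarrow{a}w}f_a(K_w)$ with $f_a(t)=\tfrac{1}{p}(t+a)$ satisfying the open set condition (this is exactly where right-resolving is used, to make the out-maps at each vertex distinct), the Mauldin--Williams spectral equation $\rho(p^{-s}\mathbf{A})=1$ giving $s=\log_p\beta$, and the transfer back to $K$ by the bounded-overlap covering estimate. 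That last step is the one genuinely delicate point and you handle it correctly: $\phi_p^{-1}$ fails to be Lipschitz across digit carries, but the preimage of a ball of radius $r\in[p^{-n-1},p^{-n})$ lies in a bounded number of level-$n$ cylinders of diameter $\le pr$, which is all one needs to push Hausdorff measure estimates through. Your parenthetical alternative --- treating $K$ itself as a $p$-adic graph-directed attractor with strong separation --- is in fact closer in spirit to the route taken in \cite{AL12b}.

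Two small caveats. First, your reduction of the non-strongly-connected case to $\max_C\rho(\mathbf{A}_C)=\rho(\mathbf{A})$ uses that every vertex of $\mathcal{G}$ is reachable from $v_0$; this is not stated in the hypotheses of the proposition but is guaranteed for all presentations the paper actually constructs (they are connected and essential), so you should state it as a standing assumption rather than derive it. Second, your justification that $\beta\ge1$ via ``the loop at $v_0$ labelled $0$'' assumes $0\in K$, which is likewise not hypothesized; the cleaner argument is that an essential finite graph necessarily contains a directed cycle, so $\rho(\mathbf{A})\ge1$ whenever $K$ is infinite. Neither point affects the validity of the proof in the setting where the proposition is applied.
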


\begin{proof}
These results are  proved in \cite[Section 2]{AL12b}.
\end{proof}

In this paper we treat the case $p=3$ with $\sA = \{ 0, 1, 2\}$.
The $3$-adic Cantor set is a $3$-adic path set fractal, so these
general properties above guarantee that the intersection of 
a finite number of multiplicative translates
of $3$-adic Cantor sets will itself be a $3$-adic path set fractal $K$,
generated from an underlying path set. 

To do calculations with such sets we will need algorithms
for converting presentations of a given $p$-adic path set to presentations
of new $p$-adic path sets derived by the operations above. 
The $p$-adic arithmetic operations are treated in \cite{AL12b}
and union and intersection are treated in \cite{AL12a}.

%
%
%

\section{Structure of Intersection Sets $\mathcal{C}(1,M_1, M_2, ..., M_n$)} \label{sec3}

We  show that the sets $C(1, M_1,\ldots , M_n)$ consist of those $3$-adic integers whose $3$-adic
expansions are describable as path sets $X(1, M_1, \cdots , M_n)$. We also present an algorithm which when given the data 
$(M_1, ...,M_n)$ as input produces as output a presentation $\sG = (G, v_0)$ of the path set $X(1,M_1,\ldots,M_n)$.

%
%
%

\subsection{Constructing a path set presentation  $X(1, M)$}\label{sec31} 
We describe an algorithmic procedure to obtain a  path set presentation $X(1, M)$ for the $3$-adic expansions of
elements in $\mathcal{C}(1, M)$. Since $\mathcal{C}(1, 3^j M) = \mathcal{C}(1, M)$, 
we may reduce to the case $M \not\equiv 0~(\bmod ~3)$ and since $\mathcal{C}(1, M) = \{0\}$
if $M \equiv 2 ~(\bmod\, 3)$ it suffices to consider the case $M \equiv 1 \, (\bmod \, 3)$.


\begin{thm}\label{th31n}
For $M \ge 1$, with $M \equiv 1 ~(\bmod \, 3)$, the set $\mathcal{C}(1, M) = \Sigma_3 \cap \frac{1}{M} \Sigma_3$ 
has $3$-adic expansions given by a path set $X(1, M)$ which 
has an algorithmically computable path set presentation  $(\mathcal{G}, v_0)$, in which the vertices $v_{m}$ 
are labeled with a subset of the integers $0 \le m \le \lfloor \frac{1}{2} M \rfloor$, always including $m=0$,
and of cardinality at most $\lfloor \frac{M}{2}\rfloor$.
This presentation is right-resolving, connected and
essential. 
\end{thm}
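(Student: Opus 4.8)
The plan is to construct the graph $\sG$ directly from the $3$-adic arithmetic that defines membership in $\mathcal{C}(1,M)$, reading the digits of $\lambda \in \mathbb{Z}_3$ one at a time from least significant to most significant. A point $\lambda$ lies in $\mathcal{C}(1,M)$ exactly when both $\lambda$ and $M\lambda$ have all $3$-adic digits in $\{0,1\}$. The key observation is that, having processed the low-order digits $a_0, a_1, \dots, a_{j-1}$ of $\lambda$, the only information about the past that is relevant to whether the remaining digits can be completed to a valid expansion is the \emph{carry} generated by the multiplication-by-$M$: writing $M\lambda = \sum b_\ell 3^\ell$, the partial sum $M(a_0 + a_1 3 + \cdots + a_{j-1}3^{j-1})$ equals (a number whose low digits $b_0,\dots,b_{j-1}$ are already determined) plus $c_j \cdot 3^j$ for some nonnegative integer carry $c_j$. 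So I would take the vertex set to be the set of carry values $m = c_j$ that are reachable, with $v_0 = 0$ the initial carry, and put a labeled edge $c \xrightarrow{\,a\,} c'$ for a digit $a \in \{0,1\}$ whenever the digit $b := (Ma + c) \bmod 3$ of $M\lambda$ lies in $\{0,1\}$ and the new carry is $c' := \lfloor (Ma+c)/3\rfloor$. The label set is $\{0,1\}$ (never $2$), since a step using digit $a=2$ would force $\lambda \notin \Sigma_3$; this is where the reduction to $M\equiv 1 \bmod 3$ and the exclusion of $a_0=2$ enter.

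With this definition the four asserted properties fall out. \emph{Right-resolving}: for a fixed carry $c$ and a fixed digit $a$, both the output digit $b$ and the successor carry $c'$ are uniquely determined, so distinct outgoing edges from a vertex carry distinct labels. \emph{Path set / correctness}: an infinite walk from $v_0=0$ with label sequence $(a_0,a_1,\dots)$ exists if and only if every intermediate output digit is in $\{0,1\}$, which by the carry bookkeeping is exactly the statement that $\lambda = \sum a_j 3^j$ and $M\lambda$ both lie in $\Sigma_3$; hence $X_{\sG}(v_0)$ is precisely the path set of $3$-adic expansions of $\mathcal{C}(1,M)$, and this is $X(1,M)$. \emph{Connected and essential}: every vertex is reachable from $v_0$ by construction (we only keep reachable carries); and each carry $c$ that occurs has at least one admissible outgoing digit — concretely $a=0$ gives output digit $c \bmod 3$ and one checks the surviving carries always admit a continuation, while the zero carry is hit infinitely often along the all-zero path, so no vertex is a source or a sink.

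The remaining point, and the main obstacle, is the \emph{bound on the number of vertices}: I must show every reachable carry satisfies $0 \le c \le \lfloor M/2\rfloor$, and that the total number of reachable carries (including $0$) is at most $\lfloor M/2 \rfloor$. For the range bound I would argue by induction: if $0 \le c \le \lfloor M/2\rfloor$ and $a \in \{0,1\}$, then $Ma + c \le M + \lfloor M/2\rfloor \le \tfrac{3}{2}M$, so $c' = \lfloor (Ma+c)/3\rfloor \le \tfrac{1}{2}M$, giving $c' \le \lfloor M/2\rfloor$; the base case $c=0$ is immediate. To sharpen "$\le \lfloor M/2\rfloor + 1$ carries" to "$\le \lfloor M/2\rfloor$ vertices" I would examine the boundary: one shows that certain carry values in the range $[0,\lfloor M/2\rfloor]$ are never reachable, or never admit an admissible edge (e.g. carries $c$ for which neither $c\bmod 3$ nor $(M+c)\bmod 3$ lies in $\{0,1\}$ are immediately discarded, and a short counting argument using $M \equiv 1\bmod 3$ removes at least one value), trimming the count to at most $\lfloor M/2\rfloor$. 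Making this last trimming argument airtight — identifying exactly which residues of $M$ mod small powers of $3$ force which carries to be absent — is the delicate part; everything else is routine bookkeeping with the carry recursion.
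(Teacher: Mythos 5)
Your construction is the same as the paper's: vertices are the reachable multiplication carries, the initial vertex is carry $0$, an edge labeled $a\in\{0,1\}$ from carry $N$ is admissible exactly when $(Ma+N)\bmod 3\in\{0,1\}$, the successor is $N'=\lfloor (Ma+N)/3\rfloor$, and the range bound $0\le N\le\lfloor M/2\rfloor$ is proved by the identical induction $N'\le (M+M/2)/3=M/2$. The right-resolving, connected, essential, and correctness arguments also match; one small imprecision is your claim that $a=0$ always furnishes an exit edge --- when $N\equiv 2\pmod 3$ the digit $a=0$ outputs a $2$ and is forbidden, and it is $a=1$ that works, precisely because $M\equiv 1\pmod 3$ forces $(M+N)\equiv 0\pmod 3$; the paper checks the three residue cases explicitly.

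The ``delicate trimming'' you correctly identify as the main obstacle --- getting from the $1+\lfloor M/2\rfloor$ integers in $[0,\lfloor M/2\rfloor]$ down to a cardinality of $\lfloor M/2\rfloor$ --- should not be attempted: it is an off-by-one slip in the theorem statement, not a step you are missing. The paper's own proof establishes only the range bound and hence only the count $1+\lfloor M/2\rfloor$, which is the bound actually quoted and used in Theorem \ref{th12} and Theorem \ref{th33n} (``at most $\prod_i(1+\lfloor \tfrac12 M_i\rfloor)$ vertices''). Moreover the sharper count is false: for $M=7$ the reachable carries are $0,2,3,1$ (this is exactly the four-vertex presentation of $\mathcal{C}(1,7)$ in Figure 3.2), giving $4>\lfloor 7/2\rfloor=3$; and for $M=1$ the claimed cardinality bound would be $0$ while $m=0$ is required to be a vertex. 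So your proof is complete once you replace the cardinality claim by $1+\lfloor M/2\rfloor$; no residue analysis of $M$ modulo powers of $3$ is needed.
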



\begin{proof}
The labeled graph $\mathcal{G} = (G, \mathcal{L})$ will have path labels drawn from $\{0, 1\}$
and the vertices $v_j$ of the underlying directed graph $G$ will be labeled by 
a subset of the integers $j$ satisfying
$0 \le N \le  M+1.$ 
The marked vertex $v_0$ corresponds to $N=0$
and is the starting vertex of the algorithm.

The idea is simple. Suppose that
$$
\alpha:= \sum_{j=0}^{\infty} a_j 3^j \in \Sigma_3 \cap \frac{1}{M} \Sigma_3.
$$
Here all $a_j \in \{0, 1\}$ and in addition 
$$
M \alpha = \sum_{j=0}^{\infty} b_j 3^j \in \Sigma_3.
$$
Suppose the first $n$ digits 
$$\alpha_n = \sum_{j=0}^{n-1} a_j 3^j,$$
are chosen.
Since $M \equiv 1 ~(\bmod \, 3)$ this uniquely specifies the first $n$ digits of
$$
M \alpha_n := \sum_{j=0}^{m+n-1} b_j^{(n)} 3^j,
$$
namely
$$
b_j^{(n)} = b_j \,\, \mbox{for} \quad 0 \le j \le n-1,
$$
which have $b_j \in \{0, 1\},$ for $0 \le j \le n-1.$
Here the  remaining digits $b_{n+k}^{(n)}$ for $1 \le k \le m$ are unrestricted, with
$$
 m =\lfloor  \log_3 M \rfloor +1.
$$
 We have followed a path in the graph G corresponding
to edges labeled $(a_0, a_1, ..., a_{n-1})$. The vertex we arrive at after these steps will be
labeled by the value of the ``carry-digit"  part of $\beta_n$, which is
$$
N= \sum_{j =n}^{m+n -1} b_j^{(n)} 3^{j-n}.
$$

The value of the bottom $3$-adic digit $b_n^{(n)}$ of $N$ will determine the allowable exit 
edges from vertex $v_N$, and the label of the vertices reached. The requirement
is that the next digit $a_{n}$ satisfy
\begin{equation}\label{allowable0}
a_n + b_n^{(n)}  \equiv  0, 1 (\bmod \, 3)
\end{equation}
If such a value is chosen, then we will be able to create a valid $\alpha_{n+1}$
and $\beta_{n+1} := M \alpha_{n+1}$ will have 
$$
b_{n}^{(n+1)} = a_n + b_n^{(n)} ~(\bmod \, 3).
$$
There always exists at least one exit edge from each reachable vertex $v_N$, 
since for $b_n^{(n)}=0$ the admissible $a_n =0, 1$; for $b_n^{(n)}=1$ the only admissible $a_n = 0$,
and for $b_n^{(n)} =2$ the only admissible $a_n=1$, in order that the next digits
$a_{n+1}, b_{n+1}$ both belong to $\{0, 1\}$.

The important  point  is that the  vertex label $N$ is all that must be remembered to decide on an
admissible  exit edge in the next step, since its bottom digit determines
the allowable exit edge values $a \subset \{0, 1\}$ by requiring
\begin{equation}\label{allowable}
a+ N \equiv 0, 1 \, ~(\bmod \, 3),
\end{equation}
and  for an exit edge labeled $a$ one can determine the  new vertex  label $v_{N'}$ as
\begin{equation} \label{update-step}
N' := \lfloor \frac{N + M a}{3} \rfloor. 
\end{equation}
To the graph $G$ one adds a directed edge  for each allowable value $a_n=0$ or $1$ from $N$ to $N'$ labeled by $a_n$.

Now one sees that the are only finitely many vertices $v_N$ that can be reached from the vertex $v_0$.
One proves by induction on the number of steps $n$ taken that any reachable vertex 
 $v_N$ has vertex label.
 $$ 
 0 \le N \le \lfloor \frac{M}{2} \rfloor.
$$
This holds for the initial vertex, while
for the induction step, we obtain from \eqref{update-step} that
$$
N' \le \frac{N+ Ma}{3} \le \frac{ M/2 + M}{3} \le \frac{M}{2}.
$$ 
Thus the process of constructing the graph will halt.

It is easily seen that the presentation $\mathcal{G}= (G, v_0)$
obtained this way  has the desired properties.
\begin{enumerate}
\item[(1)]
The graph $G$ is right-resolving because there every vertex has exit edges with
distinct edge-labels by construction. 
\item[(2)]
The graph $G$ is essential because every vertex has at least one admissible exit edge,
as shown above.
\item[(3)]
The graph is connected since we  include in it only vertices reachable from $v_0$.
\end{enumerate}
Since $G$ is essential, $\mathcal{G}$ is a presentation of a certain $3$-adic path set via 
the correspondence taking infinite walks beginning at the $v_0$-state in $\mathcal{G}$ to words in the edges traversed. 
Denote this path set $X_{\mathcal{G},0}$. 

It remains to prove  that this is the path set $X(1,M)$ corresponding to $\mathcal{C}(1, M)$, which is the claim that 
$$
X_{\mathcal{G},0} = X(1,M).
$$
To prove the claim, let  $\Phi : X_{\mathcal{G},0} \rightarrow \mathbb{Z}_3$ be the map 
\[\cdots a_2 a_1 a_0 \mapsto \sum_{k=0}^{\infty}{a_k3^k}.\] $\Phi$ is clearely an injection.
$\Phi(X_{\mathcal{G},0}) \subset \mathcal{C}(1,M)$:  Since $ \cdots a_2 a_1 a_0 \in X_{\mathcal{G},0}$ 
is a word in the full shift on $\{0,1\}$, $\Phi(\cdots a_2 a_1 a_0) = \sum_{k=0}^{\infty}{a_k3^k}$
 omits the digit 2, so that $\Phi(X_{\mathcal{G},0}) \subset \Sigma_3$. But the algorithm was 
 constructed specifically so that, given a path $\pi = a_l a_{l-1} \cdots a_2 a_1 a_0$ in $\mathcal{G}$ 
 originating at 0, there is an edge labeled $a_{l+1} \in \{0,1\}$ from the terminal vertex $t(\pi)$ 
 if and only if each digit of the 3-adic expansion of $M \cdot \big(\sum_{k=0}^{l+1}{c_k3^k}\big)$
  which cannot be altered by any potential $(l+2)$nd digit is either 0 or 1. 
This shows both that $\Phi(X_{\mathcal{G},0}) \subset \frac{1}{M} \Sigma_3$ and 
$\mathcal{C}(1,M) \subset \Phi(X_{\mathcal{G},0})$, so that 
$\Phi|_{\Phi^{-1}(\mathcal{C}(1,M))} : X_{\mathcal{G},0} \rightarrow \mathcal{C}(1,M)$ 
is a bijection. Assigning the appropriate metric to $X_{\mathcal{G},0}$ makes $\Phi$ 
an isomorphism in a now obvious way, proving the claim. 
\end{proof}

We obtain an algorithm to construct $\mathcal{G}= (G, v_0)$ based on the construction above.\medskip

\noindent {\bf Algorithm A} {\em (Algorithmic Construction of Path Set Presentation  $X(1, M)$).}

\begin{enumerate}
\item
(Initial Step) Start with initial marked vertex $v_0$, and initial vertex
set  $I_{0} := \{ v_0\}$.  Add an  exit edge with edge label $0$
giving a self-loop to $v_0$, and add 
 another  exit edge with edge label $1$ going to new vertex $v_m$ with vertex label 
 $m := \lfloor M/3 \rfloor,$
Add these two edges and their labels to form (labeled) edge table $E_1$. Form the new vertex set 
$I_{1} := \{ v_m\}$,  and go to Recursive Step with $j=1$. 
\item
(Recursive step) Given value $j$, a nonempty new vertex set $I_j$ of level $j$ vertices, a
current vertex set $V_j$ and current edge set $E_j$.  At step $j+1$ determine  all 
allowable exit edge labels  from vertices $v_N$ in $I_j$, using the criterion \eqref{allowable}, 
 and compute vertices reachable by these exit edges, with
reachable vertex labels computed by update equation \eqref{update-step}.
Add these new edges and their labels to current edge set  to make updated current edge set $E_{j+1}$.
Collect all  vertices reached that are not  in current vertex set $V_j$  into a new vertex set
 $I_{j+1}$.  Update current vertex set $V_{j+1} = V_j \cup I_{j+1}.$
Go to test step.
\item
 (Test step).
 If the current vertex set $I_{j+1}$ is empty, halt, with the complete presentation $\sG = (G, v_0)$
 given by sets $V_{j+1}, E_{j+1}$.
  If  $I_{j+1}$  is nonempty, reset $j \mapsto j+1$ and go to  Recursive Step.
 \end{enumerate}
 
 The correctness of the algorithm follows from the discussion above.
 
%
%
%

\subsection{Constructing a path set presentation $X(1,M_1,\ldots,M_n)$} \label{sec31b}

Given integers $1 \leq M_1 < \ldots < M_n$, we now have a way to construct 
graph presentations of the path sets  $X(1,M_i)$ for each $i$. Since 
\[X(1,M_1,\ldots,M_n) = \bigcap_{i=1}^n X(1,M_i),\]
we need to know how to combine these graphs.

Recall the following definition from Lind and Marcus \cite{LM95}:
\begin{defn}  Let $\mathcal{G}_1$ and $\mathcal{G}_2 $ be labeled graphs with the same 
alphabet $\mathcal{A}$, and let their underlying graphs be $G_1 = (\mathcal{V}_1,\mathcal{E}_1)$ 
and $G_2 = (\mathcal{V}_2,\mathcal{E}_2)$. The label product 
$\mathcal{G}_1 \star \mathcal{G}_2$ of $\mathcal{G}_1$ and $\mathcal{G}_2$ 
has underlying graph $G$ with vertex set $\mathcal{V} = \mathcal{V}_1 \times \mathcal{V}_2$, 
edge set $\mathcal{E} = \{(e_1,e_2) \in \mathcal{E}_1 \times \mathcal{E}_2 : e_1 \text{ and } e_2 \text{ have the same labels}\}$.
\end{defn}

In \cite[Proposition 4.3]{AL12a}, we show that if $(\mathcal{G}_i,v_i)$ is a graph presentation
 of the path set $\mathcal{P}_i$, then $(\mathcal{G}_1 \star \mathcal{G}_2,(v_1,v_2))$ is a 
 graph presentation for $\mathcal{P}_1 \cap \mathcal{P}_2$. It follows that we can form a 
 presentation of $\mathcal{C}(1,M_1, \cdots , M_n)$ as the label product 
\[(\mathcal{G},v) = (\mathcal{G}_1 \star \mathcal{G}_2 \star \cdots \star \mathcal{G}_n, (v_1,v_2,\ldots, v_n)),\] 
where $(\mathcal{G}_i,v_i)$ is the presentation of $\mathcal{C}(1,M_i)$ just constructed.


\begin{thm}\label{th33n}
For $1 < M_1<  M_2 < \cdots < M_n$, with all $M_i \equiv 1 ~(\bmod \, 3)$, the set 
$$
\mathcal{C}(1, M_1, M_2, \cdots, M_n) ) = \bigcap_{i=1}^n \mathcal{C}(1, M_i) = 
\Sigma_3 \cap (\bigcap_{i=1}^n \frac{1}{M_i} \Sigma_3),
$$ 
has $3$-adic expansions of its elements given by a path set
$X(1, M_1, M_2, \cdots, M_n)$.
This path set has an algorithmically computable
 presentation  $(\mathcal{G}, v_{\bf 0})$, in which the vertices $v_{\bf{N}}$ 
are labeled with a subset of integer vectors ${\bf{N}} =(N_1, N_2, ..., N_n)$ with $0 \le N_i \le \frac{1}{2} M_i$, always including 
the zero vector $\bf{0}$.  The presentation has at most $\prod_{i=1}^n ( 1+ \lfloor \frac{1}{2}M_i \rfloor)$ vertices in the underlying graph.
This presentation is right-resolving, connected and essential. 
\end{thm}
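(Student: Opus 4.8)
The plan is to realize the desired presentation as a label product of the single-modulus presentations furnished by Theorem~\ref{th31n}, and then to prune it so that it becomes essential. First I would apply Theorem~\ref{th31n} to each $M_i$ to obtain a right-resolving, connected, essential presentation $(\mathcal{G}_i, v_0^{(i)})$ of the path set $X(1,M_i)$, in which the vertices carry distinct labels $N_i$ with $0 \le N_i \le \lfloor \frac{1}{2} M_i\rfloor$ and $v_0^{(i)}$ is the vertex labeled $0$. Form the label product $\mathcal{G} = \mathcal{G}_1 \star \cdots \star \mathcal{G}_n$ with marked vertex $v_{\mathbf 0} = (v_0^{(1)},\ldots,v_0^{(n)})$; its vertices are naturally indexed by integer vectors $\mathbf{N} = (N_1,\ldots,N_n)$, so $v_{\mathbf 0}$ carries the label $\mathbf 0$. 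Iterating \cite[Proposition 4.3]{AL12a} shows that $(\mathcal{G}, v_{\mathbf 0})$ presents $\bigcap_{i=1}^n X(1,M_i)$, which equals $X(1,M_1,\ldots,M_n)$, the path set describing the $3$-adic expansions of $\mathcal{C}(1,M_1,\ldots,M_n)$.

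Next I would read off the vertex count and the properties inherited automatically. Since each coordinate $N_i$ lies in $\{0,1,\ldots,\lfloor\frac{1}{2} M_i\rfloor\}$, the graph $\mathcal{G}$ has at most $\prod_{i=1}^n(1 + \lfloor\frac{1}{2} M_i\rfloor)$ vertices; discarding those not reachable from $v_{\mathbf 0}$ makes the presentation connected without disturbing the path set $X_{\mathcal{G}}(v_{\mathbf 0})$, because a walk from $v_{\mathbf 0}$ never leaves the reachable set. For right-resolvingness: an edge of $\mathcal{G}$ leaving $\mathbf{N}$ with label $a$ is a tuple $(e_1,\ldots,e_n)$ in which each $e_i$ is an edge labeled $a$ leaving $N_i$ in $\mathcal{G}_i$; as each $\mathcal{G}_i$ is right-resolving there is at most one such $e_i$, hence at most one such tuple, so $\mathcal{G}$ and every vertex-deleted subgraph of it is right-resolving.

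The property that is \emph{not} automatic, and which I expect to be the main obstacle, is essentiality: the label product of essential graphs can have sink vertices. By criterion~\eqref{allowable}, a reachable vertex $\mathbf{N}$ admits a common exit label $a \in \{0,1\}$ exactly when $a + N_i \equiv 0$ or $1 \pmod 3$ can hold for all $i$ simultaneously, and this fails precisely when some coordinate has $N_i \equiv 1 \pmod 3$ (forcing $a = 0$) while another has $N_j \equiv 2 \pmod 3$ (forcing $a = 1$); such an $\mathbf{N}$ is then a sink. I would pass to the essential part by iteratively deleting sink vertices — deleting one may create another, but there are finitely many — and then restricting once more to vertices reachable from $v_{\mathbf 0}$. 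No infinite walk out of $v_{\mathbf 0}$ ever enters a vertex with no outgoing edge, so these deletions leave the path set $X(1,M_1,\ldots,M_n)$ unchanged. The marked vertex survives: each $\mathcal{G}_i$ has a self-loop at $v_0^{(i)}$ labeled $0$ (the Initial Step of Algorithm~A), hence $\mathcal{G}$ has a self-loop at $v_{\mathbf 0}$ labeled $0$, so $v_{\mathbf 0}$ is never a sink. After the deletions every vertex has an outgoing edge, and after the final reachability restriction every vertex also has an incoming edge (the self-loop for $v_{\mathbf 0}$, a reachability path for any other vertex), so there are no sources; the presentation is therefore essential, connected, right-resolving, contains $\mathbf 0$ as a vertex label, and still satisfies the vertex bound since trimming only removes vertices. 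Each step — Algorithm~A, the label product, reachability, sink deletion — is effective, so the whole presentation is algorithmically computable.
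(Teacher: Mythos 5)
Your proposal follows the same route as the paper: build the single-modulus presentations via Theorem~\ref{th31n} (Algorithm~A) and combine them with the label product of \cite[Proposition 4.3]{AL12a} (Algorithm~B), reading off the vertex bound $\prod_{i=1}^n(1+\lfloor \frac{1}{2}M_i\rfloor)$ from the product structure. The one substantive difference is that you treat essentiality (and connectivity) of the product as something that must be restored by pruning, whereas the paper simply asserts that ``each step preserves the properties of the presentation graph being right-resolving, connected and essential.'' Your caution is warranted: the raw label product can indeed have reachable sinks. For instance, with $M_1=7$ and $M_2=13$ the edge labeled $1$ out of $(0,0)$ leads to the product vertex $(2,4)$; since $2\equiv 2\pmod 3$ forces exit label $1$ in $\mathcal{G}_1$ while $4\equiv 1\pmod 3$ forces exit label $0$ in $\mathcal{G}_2$, that vertex has no outgoing edge. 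So the iterated sink deletion followed by a reachability restriction that you describe (which leaves the path set unchanged, and cannot delete $v_{\mathbf 0}$ because of its label-$0$ self-loop) is exactly the step needed to make the stated conclusion literally true; your write-up supplies a detail the paper's own proof glosses over. Everything else --- preservation of right-resolvingness under the product, the vertex count, and effectiveness --- matches the paper.
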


\begin{proof} The presentation is obtained by recursively applying the label product construction to
the presentations $\mathcal{C}(1, M_i)$, see Algorithm B below.
Each step  preserves the properties
of the presentation graph being right-resolving, connected and  essential. 
The number of states of the label product construction is
at most the product of the number of states in the two presentations being constructed.
By  Theorem \ref{th31n}, the presentation of $\mathcal{C}(1, M_i)$ has at most
  $(1 + \lfloor \frac{1}{2} M\rfloor )$ vertices. 
The bound given follows by induction on the 
successive label product constructions. 
\end{proof}

\noindent{\bf Algorithm B} {\em (Algorithmic Construction of Path Set Presentation  $X(1, M_1, ..., M_n)$.}

\begin{enumerate}
\item
(Initial Step) 
Construct presentations $\mathcal{G}_i = (G_i, \sL_i)$ for $X(1, M_i)$ to $\mathcal{C}(1, M_i)$
for $1 \le i \le n$, using Algorithm A.  Apply the label product construction to form
$\mathcal{H}_2 := \mathcal{G}_1 \star \mathcal{G}_2$.

\item
For  $2 \le i \le n-1$,
apply the label product construction to form
$$\mathcal{H}_{i+1}=\mathcal{H}_{i} \star \mathcal{G}_{i+1}.$$
Halt when $\mathcal{H}_n$ is computed.
 \end{enumerate}

%
%
%

\subsection{Path Set Characterization of $\mathcal{C}(1, M_1, ..., M_n)$} \label{sec32}

From Theorem \ref{th33n} we easily derive the following result.
\begin{thm} \label{th11}
For any integers $1\leq M_1<\ldots<M_n$, let  
$$\mathcal{C}(1,M_1,\ldots,M_n):=
\Sigma_{3} \cap \frac{1}{M_1}\Sigma_{3} \cap \ldots \cap \frac{1}{M_n}\Sigma_{3}.
$$
This is the set of all $3$-adic integers  $\lambda \in \Sigma_{3}$ such that $M_j \lambda$ omits the digit $2$
in its $3$-adic expansion. Then:

(1) The complete set of the $3$-adic expansions of numbers in the set $\mathcal{C}(1,M_1,\ldots,M_n)$,
 is a path set in the alphabet $\sA= \{ 0, 1, 2\}.$

 (2) The Hausdorff dimension of $\mathcal{C}(1,M_1,\ldots,M_n)$
 is $\log_3 \beta$, where $\log \beta$ is the topological entropy of this path set. Here $\beta$ necessarily satisfies
 $1 \le \beta \le 2$, and  $\beta$ is
 a Perron number, i.e. it is a real algebraic integer $\beta \ge 1$ such that all its
 other  algebraic conjugates satisfy $|\sigma(\beta)| < \beta.$ 
 \end{thm}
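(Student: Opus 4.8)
The plan is to deduce Theorem~\ref{th11} directly from the structural results already in hand, namely Theorem~\ref{th33n} together with Proposition~\ref{pr22a}. The reductions at the start of Section~\ref{sec3} show that it suffices to treat the case in which all $M_i \equiv 1 ~(\bmod \, 3)$: if some $M_i \equiv 0 ~(\bmod \, 3)$ we may replace $M_i$ by $M_i/3^{v_3(M_i)}$ without changing $\mathcal{C}(1,M_1,\ldots,M_n)$, and if some $M_i \equiv 2 ~(\bmod \, 3)$ then $\mathcal{C}(1,M_i) = \{0\}$, so the whole intersection is $\{0\}$, whose set of $3$-adic expansions is the single-point path set presented by a one-vertex graph with a $0$-labeled self-loop, and whose Hausdorff dimension is $0 = \log_3 1$, consistent with the statement ($\beta = 1$ is trivially a Perron number). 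So assume henceforth $M_i \equiv 1 ~(\bmod \, 3)$ for all $i$.

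For part~(1), Theorem~\ref{th33n} gives an algorithmically computable presentation $(\mathcal{G}, v_{\mathbf 0})$ of a path set $X(1,M_1,\ldots,M_n)$ whose elements, viewed via the digit map $\phi$, are exactly the $3$-adic expansions of the members of $\mathcal{C}(1,M_1,\ldots,M_n)$; moreover this presentation is right-resolving, connected, and essential. This is precisely the assertion of part~(1): the set of $3$-adic expansions of numbers in $\mathcal{C}(1,M_1,\ldots,M_n)$ is a path set in the alphabet $\mathcal{A} = \{0,1,2\}$ (the expansions actually use only digits $0$ and $1$, but this is still a path set in the three-letter alphabet). The characterization ``$\lambda \in \Sigma_3$ with $M_j\lambda$ omitting the digit $2$'' is just the definition of $\mathcal{C}(1,M_1,\ldots,M_n)$ unwound through $\frac{1}{M_j}\Sigma_3 = \{\lambda : M_j\lambda \in \Sigma_3\}$, using that each $1/M_j$ is $3$-integral.

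For part~(2), I apply Proposition~\ref{pr22a} to $K = \mathcal{C}(1,M_1,\ldots,M_n)$ with the right-resolving presentation $(\mathcal{G}, v_{\mathbf 0})$: this yields $\dim_H(\mathcal{C}(1,M_1,\ldots,M_n)) = \log_3 \beta$, where $\beta = \rho(\mathbf A_G)$ is the spectral radius of the adjacency matrix of the underlying graph $G$, and by \eqref{top-entropy} $\log\beta$ is the topological entropy of the path set. For the bound $1 \le \beta \le 2$: since $G$ is essential every vertex has an outgoing edge (shown in the proof of Theorem~\ref{th31n}), so $G$ contains a directed cycle, hence $\beta \ge 1$; and since the presentation is right-resolving with edge labels drawn from $\{0,1\}$, every vertex has out-degree at most $2$, so $\mathbf A_G$ has all row sums $\le 2$, forcing $\rho(\mathbf A_G) \le 2$. (Equivalently, $\mathcal{C}(1,M_1,\ldots,M_n) \subseteq \Sigma_3$ gives $\dim_H \le \log_3 2$ directly.) Finally, $\beta$ is a Perron number: it is the spectral radius of a nonnegative integer matrix $\mathbf A_G \ne 0$ whose graph contains a cycle, hence $\beta$ is a real algebraic integer $\ge 1$. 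To get the strict inequality $|\sigma(\beta)| < \beta$ for conjugates, I would pass to a strongly connected component $H$ of $G$ realizing the spectral radius (the Perron eigenvalue of $\mathbf A_G$ equals the maximum of the Perron eigenvalues of $\mathbf A_H$ over $H \in SC(G)$), so that $\mathbf A_H$ is irreducible; then invoke Lind's characterization of Perron numbers (\cite{Lin84}), or alternatively cite \cite[Theorem 4.2.3]{LM95} together with the fact that an irreducible adjacency matrix of an essential strongly connected graph is conjugate to the adjacency matrix of its unique minimal presentation, whose Perron eigenvalue is a Perron number.

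\textbf{Main obstacle.} The genuinely delicate point is the last one: establishing that $\beta$ is a Perron number in the \emph{strict} sense ($|\sigma(\beta)| < \beta$ for all conjugates $\sigma(\beta) \ne \beta$), rather than merely a real algebraic integer $\ge 1$ that is an eigenvalue of a nonnegative integer matrix. A bare nonnegative integer matrix can have $\beta$ equal in modulus to another eigenvalue (e.g.\ periodicity), so one must reduce to an irreducible — indeed primitive — block. The safe route is to cite Lind's theorem \cite{Lin84} characterizing spectral radii of nonnegative integer matrices with a cycle as exactly the Perron numbers, which makes this step a one-line citation; the alternative self-contained argument requires extracting the top strongly connected component and checking aperiodicity or else appealing to the theory of minimal right-resolving presentations, which is more work than is warranted here. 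Everything else is a direct transcription of Theorems~\ref{th33n} and Proposition~\ref{pr22a}.
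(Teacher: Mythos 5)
Your proposal is correct and takes essentially the same route as the paper's proof: part (1) is read off from Theorem~\ref{th33n}, and part (2) from Proposition~\ref{pr22a} together with the row-sum bound on the adjacency matrix giving $1 \le \beta \le 2$. The one place you go beyond the paper is the strict conjugate inequality $|\sigma(\beta)| < \beta$: the paper's own proof stops at ``$\beta$ is a real algebraic integer equal to the spectral radius'' and never addresses the Perron property, so your flagged reduction to a primitive strongly connected component (or the appeal to Lind's characterization) is not redundant caution --- the spectral radius of a general nonnegative integer matrix containing a cycle can be, e.g., $\sqrt{2}$, which is not a Perron number, so some argument singling out an aperiodic dominant component really is needed to finish that clause.
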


\begin{proof}
Theorem~\ref{th33n} gives an explicit construction 
of a presentation $(\sG, v)$ showing that $\mathcal{C}(1,M_1\ldots,M_n)$ is a $p$-adic path set.

By Proposition ~\ref{pr22a}
 the Hausdorff dimension of $\mathcal{C}(1,M_1\ldots,M_n)$ is $\log_3 \beta$, where $\beta$ is the spectral
 radius  of the adjacency matrix $A$ of the underlying graph $G$. 
  Since $A$ is a 0-1 matrix, by Perron-Frobenius theory the spectral radius equals the maximal eigenvalue
 in absolute value, which is necessarily a positive real number
 $\beta$. It is a solution to a monic polynomial over $\mathbb{Z}$, so that $\beta$ is necessarily an algebraic integer. 
 By construction, the sum of the entries of any row in $A$ is either 1 or 2, so that we also have $1 \leq \beta \leq 2$. 
\end{proof}

\begin{rem} The adjacency matrix $A$ in the sets above {\em need not be irreducible}. 
Example \ref{example33} 
below presents a graph $\mathcal{C}(1, 19)$ having a reducible matrix $A$.
Here the  underlying graph $\sG$ has  two strongly connected components. 
 \end{rem}

Combining the results above establishes  Theorem \ref{th12}.


\begin{proof}[Proof of Theorem \ref{th12}]
(1) 
This follows from Theorem \ref{th31n} and Theorem \ref{th33n}, 
with the algorithm for constructing a 
the presentation of the path set $X(1, M_1, M_2, \cdots , M_n)$ given by combining 
 {Algorithm A} and { Algorithm B}.

(2) This follows from Theorem \ref{th11}. 
\end{proof}

%
%
%

\subsection{Examples}\label{sec33}

We present several examples of path set presentations. 

%
\begin{exmp}\label{example31}
The  $3$-adic Cantor set $\Sigma_3 = \mathcal{C}(1) = \mathcal{C}(1, 1)$ 
has a path set presentation  $ (\sG, v_0)$ pictured in Figure \ref{fig31}. It is the full shift on
two symbols, and the initial vertex is the vertex labeled $0$. The underlying graph $G$  of $\sG$ is a double cover
of a one vertex graph with two symbols. The advantage of the graph $G$ pictured is that
a path for it is completely determined by the set of vertex symbols that it passes through.

%

\begin{figure}[ht]\label{fig31}
	\centering
	\psset{unit=1pt}
	\begin{pspicture}(-80,-40)(80,40)
		\newcommand{\noden}[2]{\node{#1}{#2}{n}}
		\noden{0}{-35,0}
		\noden{1}{35,0}
		\bcircle{n0}{90}{0}
		\dline{n0}{n1}{1}{0}
		\bcircle{n1}{270}{1}
	\end{pspicture}
		\newline
\hskip 0.5in {\rm FIGURE 3.1.} Path set presentation of Cantor shift $\Sigma_3=\mathcal{C}(1)$. The marked vertex is  $0$.
\newline
\newline
\end{figure}

\end{exmp}
\pagebreak

\begin{exmp}\label{example32}
A path set presentation of $\mathcal{C}(1,7)$, with $7=(21)_3$ is shown  in Figure \ref{fig32}.
The vertex labeled $0$ is the marked initial state. 

%
%
%

\begin{figure}[ht]\label{fig32}
	\centering
	\psset{unit=1pt}
	\begin{pspicture}(-80,-50)(80,150)
		\newcommand{\noden}[2]{\node{#1}{#2}{n}}
		\noden{0}{0,100}
		\noden{1}{50,50}
		\noden{2}{-50,50}
		\noden{10}{0,0}
		\bcircle{n0}{0}{0}
		\bcircle{n10}{180}{1}
		\bline{n0}{n2}{1}
		\bline{n2}{n10}{1}
		\bline{n10}{n1}{0}
		\bline{n1}{n0}{0}
	\end{pspicture}
	\newline
\hskip 0.5in {\rm FIGURE 3.2.} Path set presentation of $\mathcal{C}(1,7)$. The marked vertex is $0$.
\end{figure}

The graph in Figure  \ref{fig32} has adjacency matrix 
\begin{equation*}
\bf{A} = \left(\begin{array}{cccc}
1 & 1 & 0 & 0 \\
0 & 0 & 1 & 0 \\
0 & 0 & 1 & 1 \\
1 & 0 & 0 & 0 \\
\end{array}\right),
\end{equation*}
which has Perron-Frobenius eigenvalue $\beta = \frac{1 + \sqrt{5}}{2}$, so 
\[\dim_H(\mathcal{C}(1,7)) = \log_3 \left( \frac{1 + \sqrt{5}}{2} \right) \approx 0.438018.
\]
\end{exmp}


\begin{exmp}\label{example33}
A path  set presentation of $\mathcal{C}(1,19)$, with $19 = (201)_3$,  is shown in 
Figure \ref{fig33}. The node labeled $0$ is the marked initial state.

%
%
%

\begin{figure}[ht]\label{fig33}
 \centering
 \psset{unit=1pt}
 \begin{pspicture}(-100,-165)(100,160)
  \newcommand{\noden}[2]{\node{#1}{#2}{n}}
  \noden{0}{0,110}
  \noden{1}{50,55}
  \noden{10}{50,-55}
  \noden{100}{0,-110}
  \noden{22}{-50,-55}
  \noden{20}{-50,55}
  \noden{21}{0,-25}
  \noden{2}{0,25}
  \bcircle{n0}{0}{0}
  \bcircle{n100}{180}{1}
  \dline{n2}{n21}{1}{0}
  \aline{n0}{n20}{1}
  \aline{n20}{n22}{1}
  \aline{n22}{n100}{1}
  \aline{n100}{n10}{0}
  \aline{n20}{n2}{0}
  \bline{n1}{n0}{0}
  \bline{n10}{n21}{1}
  \bline{n10}{n1}{0}
 \end{pspicture}
 \newline
 \hskip 0.5in {\rm FIGURE 3.3.} Path set presentation of $\mathcal{C}(1,19)$. The marked vertex is $0$.

\end{figure}

The graph in Figure \ref{fig33} has
 adjacency matrix \\
\begin{equation*}
\bf{A} = \left(\begin{array}{cccccccc}1 & 1 & 0 & 0 & 0 & 0 & 0 & 0 \\0 & 0 & 1 & 1 & 0 & 0 & 0 & 0 \\0 & 0 & 0 & 0 & 1 & 0 & 0 & 0 \\0 & 0 & 0 & 0 & 0 & 1 & 0 & 0 \\0 & 0 & 0 & 0 & 1 & 0 & 1 & 0 \\0 & 0 & 0 & 1 & 0 & 0 & 0 & 0 \\0 & 0 & 0 & 0 & 0 & 1 & 0 & 1 \\1 & 0 & 0 & 0 & 0 & 0 & 0 & 0 \\\end{array}\right),\end{equation*}
which has 
Perron eigenvalue $\beta \approx 1.465571$, so
\[\dim_H(\mathcal{C}(1,19)) = \log_3 \beta \approx 0.347934.\]
\end{exmp}


\begin{exmp}\label{example34}
We consider implementation of  the algorithm for $\mathcal{C}(1,7,19)$. 
We start from  the presentations of $\mathcal{C}(1, 7)$ and $\mathcal{C}(1, 19)$ in Example \ref{example31}.
Taking the label product gives us a presentation of $\mathcal{C}(1,7,19)$, which is 
shown in Figure \ref{fig34}.

%
%
%
\begin{figure}[ht]\label{fig34}
	\centering
	\psset{unit=1.3pt}
	\begin{pspicture}(-50,-45)(50,155)
	\newcommand{\nodeq}[2]{\node{#1}{#2}{q}}
	\nodeq{0-0}{0,115}
	\nodeq{2-20}{-35,75}
	\nodeq{10-22}{-35,40}
	\nodeq{10-100}{0,0}
	\nodeq{1-10}{35,40}
	\nodeq{0-1}{35,75}
	\bcircle{q0-0}{0}{0}
	\bline{q0-0}{q2-20}{1}
	\bline{q2-20}{q10-22}{1}
	\bline{q10-22}{q10-100}{1}
	\bline{q10-100}{q1-10}{0}
	\aline{q1-10}{q0-1}{0}
	\bline{q0-1}{q0-0}{0}
	\bcircle{q10-100}{180}{1}
	\end{pspicture}
	\newline
\hskip 0.5in {\rm FIGURE 3.4.} Path set presentation of $\mathcal{C}(1,7,19)$. The marked vertex is $0$.
\end{figure}

This graph $G$ for $\mathcal{C}(1, 7, 19)$ has adjacency matrix $\bf{A}$  given by:

\begin{equation*}
\bf{A} = \left(\begin{array}{cccccc}
1 & 1 & 0 & 0 & 0 & 0 \\
0 & 0 & 1 & 0 & 0 & 0 \\
0 & 0 & 0 & 1 & 0 & 0 \\
0 & 0 & 0 & 1 & 1 & 0 \\
0 & 0 & 0 & 0 & 0 & 1 \\
1 & 0 & 0 & 0 & 0 & 0 \\
\end{array}\right).
\end{equation*}

The Perron eigenvalue $\beta \approx 1.46557$ of this matrix  is the largest real root of
 $\lambda^6 -2 \lambda^5 +\lambda^4 -1 = 0$: 
The Hausdorff dimension of $\mathcal{C}(1,7,19)$ is then
\begin{equation} 
\dim_H(\mathcal{C}(1,7,19)) = \log_3 \beta \approx 0.347934.
\end{equation}
\end{exmp}

\begin{exmp} 
The set $\mathcal{C}(1,43)$, with $N= 43= (1121)_3$
has $M \equiv \, 1\, (\bmod \, 3)$, but nevertheless has Hausdorff dimension $0$. A  presentation
of the path set associated to  $\mathcal{C}(1,43)$ is given in Figure \ref{fig35}.


\begin{figure}[ht]\label{fig35}
	\centering
	\psset{unit=1.3pt}
	\begin{pspicture}(-80,-15)(80,105)
	\newcommand{\nodeq}[2]{\node{#1}{#2}{q}}
	\nodeq{0}{0,90}
	\nodeq{112}{-45,90}
	\nodeq{201}{-45,55}
	\nodeq{20}{-45,0}
	\nodeq{121}{0,0}
	\nodeq{2}{0,25}
	\nodeq{12}{45,0}
	\nodeq{120}{45,55}
	\bcircle{q0}{270}{0}
	\bline{q0}{q112}{1}
	\bline{q112}{q201}{1}
	\bline{q201}{q20}{0}
	\bline{q20}{q121}{1}
	\aline{q20}{q2}{0}
	\dline{q121}{q12}{0}{1}
	\aline{q2}{q120}{1}
	\aline{q120}{q12}{0}
	\bline{q120}{q201}{1}
	\end{pspicture}
	\bigskip
	\newline
\hskip 0.5in {\rm FIGURE 3.5.} Path set presentation of $\mathcal{C}(1,43)$. The marked vertex is $0$.
\newline
\newline
\end{figure}

The  graph in Figure \ref{fig35} has four strongly connected components, with vertex sets
$\{0\}, \{ 112\}, \{ 2, 120, 201, 20\},$ and $\{ 12, 121\}$  respectively, each of whose underlying path sets have
Hausdorff dimension $0$.
\end{exmp}

\newpage

%
%
%

\section{Infinite Families}\label{sec4}
%
%
%

\subsection{Basic Properties}\label{sec41}

We have  the following simple result, showing the influence of the digits in
the $3$-adic expansion of $M$ on the size of the set $\mathcal{C}(1,M)$
and $\mathcal{C}(1, M_1, M_2, \cdots, M_k)$.

\begin{thm} \label{th31}
(1) If the smallest nonzero $3$-adic digit in the $3$-adic expansion
of the positive integer $M$ is $2$, then $\mathcal{C}(1, M)= \{0\}$,
and
\begin{equation}
\dim_{H}( \mathcal{C}(1,M))=0.
\end{equation}

(2) If positive integers $M_1, M_2 , ..., M_n \in \Sigma_3$ all
have the property  that   their $3$-adic expansions 
$(M_i)_3$ (equivalently their ternary expansions) contain only digits $0$ and $1$, then
\begin{equation}
\dim_{H} (\mathcal{C}(1, M_1, M_2, ..., M_n)) >0.
\end{equation}
 \end{thm}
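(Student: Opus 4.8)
Here is my plan for proving both parts of Theorem \ref{th31}.

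\textbf{Part (1).} The plan is to argue directly from the $3$-adic expansions. Suppose $\lambda \in \mathcal{C}(1,M)$ is nonzero, so $\lambda = \sum_{j \ge j_0} a_j 3^j$ with $a_{j_0} \in \{1,2\}$ being its lowest nonzero digit, and $a_j \in \{0,1\}$ for all $j$ since $\lambda \in \Sigma_3$; hence $a_{j_0}=1$. Write $(M)_3$ with lowest nonzero digit at position $k_0$, equal to $2$ by hypothesis. Then the lowest nonzero $3$-adic digit of $M\lambda$ occurs at position $j_0+k_0$, and its value there is $a_{j_0} \cdot 2 = 2 \pmod 3$ (there are no lower-order contributions to carry into this position). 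Therefore $M\lambda$ has the digit $2$ in its expansion, so $M\lambda \notin \Sigma_3$, a contradiction. Thus $\mathcal{C}(1,M) = \{0\}$ and its Hausdorff dimension is $0$ (e.g. by Theorem \ref{th11}, or trivially since a point has dimension $0$). This part is short and the only mild subtlety is checking the carry-propagation claim about the lowest nonzero digit, which is immediate since multiplication by $M$ cannot produce a nonzero digit below position $j_0 + k_0$.

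\textbf{Part (2).} The plan is to exhibit an explicit positive-entropy subshift inside the path set. The key observation is that if every $M_i$ has all $3$-adic digits in $\{0,1\}$, then for any $\lambda$ whose $3$-adic expansion is ``sufficiently sparse'' — say, supported on positions $j$ with large gaps between consecutive nonzero digits — the products $M_i \lambda$ also have all digits in $\{0,1\}$, because the shifted copies of the digit strings of $M_i$ arising from the (at most one nonzero) digits of $\lambda$ in a given window do not overlap and hence no carries occur. Concretely, let $\ell = \lfloor \log_3 M_n \rfloor + 1$ bound the $3$-adic length of every $M_i$. Consider the set $Y$ of $\lambda = \sum a_j 3^j$ with $a_j \in \{0,1\}$ such that $a_j = 0$ whenever $j \not\equiv 0 \pmod{\ell+1}$; that is, we allow a free binary choice of digit only at positions $0, \ell+1, 2(\ell+1), \dots$ and force $0$ elsewhere. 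For such $\lambda$, each nonzero digit $a_{m(\ell+1)}=1$ contributes the digit string of $M_i$ starting at position $m(\ell+1)$, and since $M_i$ has length at most $\ell < \ell+1$, these contributions for different $m$ occupy disjoint blocks of positions; hence $M_i\lambda$ has digits in $\{0,1\}$ with no carrying, so $M_i \lambda \in \Sigma_3$. Therefore $Y \subseteq \mathcal{C}(1,M_1,\dots,M_n)$.

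It remains to compute $\dim_H(Y)$ and observe it is positive. The set $Y$ is (affinely $3$-adically) a self-similar Cantor-type set: it consists of all $3$-adic integers obtainable by freely choosing, for each $m \ge 0$, a digit in $\{0,1\}$ at position $m(\ell+1)$ and $0$ at the $\ell$ intervening positions. Under the homeomorphism $\phi_3$ of Proposition \ref{pr22a} this maps to a standard real self-similar set with $2$ maps of ratio $3^{-(\ell+1)}$, so $\dim_H(Y) = \frac{\log 2}{(\ell+1)\log 3} = \frac{1}{\ell+1}\log_3 2 > 0$. By monotonicity of Hausdorff dimension under inclusion, $\dim_H(\mathcal{C}(1,M_1,\dots,M_n)) \ge \frac{1}{\ell+1}\log_3 2 > 0$. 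The main obstacle here is purely bookkeeping: one must verify carefully that with gaps of size $\ell+1$ between allowed nonzero positions, the digit strings of the $M_i$ genuinely do not interact, and that $Y$ is nonempty and genuinely carries positive dimension rather than collapsing. Both are straightforward once the window size is chosen larger than the $3$-adic length of $M_n$; alternatively one could phrase $Y$ directly as a path set on a small explicit graph (a single cycle of length $\ell+1$ with a doubled edge at one vertex) and invoke Theorem \ref{th11} to read off $\dim_H(Y) = \log_3 \beta$ with $\beta^{\ell+1} = 2$.
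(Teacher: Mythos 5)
Your proposal is correct. Part (1) is essentially the paper's argument: the paper also reduces to the observation that for nonzero $N \in \Sigma_3$ the lowest nonzero digit of $MN$ is $2$ (it phrases the conclusion as the Algorithm~A graph collapsing to the single vertex $v_0$ with its $0$-labelled self-loop), and your careful check that no carry can reach position $j_0+k_0$ is exactly the point that makes this work. For part (2) the underlying idea is the same --- a $1$ followed by enough $0$'s propagates through multiplication by each $M_i$ without carries --- but the packaging differs. The paper works inside the presentation graph produced by Algorithm~A: it exhibits, for $m=\max_i m_i$, a directed cycle of length $m+1$ at the vertex $v_0$ (edge labelled $1$ followed by $m$ edges labelled $0$) coexisting with the self-loop at $v_0$, and concludes positive topological entropy and hence positive Hausdorff dimension by citing the general entropy--dimension relation. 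You instead bypass the graph machinery entirely by writing down the explicit sparse Cantor set $Y$ of expansions supported on multiples of $\ell+1$, verifying the no-carry property by hand, and invoking monotonicity of Hausdorff dimension. Your route is more self-contained (it does not depend on the correctness of the Algorithm~A presentation) and has the bonus of an explicit quantitative lower bound $\dim_H(\mathcal{C}(1,M_1,\dots,M_n)) \ge \tfrac{1}{\ell+1}\log_3 2$, which the paper's qualitative statement does not record; the paper's route is slightly sharper in principle, since the free concatenation of the two cycles at $v_0$ generates a larger subshift (blocks $0$ and $10^m$ freely interleaved) than your rigid block structure, though neither statement requires the sharper bound.
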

 
 \paragraph{\bf Remark.}

For neither (1) or (2) does the converse hold. 
The example   $M=43= (1121)_3$ has $\dim_{H}( \mathcal{C}(1, M))=0$, 
but its $3$-adic expansion has smallest digit $1$. 
The example $M=64 =(2101)_3$ has $\dim_{H}(\mathcal{C}(1, M))> 0$,
but its $3$-adic expansion has a digit $2$.

 \begin{proof}{\em of Theorem~\ref{th31}.}
(1) Suppose the smallest nonzero $3$-adic digit in the $3$-adic expansion of the positive integer $M$ is $2$. 
Then the graph presentation of the path set $X(1, M)$
associated to $\mathcal{C}(1,M)$ constructed using Algorithm A consists of only the 
node labeled $0$ and the self-loop labeled $0$ at this node (i.e. $\mathcal{C}(1,M) = \{0\}$), 
whence  $\dim_H (\mathcal{C(}1,M)) = 0$. This holds  because 
the smallest nonzero digit of $MN$ for any $N \in \Sigma_3$ is $2$, so that $MN \notin \Sigma_3$.

(2) Suppose $M_1,\ldots , M_n \in \Sigma_3$ are positive integers so that all of their $3$-adic expansions 
have only the digits $0$ and $1$. For each $M_i$, let $m_i$ be the largest nonzero ternary position 
of $M_i$ (i.e. $M_i = 3^{m_i} +$ \emph{lower order terms}). Then in the graph presentation constructed for $X(1,M_i)$
by Algorithm A, the walk starting at the origin, then moving along an edge labeled $1$ (which exists since $(M_i)_3$ 
omits the digit $2$), then moving along $m_i$ consecutive edges labeled $0$, is a directed cycle  at $0$. Since the edge labeled $0$ is a loop at $0$,
 if we let $m = \max_{1 \leq i \leq n} m_i$, then the graph presentation the path set $X(1, M_1, ..., M_j)$
 of $\mathcal{C}(1,M_1,\ldots, M_n)$
  has a directed cycle at $0$ of length $m+1$ given by first traversing the edge labeled $1$, then traversing $m$ consecutive edges 
  labeled $0$. This cycle  and plus the loop of length one at $0$ are distinct directed cycles at $0$.
  It follows that the associated path set has 
  positive topological entropy, and hence  $\mathcal{C}(1,M_1, \ldots , M_n)$ has positive Hausdorff dimension
  by \cite[Theorem 3.1 (iii)]{AL12b}.
 \end{proof}

%
%
%
\subsection{The family $L_k= (1^k)_3 = \frac{1}{2}(3^{k}-1)$.} \label{sec42}

The path set presentations $(\mathcal{G},v_0)$ of the sets
 $\mathcal{C}(1,L_k)$ are particularly simple to analyze.

\begin{thm}\label{th32}

(1) For $k \ge 1$, and $L_k = \frac{1}{2}(3^{k}-1)$, there holds 
\begin{equation}
\dim_H(\mathcal{C}(1,L_k)) =\log_3 \beta_k,
\end{equation}
where $\beta_k$ is the unique real root greater than $1$ of
\begin{equation}
\lambda^k - \lambda^{k-1} -1 =0.
\end{equation}

(2) For $k \ge 6$, the values $\beta_k$ satisfy the bounds
\begin{equation}\label{3030}
1+ \frac{\log k}{k} - \frac{2 \log\log k}{k} \le \beta_k \le 1 +  \frac{\log k}{k}.
\end{equation}
Then for all $k \ge 3$,
\begin{equation}\label{hdL}
\dim_{H} (\mathcal{C}(1,L_k)) = \frac{\log_3 k}{k} + O \Big(\frac{\log\log k}{\log k}\Big).
\end{equation}
\end{thm}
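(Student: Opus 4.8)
The plan is to first work out the path set presentation $(\mathcal{G}, v_0)$ for $\mathcal{C}(1, L_k)$ explicitly using Algorithm A, then read off its adjacency matrix, then compute the characteristic polynomial of the associated directed graph, and finally analyze the location of the relevant root.

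First I would run Algorithm A with $M = L_k = (1^k)_3 = \frac{1}{2}(3^k-1)$. The update rule \eqref{update-step} sends vertex label $N$ along edge $a \in \{0,1\}$ to $N' = \lfloor \frac{N + L_k a}{3}\rfloor$, subject to the admissibility condition \eqref{allowable}: $a + N \equiv 0, 1 \pmod 3$. Since all ternary digits of $L_k$ equal $1$, I expect the reachable vertex labels to be exactly the "truncated repunits" $L_j = (1^j)_3$ for $0 \le j \le k-1$ (together with $L_0 = 0$), so that the graph has exactly $k$ vertices $v_0, v_1, \ldots, v_{k-1}$ labeled by $L_0, L_1, \ldots, L_{k-1}$. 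I would verify: from $v_0$ (label $0$), the edge $a=0$ is a self-loop back to $v_0$, and the edge $a=1$ goes to label $\lfloor L_k/3 \rfloor = L_{k-1}$; from a vertex with label $L_j$ (for $j \ge 1$), the bottom digit is $1$, so only $a=0$ is admissible, and it goes to $\lfloor L_j/3 \rfloor = L_{j-1}$. This gives a graph consisting of a directed cycle $v_0 \to v_{k-1} \to v_{k-2} \to \cdots \to v_1 \to v_0$ of length $k$, plus a self-loop at $v_0$; it is strongly connected (proving part (1) of Theorem \ref{th17a} along the way). The adjacency matrix $\mathbf{A}$ is then a companion-type matrix, and a direct expansion (or the standard cycle-plus-loop computation) gives characteristic polynomial $\lambda^k - \lambda^{k-1} - 1$; equivalently, the graph's loops/cycles yield the entropy equation $1 = x^{-1} + x^{-k}$ where $x = \beta_k$, which rearranges to $\lambda^k = \lambda^{k-1} + 1$. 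By Perron–Frobenius (irreducible, primitive since $\gcd(1, k) = 1$), $\beta_k$ is the unique positive real root, and it is the unique real root $> 1$ since for $\lambda > 1$ the function $\lambda - 1 - \lambda^{1-k}$ is strictly increasing. Then Proposition \ref{pr22a} gives $\dim_H(\mathcal{C}(1, L_k)) = \log_3 \beta_k$, establishing (1).

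For part (2), I would substitute $\lambda = 1 + \frac{t}{k}$ into $\lambda^k - \lambda^{k-1} = 1$, i.e. into $\lambda^{k-1}(\lambda - 1) = 1$, which reads $(1 + t/k)^{k-1} \cdot \frac{t}{k} = 1$. Using $(1 + t/k)^{k-1} = e^{t}(1 + O(t^2/k))$ (valid when $t = o(\sqrt{k})$, which will hold since $t \asymp \log k$), the equation becomes approximately $\frac{t}{k} e^{t} = 1$, i.e. $t e^t = k$. The solution is $t = W(k)$ (Lambert $W$), and the standard asymptotic $W(k) = \log k - \log\log k + o(\log\log k)$ gives the two-sided bound \eqref{3030} after a careful monotonicity argument: plug $\lambda = 1 + \frac{\log k}{k}$ and $\lambda = 1 + \frac{\log k - 2\log\log k}{k}$ into $g(\lambda) := \lambda^{k-1}(\lambda-1) - 1$ and check the signs, using $\log(1 + u) = u - u^2/2 + \cdots$ to control $(k-1)\log(1 + \frac{\log k}{k})$ against $\log k$. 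Finally, $\dim_H = \log_3 \beta_k = \frac{1}{\log 3}\log\beta_k = \frac{1}{\log 3}\log(1 + t/k) = \frac{t}{k \log 3}(1 + O(t/k))$ with $t = \log k - \log\log k + O(1)$, which yields \eqref{hdL}: $\dim_H(\mathcal{C}(1, L_k)) = \frac{\log_3 k}{k} + O\left(\frac{\log\log k}{\log k}\right)$ — though I should double-check whether the intended error term is $O(\frac{\log\log k}{k})$ as stated in Theorem \ref{th17a}(3) versus $O(\frac{\log\log k}{\log k})$ as stated here, and reconcile the two (the former is the stronger/correct one, since $t/k \to 0$ kills the relative error).

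The main obstacle is the asymptotic analysis in part (2): turning the transcendental equation $\lambda^k - \lambda^{k-1} = 1$ into sharp enough two-sided bounds requires carefully tracking the error in the approximation $(1 + t/k)^{k-1} \approx e^t$ and ensuring the chosen test values of $\lambda$ straddle the root — the $-\frac{2\log\log k}{k}$ correction term in the lower bound of \eqref{3030} is exactly calibrated to absorb the discrepancy between $W(k)$ and $\log k$, and getting the constant $2$ (rather than $1$) right is where the care is needed. Everything else — the graph construction, the characteristic polynomial, and the invocation of Proposition \ref{pr22a} — is routine.
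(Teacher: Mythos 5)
Your proposal is correct and follows essentially the same route as the paper: Algorithm A produces the loop-plus-$k$-cycle graph on the $k$ repunit-labeled vertices (this is the paper's Proposition \ref{pr42a}), the characteristic polynomial $\lambda^k-\lambda^{k-1}-1$ is read off, and the two-sided bound \eqref{3030} is proved exactly as you describe, by checking the sign of $p_k$ at the test values $1+\frac{\log k}{k}$ and $1+\frac{\log k - 2\log\log k}{k}$ (your Lambert-$W$ discussion is just motivation for those choices). Your side remark about the error term is also well taken: the bound \eqref{3030} yields the stronger form $O\bigl(\frac{\log\log k}{k}\bigr)$ appearing in Theorem \ref{th17a}(3), and the $O\bigl(\frac{\log\log k}{\log k}\bigr)$ written in \eqref{hdL} appears to be a typographical slip.
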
 

%
%

\begin{minipage}{\linewidth}
\begin{center}
\begin{tabular}{|c | r | r |}
\hline
 \mbox{Path set} & \mbox{Perron eigenvalue} &  \mbox{Hausdorff dim} \vline \\
\hline
 $\mathcal{C}(1,L_1)$ &   $2.000000$ & $ 0.630929$ \\
$\mathcal{C}(1,L_2)$ &   $1.618033$ & $ 0.438018$ \\
$\mathcal{C}(1,L_3)$ &  $1.465571$  &   $0.347934$  \\ 
$\mathcal{C}(1,L_4)$ &  $1.380278$  & $0.293358$  \\ 
$\mathcal{C}(1,L_5)$ &  $1.324718$  & $0.255960$   \\ 
$\mathcal{C}(1,L_6)$ &   $1.285199$    & $0.228392$ \\
$\mathcal{C}(1,L_7)$ & $1.255423$ & $0.207052$ \\
$\mathcal{C}(1,L_8)$ &  $1.232055$ & $0.189948$ \\ 
$\mathcal{C}(1,L_9)$ &  $1.213150$ & $0.175877$ \\ \hline
\end{tabular} \par
\bigskip
\hskip 0.5in {\rm TABLE 4.1.}  Hausdorff dimensions of $\mathcal{C}(1,L_k)$ (to six decimal places)
\newline
\newline
\end{center}
\end{minipage}

We first analyze the structure of the directed graph $(\mathcal{G}, v_0)$ 
in this presentation.


 \begin{prop} \label{pr42a}
   For $L_k= (1^k)_3 = \frac{1}{2}(3^k -1)$ the   path set $\mathcal{C}(1,L_k)$ has a presentation $(\sG, v_0)$  
   given by Algorithm A 
      which has exactly  $k$ vertices. The vertices  
   $v_m$ have  labels $m=0$ and $m =(1^j)_3$, for $1 \le j \le k-1$.  The underlying directed graph $G$ is strongly connected
   and primitive.
   \end{prop}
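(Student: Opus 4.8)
The plan is to run Algorithm A explicitly on the integer $M = L_k = (1^k)_3$ and identify precisely which vertices are reached. Recall from Theorem \ref{th31n} that vertices carry labels $N$ with $0 \le N \le \lfloor M/2 \rfloor$, that from a vertex labeled $N$ the admissible exit-edge labels $a \in \{0,1\}$ are those with $a + N \equiv 0, 1 \pmod 3$, and that such an edge leads to the vertex labeled $N' = \lfloor (N + Ma)/3 \rfloor$. First I would compute the $a=1$ update at $N=0$: since $M = L_k \equiv 1 \pmod 3$, $N' = \lfloor L_k/3 \rfloor$. Because $L_k = (1^k)_3 = 1 + 3 + \cdots + 3^{k-1}$, we get $\lfloor L_k/3 \rfloor = 1 + 3 + \cdots + 3^{k-2} = L_{k-1} = (1^{k-1})_3$. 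This suggests the invariant: the reachable vertices are exactly $v_0$ together with $v_{L_j}$ for $1 \le j \le k-1$, giving $k$ vertices total.

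Next I would verify this set is closed under the update rule and track the edges. At a vertex $v_{L_j}$ with $1 \le j \le k-1$, the bottom $3$-adic digit of $L_j$ is $1$ (since $L_j = (1^j)_3$), so the only admissible exit label is $a = 0$, giving $N' = \lfloor L_j/3 \rfloor = L_{j-1}$ (interpreting $L_0 = 0$). At $v_0$ the admissible labels are $a=0$ (self-loop, $N'=0$) and $a=1$ (edge to $v_{L_{k-1}}$, computed above). So the graph is: a self-loop labeled $0$ at $v_0$; an edge labeled $1$ from $v_0$ to $v_{L_{k-1}}$; and a chain of edges labeled $0$ running $v_{L_{k-1}} \to v_{L_{k-2}} \to \cdots \to v_{L_1} \to v_0$. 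I should double-check the boundary case $j=1$: $L_1 = (1)_3 = 1$, and $\lfloor 1/3 \rfloor = 0$, so indeed $v_{L_1} \to v_0$. I should also confirm no vertex label collisions occur, i.e. that $0, L_1, \ldots, L_{k-1}$ are $k$ distinct nonnegative integers, which is clear since $L_j$ is strictly increasing in $j$, and that all lie in $[0, \lfloor L_k/2 \rfloor]$, which holds because $L_{k-1} = \frac{1}{2}(3^{k-1}-1) \le \frac{1}{2}(3^k-1)/2$ roughly — a routine inequality.

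It remains to establish strong connectedness and primitivity. Strong connectedness is immediate from the description: from any $v_{L_j}$ one reaches $v_0$ along the $0$-labeled chain, and from $v_0$ one reaches every $v_{L_j}$ by taking the $1$-edge to $v_{L_{k-1}}$ and then descending the chain. For primitivity (in the sense recalled in Section \ref{sec21}, i.e. strongly connected and aperiodic), I would exhibit two directed cycles through $v_0$ of coprime lengths: the self-loop at $v_0$ has length $1$, so the $\gcd$ of all cycle lengths is $1$, hence the graph is aperiodic, hence primitive. The main obstacle — though it is more bookkeeping than genuine difficulty — is making the inductive claim about reachable vertices airtight, namely proving by induction on the number of algorithm steps that no vertex outside $\{0, L_1, \ldots, L_{k-1}\}$ is ever created; the computation $\lfloor L_j/3 \rfloor = L_{j-1}$ together with the single branch at $v_0$ makes this transparent, so the argument should close cleanly.
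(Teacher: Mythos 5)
Your proposal is correct and follows essentially the same route as the paper: both identify the graph as a self-loop labeled $0$ at $v_0$ together with a directed $k$-cycle $v_0 \to v_{L_{k-1}} \to \cdots \to v_{L_1} \to v_0$ whose only $1$-labeled edge is the first, and both deduce strong connectivity from the cycle and primitivity from the length-one loop at $v_0$. Your write-up merely supplies more of the underlying arithmetic (the identity $\lfloor L_j/3\rfloor = L_{j-1}$ and the admissibility check at each vertex) than the paper's terser argument.
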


\begin{proof} The presentation $(\mathcal{G},v_0)$ of $\mathcal{C}(1,L_k)$ has an underlying directed  graph $G$
having $k$ vertices $V_n$ with $N= 0$ and $N= (1^j)_3$ for $1 \le j \le k-1$. The vertex $v_0$ has two exit edges
labeled $0$ and $1$, and all other vertices have a unique exit edge labeled $0$.
The edges form a self-loop 
at $0$ labeled $0$, and a directed $k$-cycle, whose vertex labels are 
$$
0 \to (1^{k-1})_3 \to (1^{k-2})_3 \to \cdots(1^2)_3  \to (1)_3 \to 0, 
$$
This  cycle certifies strong connectivity of the graph $G$,
and in it all edge labels are $0$ except the edge $0 \to (1^{k-1})_3$
labeled $1$. Primitivity follows because it has a cycle of length $1$ at vertex $(0)_3$.
\end{proof}

\begin{proof}[Proof of Theorem \ref{th32}]
 (1) By appropriate ordering of the vertices, the adjacency matrix $\bf{A}$ of $\mathcal{G}$ is the $k \times k$ matrix
\begin{equation*}
\bf{A} = \left(\begin{array}{ccccc}
1 & 1 & 0 &  \ldots &0\\
0 & 0 & 1 &  \ddots  & \vdots\\
\vdots & \vdots & \ddots & \ddots & 0 \\
0 & 0 & \ldots &  0 & 1 \\
1 & 0 & \ldots&  0  & 0
\end{array}\right).
\end{equation*}
The characteristic polynomial of this matrix is 
\[
p_k(\lambda) :=\det ( \lambda\bf{I} - \bf{A}) =  \det \left(\begin{array}{ccccc}
\lambda - 1 & -1 & 0 & \ldots & 0 \\
0 & \lambda & -1 &\ddots & \vdots \\
\vdots & \vdots & \ddots & \ddots & 0 \\
0 &  0 & \ldots & \lambda & -1 \\
-1 & 0 & \ldots & 0 & \lambda
\end{array}\right). 
\]
Expansion of this determinant  by minors on the first column  yields
\begin{align}
p_k(\lambda)  
= (\lambda - 1) \lambda^{k-1} + (-1)^{k-1} (-1) (-1)^{k-1}
= \lambda^k - \lambda^{k-1} - 1.
\end{align}
The Perron eigenvalue of the nonnegative matrix $\bf{A}$ will be a positive real root $\alpha_k \geq 1$ of $p(\lambda)$. 
By \eqref{top-entropy}  the topological entropy of the path set $X(1, L_k)$
associated to $C(1, L_k)$ is $\log \beta_k$,
while by Proposition \ref{pr22a} the Hausdorff dimension of the $3$-adic path set fractal $C(1,L_k)$ itself is $\log_3 \beta_k$

(2) We estimate the size of $\beta_k$.  There is at most one real root $\beta_k \ge 1$ since 
for $\lambda > 1-1/k$ one has 
\begin{eqnarray*}
p_k^{'}(\lambda) &= & k \lambda^{k-1} - (k-1)\lambda^{k-2} 
= \lambda^{k-2} (k\lambda - (k-1))  > 0.
\end{eqnarray*}

For the lower bound, we consider $p_k(\lambda)$ for $\lambda >1$
and define variables $y >0$ by $\lambda= 1+ \frac{y}{k}$ with $y>0$, and  $x := \lambda^k>1$, noting that
$
w= \lambda^k = (1+\frac{y}{k})^k < e^y
$
Now 
\[ 
\lambda^{k-1}+1 = \frac{x}{1+ \frac{y}{k}} +1 \ge x\left(1- \frac{y}{k}\right) +1 \ge x+\left(1- \frac{xy}{k}\right),
\]
which exceeds $x$ whenever $xy \le k$. Thus we have $p_k(1+ \frac{y}{k}) < 0$
whenever $xy < ye^y \le k$. The choice $y =\log k - 2 \log\log k$ gives, for $k  \ge 3$, 
\[
y e^y \le \log k (e^{\log k - 2 \log\log k}) \le \frac{k}{\log k} \le k.
\]
Thus we have, for $k \ge 3$,  $p_k( 1+ \frac{\log k}{k}- 2\frac{\log\log k}{k}) <0$, so
\[ 
\beta_k \ge 1+ \frac{\log(k)}{k}- 2\frac{\log\log k}{k},
\]
which is the lower bound in \eqref{3030}. 
For the upper bound, it suffices to  show $p_k (1+ \frac{\log k}{k}) >0$ for $k \ge 6$.
We wish to show $ (1+ \frac{\log k}{k})^{k-1} (\frac{\log  k}{k}) >1$ for $k \ge 6$. This becomes
$(1+ \frac{\log k}{k})^{k-1} > \frac{k}{\log k}$, and on taking logarithms requires
$$
(\log k -1)\log (1 + \frac{\log k}{k}) > \log k - \log\log k.
$$
Using the approximation $\log (1+x) \ge x - \frac{1}{2} x^2$ valid for $0< x< 1,$
we verify this inequality  holds for $k \ge 6$, and the upper bound in \eqref{3030} follows.
The asymptotic estimate  \eqref{hdL}  for the  Hausdorff dimension  of $\mathcal{C}(1, L_k)$
immediately follows by taking logarithms
to base $3$ of the estimates above.

\end{proof}

The results above imply Theorem \ref{th17a} in the introduction.

\begin{proof}[Proof of Theorem \ref{th17a}.]
Assertion  (1) follows from  
Proposition \ref{pr42a}.
Assertions  (2) and (3)  follow from Theorem  \ref{th32}.
\end{proof}

%
%
%

\subsection{The family $N_k= (10^{k-1}1)_3=3^k+1$.}\label{sec43}

We prove the following result.


\begin{thm}\label{th34}
 For every integer $k \geq 0$, and $N_k=3^k+1=(10^{k-1}1)_3$, 
 \begin{equation}
\dim_H(\mathcal{C}(1,N_k)) =  \dim_{H} \mathcal{C}(1, (10^{k-1}1)_3)
= \log_3\bigg(\frac{1 + \sqrt{5}}{2}\bigg) \approx 0.438018.
\end{equation}
\end{thm}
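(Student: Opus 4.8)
The plan is to apply Algorithm A to compute the path set presentation $(\sG, v_0)$ of $X(1, N_k)$ for $N_k = 3^k+1$, exhibit the explicit structure of the underlying graph $G$ on $2^k$ vertices, and then compute the Perron eigenvalue of its adjacency matrix directly. First I would run the update rule $N' = \lfloor (N + N_k a)/3 \rfloor$ with admissibility condition $a + N \equiv 0,1 \pmod 3$, starting from $v_0$. Since $N_k \equiv 1 \pmod 3$ and $N_k = 3^k + 1$, the arithmetic of the carry $N$ interacts with the digit stream $(a_j)$ only through a window of width $k$: writing $N$ in base $3$, the top digit of $N$ is always $0$ or $1$ (it records a digit of $N_k\alpha$ not yet finalized), and the label $m$ of each reachable vertex turns out to be exactly a binary string of length at most $k$, so there are at most $2^k$ vertices. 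I would verify reachability of all $2^k$ such vertices and confirm the graph is strongly connected, proving part (1) of Theorem \ref{th14} along the way.

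The key structural claim to isolate is that, after identifying vertices with binary words $w$ of length $k$, the transition structure is that of a ``shift register'': from a vertex carrying word $w = w_{k-1}\cdots w_1 w_0$, the admissible next digits $a\in\{0,1\}$ are governed by $w_0$ via \eqref{allowable} (so $a$ is forced when $w_0 = 1$, free when $w_0 = 0$, noting $w_0 = 2$ cannot occur here because $N_k$'s lowest nonzero digit is $1$), and the successor word is obtained by dropping $w_0$, shifting, and inserting a new top bit determined by $a$ plus the carry-out, which because $N_k = 3^k+1$ only involves $w_{k-1}$ and $a$. The upshot is that the adjacency matrix $\mathbf{A}$, in a suitable ordering of the $2^k$ vertices, has a self-similar block structure: it decomposes according to whether $w_0 = 0$ (two outgoing edges) or $w_0 = 1$ (one outgoing edge), and one can show by induction on $k$ that the characteristic polynomial of $\mathbf{A}$ has $\lambda^2 - \lambda - 1$ as its only ``interesting'' factor, the rest being powers of $\lambda$ contributed by transient or nilpotent-on-a-subspace pieces. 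Equivalently — and this is probably the cleanest route — I would show directly that the vector counting admissible length-$n$ prefixes satisfies the Fibonacci recursion $N_{n+1} = N_n + N_{n-1}$ for $n$ large, because the constraint ``$a + (\text{current bit}) \in\{0,1\}$'' is exactly the rule defining the golden-mean shift (no two consecutive $1$'s, up to relabeling), independently of $k$.

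The main obstacle I anticipate is making precise why the parameter $k$ washes out entirely: a priori the graph has $2^k$ vertices and its spectrum could depend on $k$, so I must pin down that the ``no two consecutive forced digits clash'' phenomenon is genuinely $k$-independent. Concretely, I would compute $\mathcal{C}(1, N_1)$ and $\mathcal{C}(1,N_2)$ by hand (the latter is Example \ref{example32}, where $7 = N_1$... actually $N_1 = 4$, so I would do the small cases afresh) to see the pattern, then prove the shift-register description rigorously, and finally invoke \eqref{top-entropy} together with Proposition \ref{pr22a}: the topological entropy is $\log\beta$ with $\beta$ the Perron eigenvalue, and the structural analysis forces $\beta = \tfrac{1+\sqrt5}{2}$ for every $k\ge 1$ (the case $k=0$, where $N_0 = 2 \equiv 2\pmod 3$, is degenerate and handled by the convention $\mathcal{C}(1,2) = \{0\}$, or else interpreted so the stated formula still reads correctly). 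Hence
\[
\dim_H(\mathcal{C}(1,N_k)) = \log_3\beta = \log_3\!\left(\frac{1+\sqrt5}{2}\right),
\]
as claimed. The remaining bookkeeping — that all $2^k$ vertices are reachable and the graph is strongly connected, giving Theorem \ref{th14}(1) — I would package into the companion Proposition \ref{pr45} cited in the introduction.
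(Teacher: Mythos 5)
Your setup — running Algorithm A, identifying the $2^k$ vertices with binary words of length $k$, and reading off the shift-register transition structure — is exactly what the paper does in Propositions \ref{pr45} and \ref{pr47}. But the step where you actually pin down $\beta=\tfrac{1+\sqrt5}{2}$ rests on two claims that are both false for $k\ge 2$, and this is a genuine gap. Take $k=2$, $N_2=10=(101)_3$: the four vertices are $m\in\{0,3,1,4\}$ with edges $0\to 0$ (label $0$), $0\to 3$ (label $1$), $3\to 1$ (label $0$), $3\to 4$ (label $1$), $4\to 1$ (label $0$), $1\to 0$ (label $0$). The first-return words at $v_0$ are $0$, $100$, and $1100$; in particular $1100$ is admissible, so the path set is \emph{not} the golden-mean shift up to any relabeling, and the number of admissible length-$n$ prefixes is $1,2,4,6,9,15,25,\dots$, which visibly violates the Fibonacci recursion ($6+4\ne 9$, $15+9\ne 25$). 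Correspondingly, the characteristic polynomial of the adjacency matrix is $\lambda^4-\lambda^3-\lambda-1=(\lambda^2-\lambda-1)(\lambda^2+1)$: the complementary factor is $\lambda^2+1$, with roots on the unit circle, not a power of $\lambda$, and there are no transient pieces since the graph is strongly connected. So neither of your two proposed finishing arguments goes through; the parameter $k$ does \emph{not} wash out of the spectrum, only out of the spectral radius.

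The paper closes this gap differently: it orders the $2^k$ vertices so that $\mathbf{A}_k$ has a recursive block structure ($T_{k+1}$ and $B_{k+1}$ are block-diagonal copies of $T_k$ and $B_k$), and then \emph{guesses and verifies} an explicit everywhere-positive eigenvector, defined recursively by $\bv_1=(\phi,1)$ and $\bv_j=(\phi\,\bv_{j-1},\bv_{j-1})$ with $\phi=\tfrac{1+\sqrt5}{2}$, satisfying $\mathbf{A}_k\bv_k^T=\phi\,\bv_k^T$. Since the graph is strongly connected (so $\mathbf{A}_k$ is irreducible), the Perron--Frobenius theorem forces any positive eigenvector to belong to the Perron eigenvalue, giving $\beta=\phi$ for all $k$ without ever computing the full characteristic polynomial. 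If you want to salvage your plan, replace the golden-mean-shift/Fibonacci argument with this eigenvector verification (or, alternatively, with a first-return-word analysis at $v_0$ showing $\sum_w \phi^{-|w|}=1$, which also requires real work for general $k$).
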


  To prove this result we first characterize the presentation $\mathcal{G}= (G, v_0)$
  associated to $N_k$ by the construction of Theorem \ref{th31n}.
  

  \begin{prop} \label{pr45}
   For $N_k= 3^k +1$ the   path set $\mathcal{C}(1,N_k)$ has a presentation $\sG =(G, v_0)$  
   given by Algorithm A  with the following properties.  
   
   (1) The vertices  
   $v_m$ have  labels $m$  that comprise those integers  $0 \le m \le \frac{1}{2}(3^k-1)$ whose $3$-adic expansion $(m)_3$
   omits the digit $2$. 
   
   (2) The directed graph $G$ has exactly $2^k$ vertices. 
   
   (3) The directed graph $G$ is strongly connected and primitive.
   \end{prop}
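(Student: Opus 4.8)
The plan is to extract the structure of the graph $G$ produced by Algorithm A by specializing the update rule \eqref{update-step} to $M = N_k = 3^k + 1$ and observing that this particular $M$ makes the vertex label evolve like a length-$k$ binary shift register. First I would record the local transition data at a vertex labeled $N$ whose bottom $3$-adic digit is $d$: by \eqref{allowable} the edge $a = 0$ is admissible iff $d \in \{0,1\}$ and, by \eqref{update-step}, leads to $N' = \lfloor N/3 \rfloor$ (it deletes the bottom digit); the edge $a = 1$ is admissible iff $d \in \{0,2\}$ and, using $3^k + 1 = 3 \cdot 3^{k-1} + 1$, leads to $N' = \lfloor N/3 \rfloor + 3^{k-1} + \lfloor (d+1)/3 \rfloor$. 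The observation to isolate is that as long as we stay among labels whose $3$-adic expansion omits the digit $2$ (so $d \in \{0,1\}$), every admissible transition deletes the bottom digit, and the $a = 1$ transition additionally inserts a digit $1$ in position $k-1$; hence the admissible labels stay inside $S_k := \{\, 0 \le m \le (1^k)_3 = \frac{1}{2}(3^k - 1) : (m)_3 \text{ omits } 2\,\}$, a set of size $2^k$.

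Next I would prove that the vertex set of $G$ equals $S_k$, which gives parts (1) and (2). The inclusion ``reachable $\subseteq S_k$'' is a direct induction on path length from the starting label $0$, using the two transition formulas above. For the reverse inclusion, given $m \in S_k$ with $(m)_3 = (c_{k-1} \cdots c_0)_3$ (each $c_i \in \{0,1\}$), I would walk out of $v_0$ along the input word $c_0 c_1 \cdots c_{k-1}$ and verify by induction that after $n \le k$ steps the current label is $3^{k-n} \sum_{j<n} c_j 3^j$; for $n < k$ this number is divisible by $3$, so its bottom digit is $0$ and both exit edges are present, which makes the chosen word a legal path, and after $k$ steps it terminates at the label $\sum_{j<k} c_j 3^j = m$. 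Thus every element of $S_k$ is the label of a reachable vertex.

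For part (3), strong connectivity follows by combining this reachability with the dual observation that from any label in $S_k$ (bottom digit $0$ or $1$) the edge $a = 0$ is always available, and applying it $k$ consecutive times shifts the length-$k$ expansion out and returns to the label $0$; hence every vertex is both reachable from $v_0$ and reaches $v_0$. Primitivity then follows from strong connectivity together with the self-loop $0 \to 0$ at $v_0$ (since $\lfloor 0/3 \rfloor = 0$), which gives a directed cycle of length $1$, so the greatest common divisor of the cycle lengths is $1$.

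I expect the only genuine obstacle to be the transient regime $n < k$ in the reachability argument, where the shift register is not yet ``full'': one must confirm that during those initial steps the bottom digit of the label is forced to $0$, so that the $a = 1$ edges stay available and every length-$k$ binary word is realizable as a path. The division identities and the inductions themselves are routine, and the small case $k = 1$ (where there is essentially no transient) is handled by the same formulas.
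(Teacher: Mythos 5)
Your proposal is correct and follows essentially the same route as the paper: both identify the Algorithm A transitions for $M = 3^k+1$ as a length-$k$ binary shift register (edge $0$ deletes the bottom digit, edge $1$ deletes it and inserts a $1$ in position $k-1$), conclude that the vertex set is exactly the $2^k$ labels $m \le \frac12(3^k-1)$ omitting the digit $2$, and obtain strong connectivity from reachability of every such label together with the return path of $k$ consecutive $0$-edges, with primitivity coming from the self-loop at $v_0$. The only cosmetic difference is that you reach $v_m$ by a path of length exactly $k$ (feeding in leading zero inputs), whereas the paper starts the walk at the lowest nonzero digit of $m$; both verifications rest on the same observation that the intermediate labels are divisible by $3$, so both exit edges remain available.
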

 
 \begin{proof}
(1) Any vertex $v_m$ reachable from $v_0$ has a $3$-adic expansion (equivalently ternary  expansion) $(m)_3$ 
 that omits the digit $2$, and has at most $k$ $3$-adic digits.
 This is proved by induction on the number of steps $n$ taken. The base case has the node $(0)_3$. For the induction step, 
 every vertex in the graph has an exit edge labeled $0$, and vertices with labels $m \equiv 0 ~(\bmod \, 3)$ also have an exit edge
 labeled $1$. The exit edges labeled $0$ map $m= (b_{k-1} b_{k-2} \cdots b_{1}b_{0})_3 $ to
  $m'= (0 b_{k-1} b_{k-2} \cdots b_{2}{b_1})_3$. The exit edges labeled $1$ map $m$ to $m' = (1 b_{k-1} b_{k-2} \cdots b_2 b_1)_3$. 
 For both types of exit edges the new vertex reached at the next step omits the digit $2$ from its $3$-adic expansion,
 completing the induction step.  

 (2) There are exactly $2^k$ possible such vertex labels $m$ in which $(m)_3$ omits the digit $2$.
 Call such vertex labels {\em admissible}. The largest such $m = \frac{1}{2}(3^k -1).$
 
 (3)  To show the graph $G_k$ is strongly connected it suffices to  establish  that:
\begin{enumerate}
\item[(R1)] Every possible such vertex l $v_m$ with admissible
label $m$ is reachable by a directed path in $G$
from the initial vertex $0 = (00\cdots 0)_3$.
\item[(R2)] All admissible vertices $v_m$ have a directed path in $G$ from $v_m$  to $v_0$.
\end{enumerate}
Note that (R1), (R2) together imply that  $G$ is strongly connected.
To show (R1), write  $m = (b_{k-1} \cdots b_0)_3$, with all $b_j =0$ or $1$, and let $i$ be the smallest index
with $b_i=1$.  Starting from $v_0$, we may add  a directed series of exit edges labeled in order
$b_i, b_{i+1}, b_{i+2}, \cdots,  b_{k-1}$ to arrive at $v_m$. Such edges exist in $G$, because  all intermediate vertices
$v_{m'}$ reached along  this path have $m' \equiv 0 ~(\bmod 3)$ so that an exit edges labeled both $0$ and  $1$
are available at that step. Indeed, the $j$-th step in the path has $(m_j)_3$ having $k-j$ initial
$3$-adic digits of $0$, and $k-1-i \leq k-1$.

To show (R2) we observe that for any vertex  $v_m$ following a path of  exit edges all labeled $0$
will eventually arrive at the vertex $v_0$. This is permissible since $(m)_3$ has all digits $0$ or $1$. 

Now $G_k$ is strongly connected, and it is primitive since it has a loop at vertex $0$.
This completes the proof. 
\end{proof}


To obtain an adjacency matrix for this graph, we must choose a suitable ordering of the vertex labels. Order the vertices of $\mathcal{G}$ recursively as follows:  the $(0^{k-1})_3$-vertex is first $I_1$, and the $(10^{k-1})_3$-vertex is second $I_2$. Now, suppose that at step $j$ we have ordered the vertices $I_1,\ldots, I_m$, in that order, with $m=2^j$. Then for $1 \le j < k$, we assert that there will be precisely $2m$ vertices, all distinct from  $I_1,\ldots,I_m$, to which some $I_i$ has an out edge. We can label these $J_{11},J_{12},\ldots,J_{m1},J_{m2}$ so that $J_{i1}$ has an in-edge labeled 0 from $I_i$, and $J_{i2}$ has an in-edge labeled 1 from $I_i$. Assuming this assertion, at the $j$-th  step we expand our ordering to $I_1 \ldots, I_m,J_{11},J_{12},\ldots,J_{m1},J_{m2}$.

\begin{prop} \label{pr47} The ordering of the vertices above is valid, and the adjacency matrix $\bf{A}$ of the underlying graph $G$ of $\mathcal{G}$ is the following $2^k \times 2^k$ matrix $\mathbf{A} = (a_{ij})$:
\begin{equation*}
a_{ij} = \left\{
\begin{array}{rl}
1 & \text{if } 1 \le i \le 2^{k-1} \text{ and } j \in \{2i-1,2i\} ; \\
1 & \text{if } 2^{k-1} < 1 \text{ and } j = 2(i -2^{k-1})-1; \\
0 & \text{otherwise}.
\end{array}\right.
\end{equation*}
This description is consistent and exhaustive, characterizing $\mathbf{A}$.
\end{prop}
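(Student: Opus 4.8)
The plan is to verify Proposition \ref{pr47} by induction on the number of vertices already ordered, confirming at each stage of the recursive ordering that the claimed structure is forced by the edge rules established in Proposition \ref{pr45}. First I would record the two basic facts about edges in $G$ coming from the proof of Proposition \ref{pr45}: a vertex $v_m$ with $m=(b_{k-1}\cdots b_0)_3$ (all $b_j\in\{0,1\}$) always has an exit edge labeled $0$ going to $v_{m'}$ with $m'=(0\,b_{k-1}\cdots b_1)_3$, and has an exit edge labeled $1$ going to $v_{m''}$ with $m''=(1\,b_{k-1}\cdots b_1)_3$ \emph{exactly when} $b_0=0$, i.e. $m\equiv 0\ (\bmod\,3)$. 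In other words, the out-edge from $v_m$ deletes the bottom digit $b_0$, shifts the remaining digits down, and prepends the edge label as the new top digit; when $b_0=1$ only the label $0$ is admissible.

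Next I would make the ordering explicit enough to do bookkeeping. The claim is that after $j$ steps the ordered list $I_1,\ldots,I_{2^j}$ consists exactly of those admissible vertices $v_m$ whose bottom $j$ digits $b_0,\ldots,b_{j-1}$ are all $0$ — equivalently $m\equiv 0\ (\bmod\,3^j)$ — and moreover the ordering is by the reversed binary value of the top $k-j$ digits. I would prove by induction on $j$ that (a) this set has $2^{k-j}$ elements, doubling the index range as $j$ decreases... wait, rather: at step $j$ the list has length $2^j$; the set of vertices $v_m$ with $m\equiv 0\ (\bmod\,3^{k-j})$ has exactly $2^j$ elements since the free digits are the top $j$ digits. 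So $I_1,\ldots,I_{2^j}$ are the admissible $m$ divisible by $3^{k-j}$. For the inductive step I check the asserted claim in the statement: each such $I_i$ (being $\equiv 0\ (\bmod\,3)$, hence $\equiv 0\ (\bmod\,3^{k-j})$ with $k-j\ge 1$) has both exit edges, the two targets $J_{i1},J_{i2}$ have exactly one fewer trailing zero (namely $k-j-1$), so they lie in the step-$(j+1)$ set; distinctness of all $2^{j+1}$ targets follows because the map $m\mapsto(\text{label})\,b_{k-1}\cdots b_1$ on vertices with $m\equiv 0\ (\bmod\,3^{k-j})$ is injective (the preimage is recovered by deleting the new top digit and appending a $0$); and surjectivity onto the step-$(j+1)$ set holds because every $m'$ with exactly $k-j-1$ trailing zeros is $(\epsilon\,b_{k-1}\cdots b_1)_3$ for a unique admissible $m=(b_{k-1}\cdots b_1 0)_3$ divisible by $3^{k-j}$ and $\epsilon\in\{0,1\}$.

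Once the ordering is validated, the adjacency matrix formula is just a transcription. With the convention that $I_i$ for $1\le i\le 2^{k-1}$ are exactly the vertices with two out-edges (those divisible by $3$, which after $k-1$ steps are the first $2^{k-1}$ in the list) and $I_i$ for $2^{k-1}<i\le 2^k$ are the vertices with a single out-edge labeled $0$, I would check that the recursive expansion at the final step $j=k-1$ places $J_{i1}$ in position $2i-1$ and $J_{i2}$ in position $2i$, so the row for $I_i$ ($i\le 2^{k-1}$) has ones exactly in columns $2i-1,2i$. For $i>2^{k-1}$, write $i=2^{k-1}+\ell$; such a vertex $v_m$ has $b_0=1$, and its unique target under the label-$0$ edge is $(0\,b_{k-1}\cdots b_1)_3$, which I must identify as $I_{2\ell-1}=I_{2(i-2^{k-1})-1}$, matching the stated formula. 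The main obstacle I anticipate is purely organizational: pinning down precisely the correspondence between the abstract recursive labels $I_i,J_{i1},J_{i2}$ and the concrete digit strings so that the index arithmetic ($2i-1$, $2i$, $2(i-2^{k-1})-1$) comes out exactly right, and making sure the "first $2^{k-1}$ entries are exactly the vertices divisible by $3$" claim is correctly extracted from the recursion rather than merely plausible. The underlying graph-theoretic facts are all immediate from Proposition \ref{pr45}; the work is in the combinatorial indexing.
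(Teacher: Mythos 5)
Your plan is correct and follows essentially the same route as the paper's proof: derive the edge rule ``drop the bottom ternary digit and prepend the edge label,'' show the recursive ordering lists vertices by decreasing number of trailing zeros (equivalently, by reversed binary value of the digit string), and then read off the index arithmetic $I_i \to I_{2i-1}, I_{2i}$ and $I_{2^{k-1}+\ell} \to I_{2\ell-1}$. Your explicit injectivity/counting argument for the shift-and-prepend map is a slightly cleaner justification of the step the paper asserts more tersely (and your parenthetical ``exactly one fewer trailing zero'' should read ``at least $k-j-1$ trailing zeros,'' though only membership in the step-$(j+1)$ set is used), but the substance is identical.
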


To illustrate this, we have for $k =2$

\begin{equation*}
\bf{A} = \left(\begin{array}{cccc}
1 & 1 & 0 & 0 \\
0 & 0 & 1 & 1 \\
1 & 0 & 0 & 0 \\
0 & 0 & 1 & 0 \\
\end{array}\right),
\end{equation*}

while for $k = 3$ we have

\begin{equation*}
\bf{A} = \left(\begin{array}{cccccccc}
1 & 1 & 0 & 0 & 0 & 0 & 0 & 0 \\
0 & 0 & 1 & 1 & 0 & 0 & 0 & 0 \\
0 & 0 & 0 & 0 & 1 & 1 & 0 & 0 \\
0 & 0 & 0 & 0 & 0 & 0 & 1 & 1 \\
1 & 0 & 0 & 0 & 0 & 0 & 0 & 0 \\
0 & 0 & 1 & 0 & 0 & 0 & 0 & 0 \\
0 & 0 & 0 & 0 & 1 & 0 & 0 & 0 \\
0 & 0 & 0 & 0 & 0 & 0 & 1 & 0 \\
\end{array}\right).
\end{equation*}

\begin{proof} First, we address the ordering of the vertices of $\mathcal{G}$. 
According to the prescription of the proposition, $I_1 = (0)_3$, $I_2 = (10^{k-1})_3$. In the next step, 
there is an out-edge labeled $1$ from vertex $(10^{k-1})_3$ to $(110^{k-2})_3$, and an out-edge 
labeled $0$ from vertex $(10^{k-1})_3$ to vertex $(10^{k-2})_3$. 
This gives $I_3 = (110^{k-2})_3$, $I_4 = (10^{k-2})_3$. In general, for $k_1 + \cdots + k_r < k$ all nonnegative,
 if we have a vertex 
$(1^{k_1} 0^{k_2} 1^{k_3} \cdots 1^{k_r} 0^{k  -\Sigma k_i})_3$,
 it has an out-edge labeled $1$ to a vertex $(1^{k_1 + 1} 0^{k_2} 1^{k_3} \cdots 1^{k_r} 0^{k - 1-\Sigma k_i})_3$ 
 and an out-edged labeled $0$ to a vertex $(1^{k_1} 0^{k_2} 1^{k_3} \cdots 1^{k_r} 0^{k - 1-\Sigma k_i})_3$. 
 On the other hand, a vertex labeled $(1^{k_1} 0^{k_2} 1^{k_3} \cdots 1^{k_r})_3$ ending in $1$ has a single out-edge labeled $0$ to the vertex $(1^{k_1} 0^{k_2} 1^{k_3} \cdots 1^{k_r -1})_3$.

Thus, if an edge-walk originating at the $0$-vertex has label $(e_r e_{r-1} \cdots e_1 )_3$, 
the terminal vertex of this edge walk is the vertex $(e_r e_{r-1} \cdots e_1 0^{k -r})_3$. 
Now, for any vertex ending in $0$, edges labeled $0$ and $1$ are both admissible, 
which means that an edge walk labeled $e_1 e_2 \cdots e_{k}$ is admissible for all values $e_j = 0$ or 
$e_j =1$ for all $1 \leq j \leq k$. But this, then, says that all possible vertex labels from 
$\{0,1\}^{k}$ are achieved. 
Moreover, we showed above that a vertex with label from $\{0,1\}^{k}$ has out-edges only to other 
vertices labeled from $\{0,1\}^{k}$, so this is precisely the set of vertices of $\mathcal{G}$. The $r^{th}$ step of 
the vertex ordering procedure adds precisely those vertices which end in $0^{k -r}$, of which there are 
$2^{r-1} = 2 \cdot 2^{r-2}$. The procedure ends at the $k$th step with those vertices which end in $1$. 
In all, there are $2^{k}$ vertices, one for each label from $\{0,1\}^{k}$.

Now we can understand the definition of the coefficients $a_{ij}$ of the adjacency matrix $\mathbf{A}$ of the
 underlying graph $G$ of $\mathcal{G}$. Vertex $(0)_3$ maps into itself and vertex $(10^k)_3$, 
 which are ordered first and second with respect to the ordering. Thus $a_{11} = a_{12} = 1$, $a_{1j} = 0$ for $j > 2$. 
 Now suppose a vertex is ordered $i^th$ ($I_i$) at the $r^{th}$ stage, and $r \leq k-1$, 
 so that not all vertices have yet been ordered. There are $2^r$ vertices ordered so far 
 (so $1 \leq i \leq 2^r$), and the $(r+1)^{st}$ stage of the construction orders the next $2^r$
  vertices precisely so that the out-edges from vertex $I_i$ go to vertices $I_{2i-1}$ and $I_{2i}$.
   This gives the prescription for $a_{ij}$ for $1 \leq i \leq 2^{k-1}$.

Observe that the vertices $I_{2^{k-1}+1}, I_{2^{k-1} + 2}, \ldots, I_{2^k}$ have labels ending in $1$. 
Hence, such a vertex labeled $m$ has a single out-edge to the vertex labeled $(m-1)/3$. 
But if $m$ is the label of $I_{2^{k-1}+r}$, then $(m-1)/3$ is the label of $I_{2r-1}$. 
But $(2^{k-1}+r,2r-1)$ can be rewritten $(i,2(i-2^{k-1})-1)$. This gives the result.
\end{proof}

We are now ready to prove Theorem ~\ref{th34}.
\begin{proof}[Proof of Theorem ~\ref{th34}]
Let $\mathbf{A}_k$ be the adjacency matrix of the presentation of $\mathcal{C}(1,N_k)$ constructed via our algorithm. 
We directly find
a strictly positive eigenvector $\bv_k$ of $\mathbf{A}_k$
having  $\mathbf{A}_k\bv_k^T$ = $(\frac{1+\sqrt{5}}{2})\bv_k^T$. 
Here $\bv_k$  is a $ 2^k \times 1$ row vector, with $v_K^T$ its transpose,
and  let $\bv_k^{(j)} $ denote its $j$-th  entry.
The  Perron-Frobenius Theorem \cite[Theorem 4.2.3]{LM95}
then
 implies  that $\alpha = \frac{1+\sqrt{5}}{2}$ is the Perron eigenvalue of $\mathbf{A}_k$.
  Theorem ~\ref{th12}
   will then give us that 
\[\dim_H(\mathcal{C}(1,N_k)) =\log_3 \bigg(\frac{1+\sqrt{5}}{2}\bigg).\]
Let $\phi = \frac{1+ \sqrt{5}}{2}$ be the golden ratio. We define the vector $\bv_k$ recursively as follows:
\begin{enumerate}
\item[(1)] $\bv_1 = (\phi,1) = (\phi^1,\phi^0)$;
\item[(2)] If $\bv_{j-1} = (\phi^{k_1},\phi^{k_2},\ldots , \phi^{k_{2^{j-1}}})$, then
\[ 
\bv_j = (\phi^{k_1+1},\phi^{k_2+1},\ldots,\phi^{k_{2^{j-1}}+1},\phi^{k_1},\phi^{k_2},\ldots , \phi^{k_{2^{j-1}}}).
\]
\end{enumerate}
Note that $\bv_j$  is obtained from $\bv_{j-1}$ by adjoining $\phi \bv_{j-1}$ to the front of $\bv_j$.

 We need now to check that
 $\mathbf{A} \bv_k^T= \phi \bv_k^T$. We will argue by induction on $k$. The base case is easy. Now observe that if we write 
\begin{equation*}
\bf{A}_k = \left(\begin{array}{c}
T_k  \\
B_k \\
\end{array}\right)
\end{equation*}
for $T_k$ and $B_k$ each $2^{k-1} \times 2^k$ blocks, then we have
\begin{equation*}
B_{k+1} = \left(\begin{array}{cc}
B_k & 0 \\
0 & B_k \\
\end{array}\right)
\end{equation*}
and
\begin{equation*}
T_{k+1} = \left(\begin{array}{cc}
T_k & 0 \\
0 & T_k \\
\end{array}\right).
\end{equation*}
 
It follows easily from this and the definition of the vectors $\bv_k$ that if $\mathbf{A}_k \bv_k^T = \phi \bv_k^T$, then $\mathbf{A}_{k+1} \bv_{k+1}^T = \phi \bv_{k+1}^T$. This proves the theorem.

\end{proof}



\begin{proof}[Proof of Theorem \ref{th14}.]
Here (1) follows from  
Proposition \ref{pr45},
and (2) follows from Theorem  \ref{th34}.
\end{proof}

%
%
%

\subsection{Hausdorff dimension bounds for  $\mathcal{C}(1, M_1, ..., M_n)$ with $M_i$ in families }\label{sec45}

The path set structures of each of the three infinite families are compatible with
each other, as a function of $k$, so that the associated $\mathcal{C}(1, M_1, ..., M_n)$
all have positive Hausdorff dimension. We treat them separately.


\begin{thm} \label{thm-family1}
For the family $L_k= \frac{1}{2}( 3^{k} -1) = (1^k)_3$, 
for  $1 \leq k_1 < \ldots < k_n$, the pointed graph $\mathcal{G}(0,\ldots,0)$ of the path set 
$X(1, L_{k_1}, \cdots L_{k_m})$
associated to
 $\mathcal{C}(1,L_{k_1},\ldots,L_{k_n})$ 
 is isomorphic to the pointed graph $(\mathcal{G}_{k_n},0)$ presenting $\mathcal{C}(1,L_{k_n})$.
 In particular 
\begin{equation}
 \dim_H(\mathcal{C}(1,L_{k_1},\ldots,L_{k_n})) = \dim_H(\mathcal{C}(1,L_{k_n})).
\end{equation}
\end{thm}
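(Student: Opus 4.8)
The plan is to compute the subgraph of the label--product presentation of $X(1,L_{k_1},\ldots,L_{k_n})$ reachable from the marked vertex, and to recognize it as the presentation $(\mathcal{G}_{k_n},0)$ itself. Recall from Proposition~\ref{pr42a} that each $(\mathcal{G}_{k_i},v_0)$ has the $k_i$ vertices $0$ and $(1^j)_3$ for $1\le j\le k_i-1$: the vertex $0$ carries a self-loop labeled $0$ together with an edge labeled $1$ into $(1^{k_i-1})_3$, and every other vertex $(1^j)_3$ has the single exit edge $(1^j)_3\to(1^{j-1})_3$ labeled $0$ (with the convention $(1^0)_3=0$). Intuitively, $X(1,L_k)$ consists of the binary sequences whose $1$'s are pairwise at distance at least $k$, so the intersection over $i$ collapses to $X(1,L_{k_n})$; the graph--theoretic argument below makes this, and the pointed--graph isomorphism, precise. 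By \cite[Proposition~4.3]{AL12a} (i.e. Algorithm~B), the label product $\mathcal{G}_{k_1}\star\cdots\star\mathcal{G}_{k_n}$ with marked vertex $\mathbf{0}=(0,\ldots,0)$ presents $X(1,L_{k_1},\ldots,L_{k_n})$, so it suffices to describe the part of this product reachable from $\mathbf{0}$.

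First I would observe two things. A product vertex $(N_1,\ldots,N_n)$ has an exit edge labeled $1$ exactly when every $N_i=0$, i.e. only at $\mathbf{0}$, since in each factor $1$ can be read only at the vertex $0$; and every product vertex has precisely one exit edge labeled $0$, since each factor does. Hence, starting from $\mathbf{0}$, the only moves are the $0$-self-loop at $\mathbf{0}$ and the edge labeled $1$ from $\mathbf{0}$ to $w_0:=((1^{k_1-1})_3,\ldots,(1^{k_n-1})_3)$, after which one is forced to read $0$'s. Tracking each coordinate under the $0$-transition, and using that a coordinate which has reached $0$ stays there via its $0$-self-loop, shows that after reading $1$ followed by $j$ further $0$'s one arrives at
\[
w_j=\big((1^{\max(k_1-1-j,\,0)})_3,\ldots,(1^{\max(k_n-1-j,\,0)})_3\big),\qquad j\ge 0.
\]
Since $k_n=\max_i k_i$, the $n$-th coordinate of $w_j$ equals $(1^{k_n-1-j})_3\ne 0$ for $0\le j\le k_n-2$, and these exponents are pairwise distinct, so $w_0,\ldots,w_{k_n-2}$ are $k_n-1$ distinct non-marked vertices, while $w_{k_n-1}=\mathbf{0}$. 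Thus the reachable subgraph has exactly the $k_n$ vertices $\mathbf{0},w_0,\ldots,w_{k_n-2}$, with a self-loop labeled $0$ at $\mathbf{0}$, the edge from $\mathbf{0}$ to $w_0$ labeled $1$, and the chain $w_0\to w_1\to\cdots\to w_{k_n-2}\to\mathbf{0}$ with every edge labeled $0$; in particular every reachable vertex has an exit edge, so the reachable subgraph is already essential.

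This is precisely the description of $(\mathcal{G}_{k_n},0)$ given in Proposition~\ref{pr42a}, so the assignment $\mathbf{0}\mapsto 0$ and $w_j\mapsto (1^{k_n-1-j})_3$ for $0\le j\le k_n-2$ is a label--preserving isomorphism of pointed graphs. The two presentations then have the same adjacency matrix up to relabeling, hence the same Perron eigenvalue, and Theorem~\ref{th12} yields $\dim_{H}(\mathcal{C}(1,L_{k_1},\ldots,L_{k_n}))=\log_3\beta_{k_n}=\dim_{H}(\mathcal{C}(1,L_{k_n}))$. I expect the only place requiring care to be the coordinatewise bookkeeping of the $0$-transition: verifying that collapsed coordinates do not reactivate, that the top coordinate alone forces the $w_j$ to be distinct for $0\le j\le k_n-2$, and that the part of the Algorithm~B label product reachable from $\mathbf{0}$ is genuinely what the statement means by the pointed graph $\mathcal{G}(0,\ldots,0)$.
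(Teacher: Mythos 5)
Your proposal is correct and follows essentially the same route as the paper's proof: both restrict the label product $\mathcal{G}_{k_1}\star\cdots\star\mathcal{G}_{k_n}$ to the part reachable from $(0,\ldots,0)$, identify it as a $0$-labeled self-loop at the marked vertex plus a single cycle of length $k_n$, and read off the isomorphism with $(\mathcal{G}_{k_n},0)$. Your version is simply more explicit about the coordinatewise bookkeeping (the $\max(k_i-1-j,0)$ formula and the distinctness of the $w_j$ via the top coordinate), which the paper states more tersely.
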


\begin{proof} The presentation $(\mathcal{G}_k,0)$ of $\mathcal{C}(1,L_k)$ constructed with Algorithm A
 consists of a self-loop at the $0$-vertex and a cycle of length $k$ at the $0$-state. 
 Taking in Algorithm B
 the label product $\mathcal{G}_{k_1} \star \cdots \star \mathcal{G}_{k_n}$ gives a graph $\mathcal{G}$ with a self-loop at the $(0,\ldots, 0)$-vertex and a cycle
\begin{equation*}
\xymatrix{ (0,\ldots,0) \ar[r]^{1} & (1^{k_1-1}, \ldots,1^{k_n-1}) \ar[r]^0 &(1^{k_1-2},\ldots,1^{k_n-2}) \ar[r]^0 & \cdots \\ 
& \quad \cdots \ar[r]^0  & (0,\ldots, 0 , 1) \ar[r]^0 & (0,\ldots,0).
\\}
\end{equation*}
This cycle has length $k_n$. We can then see that the graph $\mathcal{G}$ is isomorphic to $\mathcal{G}_{k_n}$ by
 an isomorphism sending $(0,\ldots,0)$ to $0$.
\end{proof}

We next treat multiple intersections drawn from the second family $N_k$. 


\begin{thm}\label{th413}
For the family $N_k= 3^k+1= (10^{k-1}1)_3$ the following hold.

(1) For $1 \le k_1 < k_2 < \cdots < k_n$, one has 
\begin{equation}\label{lowerbound2}
\dim_H(\mathcal{C}(1,N_{k_1}, N_{k_2},\ldots,N_{k_n})) \geq dim_H(\mathcal{C}(1, L_{k_n+1}))
\end{equation}
Equality holds when $k_j = j$ for $1 \le j \le n$.

(2) For fixed $n \ge 1$, there holds 
\begin{equation}\label{3500}
\liminf_{k \rightarrow \infty} \dim_H(\mathcal{C}(1,N_k,\ldots,N_{k+n-1})) \geq \frac{1}{2} (\log_3 2) \approx 0.315464.
\end{equation}
In particular, $\Gamma_{\star} \ge  \frac{1}{2} (\log_3 2).$
\end{thm}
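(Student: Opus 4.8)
The plan is to reduce everything to understanding the label product of the presentations $\mathcal{G}_{N_{k_1}} \star \cdots \star \mathcal{G}_{N_{k_n}}$ and to find, inside it, a subgraph that presents a path set whose entropy we already control. For part (1), I would start from the explicit description of $(\mathcal{G}_{N_k}, v_0)$ given in Proposition \ref{pr45} and Proposition \ref{pr47}: the vertices are indexed by $\{0,1\}$-words of length $k$ (thought of as $3$-adic expansions omitting the digit $2$), an edge labeled $0$ shifts a word and prepends a $0$, and an edge labeled $1$ is available exactly from vertices ending in $0$ (i.e. $\equiv 0 \pmod 3$) and prepends a $1$. The key structural observation is that from the all-zero vertex $\mathbf 0$ in each factor, the admissible label sequences beginning at $\mathbf 0$ are \emph{exactly the same} as in the one-factor case when we only ever issue a $1$ from a vertex whose label is $\equiv 0 \pmod 3$ in \emph{every} coordinate simultaneously. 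Tracing a path with label word $w$ from $(\mathbf 0,\ldots,\mathbf 0)$, coordinate $i$ reaches the state $w$ truncated/padded to length $k_i$; the constraint "may append $1$ only when $\equiv 0 \pmod 3$" in coordinate $i$ means "the last $k_i$ symbols appended are all $0$", and the binding constraint is the one from the largest modulus, $k_n$. Concretely, a label word lies in the path set $X(1,N_{k_1},\ldots,N_{k_n})$ iff after each $1$ there follow at least $k_n$ symbols before the next $1$ — unless... wait, I must be careful: the constraint is that a $1$ may be issued only when \emph{all} coordinates are $\equiv 0 \pmod 3$, and coordinate $i$ is $\equiv 0 \pmod 3$ iff the most recent symbol issued (the units digit, the one that has not yet been "shifted in") together with the accumulated carry is $0$ mod $3$; the cleanest route is to observe directly that the sub-path-set of words in which consecutive $1$'s are separated by at least $k_n+1$ zeros (with the run of $0$'s after the last $1$ unbounded) is contained in $X(1,N_{k_1},\ldots,N_{k_n})$, and this sub-path-set is presented by a cycle of length $k_n+1$ together with a self-loop at the base vertex — which is precisely the presentation $(\mathcal{G}_{L_{k_n+1}},0)$ of $\mathcal{C}(1,L_{k_n+1})$ from Proposition \ref{pr42a}. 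Monotonicity of topological entropy under inclusion of path sets (together with Proposition \ref{pr22a}) then gives \eqref{lowerbound2}. For the equality case $k_j=j$, I would argue that when the moduli are $1,2,\ldots,n$ the constraints from the smaller factors are automatically implied, so the label product collapses onto exactly the $(k_n+1)$-cycle-plus-loop graph, giving equality rather than merely $\ge$; this can be checked directly from the edge rules in Proposition \ref{pr47}, essentially the same bookkeeping as in the proof of Theorem \ref{thm-family1}.

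For part (2), the point is that the lower bound in \eqref{lowerbound2} is uniform: for the window $N_k, N_{k+1}, \ldots, N_{k+n-1}$ the largest modulus is $k+n-1$, so \eqref{lowerbound2} gives
\[
\dim_H(\mathcal{C}(1,N_k,\ldots,N_{k+n-1})) \ge \dim_H(\mathcal{C}(1,L_{k+n})) = \log_3 \beta_{k+n},
\]
and by Theorem \ref{th32}(2) we have $\beta_{k+n} > 1$ for every $k$, with $\beta_{k+n} \to 1$ as $k \to \infty$. That only yields $\liminf \ge 0$, which is too weak — so I will need a better family of sub-path-sets than the single $(k_n+1)$-cycle. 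The right idea: when we have $n$ \emph{distinct} moduli $k, k+1,\ldots,k+n-1$, the common-carry constraint is less restrictive than a single modulus of size $k+n$ would suggest, because the moduli being close together lets us issue $1$'s more often. More precisely, I expect to exhibit inside the label product a subgraph on which a $1$ may be issued every $\Theta(n)$ steps regardless of $k$ — intuitively, once all $n$ coordinates have been flushed, which takes on the order of $k+n$ steps the first time but then recurs with period related to the \emph{spread} $n$ of the moduli rather than their \emph{size} $k$. Carrying this out carefully, I would construct a presentation with a cycle of bounded length (depending only on $n$) carrying a $1$, plus the self-loop, giving a $k$-independent lower bound $\log_3 \beta$ with $\beta>1$ fixed; taking $n$ and the cycle length appropriately recovers $\tfrac12\log_3 2$. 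Alternatively — and this is cleaner — I would prove \eqref{3500} by the more robust route of producing, for each large $k$, an explicit $\{0,1\}$-subshift of $X(1,N_k,\ldots,N_{k+n-1})$ of entropy $\ge \tfrac12\log 2$: take words that in each block of length $2(k+n)$ have the first $k+n$ symbols free in $\{0,1\}$... no, that violates the separation-of-$1$'s condition. The honest version: the condition "$1$'s separated by $\ge k+n$ zeros" has entropy $\frac{\log 2}{k+n+1}$ per symbol, useless. So the real content of (2) must be that with $n$ moduli one can do exponentially better, and the mechanism is that the $n$ carries, being governed by $n$ arithmetic progressions of close-together common differences, realign frequently. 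I would make this precise by a direct analysis of when all $n$ coordinate-states are simultaneously $\equiv 0 \pmod 3$, showing the set of admissible words contains a subshift of finite type with a transition matrix whose Perron eigenvalue is bounded below by $\sqrt 2$ once $n$ is large enough, independently of $k$ — then $\dim_H \ge \log_3\sqrt 2 = \tfrac12\log_3 2$. The final sentence "$\Gamma_\star \ge \tfrac12\log_3 2$" is then immediate from \eqref{Gamma-star} and the inclusion $\mathcal{C}(1,N_{k_1},\ldots,N_{k_n}) \subseteq \mathcal{E}_\star^{(n+1)}(\mathbb Z_3)$, since a set of large Hausdorff dimension inside $\mathcal{E}_\star^{(k)}$ forces $\dim_H \mathcal{E}_\star^{(k)}$ to be large for all $k$.

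The main obstacle is exactly the passage from part (1) to part (2): the crude inclusion used for (1) gives an entropy that decays to $0$ in $k$, so establishing the $k$-independent lower bound $\tfrac12\log_3 2$ requires genuinely exploiting that the $n$ moduli $k, k+1, \ldots, k+n-1$ are \emph{consecutive} (so their pairwise differences are $1,\ldots,n-1$), which controls how often the $n$ carry-states can be simultaneously divisible by $3$. I expect the technical heart to be a lemma describing the simultaneous-divisibility times as a union of residue classes whose density is bounded below by a constant depending only on $n$, from which a subshift of finite type of controlled entropy — and in particular entropy $\ge \tfrac12 \log 2$ for $n$ large — can be extracted. Everything else (the structural description of the graphs, monotonicity of entropy, the translation between Hausdorff dimension and Perron eigenvalue) is already available from Theorem \ref{th12}, Propositions \ref{pr22a}, \ref{pr45}, \ref{pr47}, and Theorem \ref{th32}.
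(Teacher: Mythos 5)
Your part (1) is essentially the paper's argument: the paper exhibits the subset $Y_{k_n}$ of $3$-adic integers whose nonzero digits have consecutive gaps $\ge k_n+1$, notes that $N_{k_j}\lambda=\lambda+3^{k_j}\lambda$ is carry-free for such $\lambda$, and identifies $Y_{k_n}$ with $\mathcal{C}(1,L_{k_n+1})$; the equality case for $k_j=j$ is likewise only sketched there. One bookkeeping slip: gaps of size $\ge k_n+1$ between digit positions means at least $k_n$ (not $k_n+1$) intervening zeros, and it is this set that is presented by the $(k_n+1)$-cycle plus self-loop, i.e.\ equals $\mathcal{C}(1,L_{k_n+1})$. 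Requiring $k_n+1$ zeros as you state would give only $\mathcal{C}(1,L_{k_n+2})$, a strictly weaker (though still nonzero) bound, and would break the claimed equality case.

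Part (2) has a genuine gap, and moreover you talk yourself out of the correct idea. The mechanism is \emph{not} that the consecutive moduli let you issue $1$'s more often, nor any realignment of residue classes of carry-states; no short cycle carrying a $1$ exists uniformly in $k$ (e.g.\ $1$'s at all even positions already fails for every even modulus $k_j$, since positions $p$ and $p+k_j$ collide and produce a digit $2$). What the paper actually does is exploit a \emph{free window of length $\sim k$ immediately after issuing a single $1$ from the all-zero product state}: in the presentation of $\mathcal{C}(1,N_j)$ the edge labeled $1$ sends $0$ to $(10^{j-1})_3$, the new $1$ sits at the top of the length-$j$ register, and for the next $j-1$ steps the bottom digit stays in $\{0,1\}$-shift position so that \emph{both} edge labels are admissible; only after the $1$ percolates to the bottom is the label forced. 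Intersecting over $j\in\{k,\dots,k+n\}$ gives $k-1$ completely free symbols (which may themselves be $1$'s --- this is where your objection ``that violates the separation-of-$1$'s condition'' goes wrong: separation of $1$'s is only a \emph{sufficient} condition inherited from part (1), not a necessary feature of $X(1,N_k,\dots,N_{k+n})$), followed by $k+n$ forced zeros to flush every coordinate back to the all-zero state. Each block of length $2k+n$ thus carries $\ge 2^{k-1}$ admissible words, so the Perron eigenvalue satisfies $\beta_{n,k}^{\,2k+n}\ge 2^{k-2}$ and $\liminf_k\beta_{n,k}\ge\sqrt 2$, whence $\liminf_k\dim_H\ge\tfrac12\log_3 2$. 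Without this block construction your proposal reduces to an unproven hope (``I expect the technical heart to be a lemma\dots''), and the specific lemma you posit --- simultaneous divisibility times forming residue classes of density depending only on $n$ --- is not what drives the bound and would not by itself produce entropy $\tfrac12\log 2$. The concluding deduction $\Gamma_\star\ge\tfrac12\log_3 2$ from \eqref{Gamma-star} is fine once \eqref{3500} is in hand.
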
  

\begin{proof}
(1) It is easy to see that the  set $\mathcal{C}(1,N_{k_1}, N_{k_2},\ldots,N_{k_n})$ contains the set
$$
Y_{k_n} : = \{ \lambda = \sum_{j=1}^{\infty} 3^{\ell_1 + \cdots + \ell_j} \in \ZZ_{3, \bar{2}}: \, \mbox{all}~~\ell_j \ge k_n +1\},
$$
(Here we allow finite sums, corresponding to some $\ell_j = +\infty$). 
This fact holds by observing that if $\lambda \in Y_{k,n}$ then $N_{k_j} \lambda \in \Sigma_{3, \bar{2}}$ for $1 \le j \le n$,
because 
$$
N_{k_j}\lambda = (\sum_{j=1}^{\infty} 3^{\ell_1 + \cdots + \ell_j}) + (\sum_{j=1}^{\infty} 3^{\ell_1 + \cdots + \ell_j+k_j}) 
$$
and the $3$-adic addition has no carry operations since all exponents are distinct.
The set $Y_{k_n}$  is a $3$-adic path set fractal 
and it is easily checked to be identical with $\mathcal{C}(1, L_{n_k+1})$, using the structure of its associated
graph. This proves \eqref{lowerbound2}. 
     To show equality holds, one must show  that allowable sequences for each of $N_{1}, N_{2}, ... ,N_{n}$
  require gaps of size at least $n+1$ between each successive nonzero $3$-adic digit in an element
  of $\mathcal{C}(1, N_1, N_2, ..., N_n).$  This can be done by induction on the current non-zero $3$-adic digit;
  we omit details.

(2) We study the symbolic dynamics of the elements of the underlying path sets in  
$\mathcal{C}(1,N_{k+j-1})$, for $1 \le j \le n$, given in
Theorem \ref{th34}, and use this to lower bound the 
Hausdorff dimension. \medskip


\noindent {\bf Claim.}  {\em The $3$-adic path set underlying $\mathcal{C}(1,N_k,\ldots,N_{k+n})$ contains all symbol sequences
which, when subdivided into successive blocks of length $2k + n$,
have every such block of the form
$$(00 \cdots 00 a_k a_{k-1} \cdots a_3 a_2 1)_3 \,\, \mbox{with each} \,\, a_i \in \{0,1\}.
$$ }

\begin{proof}[Proof of claim.] It suffices to show that all sequences split into blocks of length $2k+n$ of the form
 $(00 \cdots 00 a_k a_{k-1} \cdots a_3 a_2 1)_3$ occur in $\mathcal{C}(1,N_j)$ for each $k \leq j \leq k+n$, 
 since this will imply the statement for the label product. 
 Consider the presentation $\mathcal{G}_j$ of $\mathcal{C}(1,N_j)$ given by our algorithm. Beginning at the $0$-vertex, an edge labeled $1$ takes us to the state $(10^{j-1})_3$. From a vertex whose label ends in $0$, one may traverse an edge with label $1$ or $0$. But if we are at a vertex whose
 labeled $a0$, an edge labeled $0$ takes us to a vertex labeled $a$, 
 and an edge labeled $1$ takes us to a vertex labeled $1a$ (this is specific to the case of $N_j$). In other words, we apply the truncated shift map to our vertex label and either concatenate with $1$ on the left or not.
  It follows that from the vertex $(10^{j-1})_3$ the next $(j-1)$ edges traversed may be labeled either $0$ or $1$.

At this point the initial $1$ from $(10^{j-1})_3$ has moved to the far right of our vertex label. Therefore, our choice is restricted: 
we must traverse an edge labeled $0$. Since our vertex label, 
whatever it is, consists of only $0$'s and $1$'s, we can in any case traverse $j$ or more consecutive edges
 labeled $0$ to get back to the $0$-vertex. Thus, first traversing an edge labeled $1$, then 
 traversing edges labeled $0$ or $1$ freely for the next $(k-1)$-steps, then traversing $k+n$ edges 
 labeled $0$  and returning to the $0$-vertex, is possible in the graph $\mathcal{G}_j$ for each 
 $k \leq j \leq k+n$. It follows that all sequences of the desired form are in each $\mathcal{C}(1,N_j)$, 
 and hence in $\mathcal{C}(1,N_k \ldots, N_{k+n})$, proving the claim.
\end{proof}

With this claim in hand, we see that each block of size $(2k+n$ contains at least 
$2^{k-2}$ admissible $(2k + n)$-blocks in
 $\mathcal{C}(1,N_k,\ldots,N_{k+n})$. We conclude that the maximum eigenvalue $\beta_{n,k}$
 of the adjacency matrix of the graph $\mathcal{G}_{n,k}$ of $\mathcal{C}(1, N_k, N_{k+1}, \cdots, N_{k+n-1})$
 must satisfy  
 $(\beta_{n,k})^{2n+k} \ge 2^{k-2}.$
 This yields
 $$
  \beta_{n,k}   \ge  2^{\frac{k-2}{k+2n}}.
 $$
 and hence $\liminf_{k \to \infty} \beta_{n,k} \ge \sqrt{2}$.
 The Hausdorff dimension formula  in Proposition \ref{pr22a}  then yields
 \begin{equation}
 \limsup_{k \rightarrow \infty} \dim_{H}\big(\mathcal{C}(1,N_k,\ldots,N_{k+n})\big) \ge
  \limsup_{k \rightarrow \infty} \log_3 \beta_{n,k} \ge \frac{1}{2} \log_3 2.
  \end{equation}
 as asserted.
 
 The  lower bound  $\Gamma_{\star} \ge \frac{1}{2} \log_3 2$ follows immediately from this bound, see
 \eqref{Gamma-star}.
 \end{proof}

%
%
%

\section{Applications } \label{sec5}

We give several applications to improving bounds for the Hausdorff dimension of
various sets. 

%
%
%

\subsection{Hausdorff dimension of the generalized exceptional set $\mathcal{E}_\star(\mathbb{Z}_3)$} \label{sec51}

Theorem \ref{th413} (2) shows that there are arbitrarily large families 
$\mathcal{C}(1, N_{k_1}, ..., N_{k_n})$ having Hausdorff dimension uniformly
bounded below.
If one properly restricts the choice of the $N_{k_j}$ then one can obtain an infinite
set in this way, as was pointed out to us  by Artem Bolshakov. 
It yields a nontrivial lower bound on the Hausdorff dimension of the 
generalized exceptional set.


\begin{thm}\label{th51a} {\em (Lower Bound for Generalized Exceptional Set)}

(1) The subset $Y$ of the $3$-adic Cantor set $\Sigma_{3, \bar{2}}$ given by
$$
Y := \{ \lambda :=\sum_{j=0}^{\infty} a_j 3^j : \mbox{all}~~ a_{2k} \in \{0, 1\}, \,\, \mbox{all}\,\, a_{2k+1}=0\} \subset \mathbb{Z}_3.
$$
is a $3$-adic path set fractal having  $\dim_{H}(Y) = \frac{1}{2} \log_3 2 \approx 0.315464$. 
This set  satisfies 
$$
Y \subset \mathcal{C}(1, N_{2k+1}), \,\, \mbox{ for all} \,\,k \ge 0,
$$
where $N_k = 3^k+1$, and in  consequence
$$Y \subseteq \bigcap_{k=1}^{\infty} \mathcal{C}(1, N_{2k+1}).$$

(2) One has 
 \begin{equation}\label{bdd1}
\dim_{H} \Big( \{ \lambda \in \Sigma_{3, \bar{2}}:  \,\,  N_{2k+1} \lambda \in \Sigma_{3, \bar{2}} \,\, \mbox{for all}\, k \ge 0\}\Big)
\ge \dim_{H}(Y) = \frac{1}{2} \log_3 2.
\end{equation}
Therefore
\begin{equation} \label{bdd2}
\dim_{H} (\mathcal{E}_{\ast}) \ge \frac{1}{2} \log_3 2 = 0.315464.
\end{equation}
\end{thm}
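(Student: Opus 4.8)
The plan is to verify directly that the explicitly described set $Y$ is a $3$-adic path set fractal, compute its Hausdorff dimension via the entropy formula of Proposition \ref{pr22a}, check the containment $Y \subset \mathcal{C}(1, N_{2k+1})$ for every $k$ by a no-carry argument, and then read off the lower bounds \eqref{bdd1} and \eqref{bdd2} from the definition of the generalized exceptional set.

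First I would exhibit a presentation of $Y$. The defining condition — even-indexed digits free in $\{0,1\}$, odd-indexed digits forced to $0$ — is recognized by a two-vertex graph: a marked vertex $v_0$ with two exit edges labeled $0$ and $1$ both going to a vertex $v_1$, and $v_1$ with a single exit edge labeled $0$ returning to $v_0$. This is right-resolving, connected, and essential, so by Proposition \ref{pr22a}(1) the set $Y = \phi_3(X_{\mathcal{G}}(v_0))$ is a $3$-adic path set fractal. Its adjacency matrix is $\left(\begin{smallmatrix} 0 & 2 \\ 1 & 0 \end{smallmatrix}\right)$ (or, on the split $0/1$ presentation, a $2\times 2$ matrix with Perron eigenvalue $\sqrt 2$), whose Perron eigenvalue is $\beta = \sqrt 2$; hence $\dim_H(Y) = \log_3 \sqrt 2 = \tfrac12 \log_3 2$ by Proposition \ref{pr22a}(2). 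Alternatively one may argue more elementarily: $Y$ is measure-theoretically a product over the even coordinates of a two-point set, so its dimension is half that of the full Cantor set $\Sigma_{3,\bar 2}$, namely $\tfrac12\log_3 2$.

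Next I would prove the containment. Take $\lambda = \sum_{j\ge 0} a_j 3^j \in Y$, so $a_j = 0$ whenever $j$ is odd. Then for any $k \ge 0$,
\[
N_{2k+1}\lambda = (3^{2k+1}+1)\lambda = \lambda + 3^{2k+1}\lambda = \sum_{j\ge 0} a_j 3^j + \sum_{j \ge 0} a_j 3^{j + 2k+1}.
\]
In the first sum the only nonzero coefficients occur at even exponents $j$; in the second sum they occur at odd exponents $j + 2k + 1$. Thus the two supports are disjoint, the $3$-adic addition involves no carries, and every digit of $N_{2k+1}\lambda$ lies in $\{0,1\}$. Hence $N_{2k+1}\lambda \in \Sigma_{3,\bar 2}$, and since also $\lambda \in \Sigma_{3,\bar 2}$ we get $\lambda \in \mathcal{C}(1, N_{2k+1})$. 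As $k$ was arbitrary, $Y \subseteq \bigcap_{k\ge 1}\mathcal{C}(1, N_{2k+1})$. Monotonicity of Hausdorff dimension then gives \eqref{bdd1}: the intersection set in question contains $Y$, so its dimension is at least $\tfrac12\log_3 2$. Finally, every $\lambda$ in that intersection set omits the digit $2$ in $(M\lambda)_3$ for $M = 1$ and for the infinitely many distinct integers $M = N_{2k+1} = 3^{2k+1}+1 \equiv 1 \,(\bmod\,3)$, so by Definition \ref{defn-generalized} it lies in $\mathcal{E}_\star(\mathbb{Z}_3)$; hence the intersection set is contained in $\mathcal{E}_\star$, and \eqref{bdd2} follows from \eqref{bdd1}.

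I do not anticipate a serious obstacle here: the result is essentially a clean packaging of the no-carry phenomenon already exploited in the proof of Theorem \ref{th413}(1), combined with the dimension formula. The only point requiring a little care is confirming that the two-vertex graph really is a legitimate path set presentation in the sense of the paper (right-resolving, essential, all vertices reachable) so that Proposition \ref{pr22a} applies verbatim; this is routine. One should also double-check the edge case $N_1 = 4 = (11)_3$ to make sure the containment $Y \subset \mathcal{C}(1,4)$ holds — it does, since $4\lambda = \lambda + 3\lambda$ again has disjoint (even vs. odd) digit supports — so the conclusion is uniform in $k \ge 0$.
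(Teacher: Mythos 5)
Your proposal is correct and follows essentially the same route as the paper: the same two-vertex presentation of $Y$ with adjacency matrix of Perron eigenvalue $\sqrt{2}$ (this is exactly the paper's Figure 5.1), the same disjoint-support/no-carry computation showing $N_{2k+1}\lambda = \lambda + 3^{2k+1}\lambda \in \Sigma_{3,\bar{2}}$, and the same monotonicity step to deduce \eqref{bdd1} and \eqref{bdd2}. No gaps.
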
  

\begin{proof}
 (1) The $3$-adic path set fractal property of $Y \subset \Sigma_{3, \bar{2}}$ is easily established, since the
underlying graph of its symbolic dynamics is pictured in Figure ~\ref{fig51}.
The Perron eigenvalue of its adjacency matrix is $\sqrt{2}$, and its  Hausdorff dimension is
$\frac{1}{2} \log_3 2$ by Proposition \ref{pr22a}.

 \begin{figure}[ht]\label{fig51}
	\centering
	\psset{unit=1.3pt}
	\begin{pspicture}(-125,-20)(125,30)
	\newcommand{\nodeq}[2]{\node{#1}{#2}{q}}
	\nodeq{0}{-125,0}
	\nodeq{1}{125,0}
	\aline{q0}{q1}{1}
	\dline{q0}{q1}{0}{0}
	\end{pspicture}
	 \bigskip
	\newline
	\hskip 0.5in {\rm FIGURE 5.1.} Presentation of $Y$.	
	\newline
	\newline
\end{figure}

  The elements of  $Y$  can be rewritten in  the form
$\lambda =\sum_{j=0}^{\infty} b_{2j} 3^{2j},$
with all $b_{2j} \in \{0, 1\}$. We then have
$$
N_{2k+1} \lambda = \sum_{j=0}^{\infty} b_{2j}3^{2j} + \sum_{j=0}^{\infty} b_{2j} 3^{2j+2k+1} \in \Sigma_{3, \bar{2}},
$$
and the inclusion in the 
Cantor set $\Sigma_{3, \bar{2}}$ follows 
because the sets of $3$-adic exponents in the two sums on the right side are disjoint, so  there are no carry operations
in combining them under $3$-adic addition. This establishes that 
$Y \subset \mathcal{C}(1, N_{2k+1})$.

(2)  All elements $\lambda \in Y$ have $N_{2k+1} \lambda \in \Sigma_{3, \bar{2}}$ for
all $k \ge 1$. Thus  
$$
Y \subset \{ \lambda \in \Sigma_{3, \bar{2}}:  \,\,  N_{2k+1} \lambda \in \Sigma_{3, \bar{2}} \,\,\mbox{for all}\, k \ge 1\}.
$$
The result \eqref{bdd1} follows, from which \eqref{bdd2} is immediate.
\end{proof}

Theorem \ref{th110} is included as part (2) of this result.

%
%
%
\subsection{Bounds for approximations to the  exceptional set $\mathcal{E}(\mathbb{Z}_3)$}\label{sec53}

We conclude with numerical results concerning Hausdorff dimensions
of the upper approximation sets  $\mathcal{E}^{(k)}(\mathbb{Z}_3)$
to the exceptional set $\mathcal{E}(\mathbb{Z}_3)$.
Recall that the only powers of $2$ that are known to  have ternary expansions that omit the digit $2$
are $2^0= 1=(1)_3, 2^2= 4 = (11)_3$, and $2^8=256= (10111)_3$. In contrast $2^4 = 16 = (121)_3$ and $2^6=64= (2101)_3$.

We begin with empirical results about the sets $\mathcal{C}(1,2^{m_1},\ldots,2^{m_n})$ obtained via 
Algorithm A. Here we note the necessary condition $2^{2n} \equiv 1 ~(\bmod \, 3)$  for positive Hausdorff
dimension.

%
%

\begin{minipage}{\linewidth}\label{tab54}
\begin{center}
\begin{tabular}{|c |r @{.} l |}
\hline
Set & 
\multicolumn{2}{c}{Hausdorff dimension} \vline \\
\hline
$\mathcal{C}(1,2^2)$ & 0&438018 \\
$\mathcal{C}(1,2^4)$ & 0&255960 \\
$\mathcal{C}(1, 2^6)$ & 0&278002 \\
$\mathcal{C}(1, 2^8)$ & 0&287416 \\
$\mathcal{C}(1, 2^{10})$ & 0&215201 \\
$\mathcal{C}(1,2^{12})$ & 0&244002 \\
$\mathcal{C}(1,2^{14})$ & 0&267112 \\
\hline
$\mathcal{C}(1,2^2,2^4)$ & 0&\\
$\mathcal{C}(1,2^2,2^6)$ & 0& \\
$\mathcal{C}(1,2^2,2^8)$ & 0&228392 \\
$\mathcal{C}(1,2^2,2^{10})$ & 0& \\
$\mathcal{C}(1,2^4,2^6)$ & 0 &\\
$\mathcal{C}(1,2^4,2^8)$ & 0 &\\
$\mathcal{C}(1,2^4,2^{10})$ & 0& \\
$\mathcal{C}(1,2^6,2^8)$ & 0 &\\
$\mathcal{C}(1,2^6,2^{10})$ & 0& \\
$\mathcal{C}(1,2^8,2^{10})$ & 0& \\
\hline
$\mathcal{C}(1,2^2,2^8,2^{12})$ & 0& \\
$\mathcal{C}(1,2^2,2^8,2^{14})$ & 0 &\\ 
$\mathcal{C}(1,2^2,2^8,2^{16})$ & 0 & \\ \hline
\end{tabular} \par
\bigskip
\hskip 0.5in {\rm TABLE 5.2.} Hausdorff dimension of $\mathcal{C}(1,2^{m_1},\ldots,2^{m_k})$ (to six decimal places)
\newline
\newline
\end{center}
\end{minipage}

\begin{thm} \label{th15}
The following bounds hold for sets $\mathcal{E}^{(k)}(\mathbb{Z}_3)$.
\begin{eqnarray*} \dim_H(\mathcal{E}^{(2)}(\mathbb{Z}_3)) &\geq &\log_3\bigg(\frac{1 + \sqrt{5}}{2}\bigg) \approx 0.438018,\\
\dim_H(\mathcal{E}^{(3)}(\mathbb{Z}_3)) & \ge& \log_3 \beta_1 \approx  0.228392,
\end{eqnarray*}
where $\beta_1 \approx 1.28520$ is a root of $\lambda^6-\lambda^5 -1=0$.
\end{thm}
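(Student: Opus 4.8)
The plan is, for each of $k=2$ and $k=3$, to exhibit one explicit set of the form $\mathcal{C}(1,2^{m_1},\ldots,2^{m_{k-1}})$ whose Hausdorff dimension is already known, and then to apply monotonicity of Hausdorff dimension under inclusion. Since $\mathcal{E}^{(k)}(\mathbb{Z}_3)=\bigcup_{0\le m_1<\cdots<m_k}\mathcal{C}(2^{m_1},\ldots,2^{m_k})$, the choice $m_1=0$ gives $\mathcal{C}(1,2^{m_2},\ldots,2^{m_k})\subseteq\mathcal{E}^{(k)}(\mathbb{Z}_3)$ for all $0<m_2<\cdots<m_k$ (equivalently one invokes \eqref{1016}). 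For $k=2$ I would take $m_2=2$. Since $2^2=4=(11)_3=L_2$, with $L_j=(1^j)_3=\frac{1}{2}(3^j-1)$, Theorem~\ref{th17a} gives $\dim_{H}(\mathcal{C}(1,4))=\log_3\beta_2$, where $\beta_2$ is the root $>1$ of $\lambda^2-\lambda-1=0$, i.e. $\beta_2=\frac{1+\sqrt5}{2}$; this is the first claimed inequality. (Equivalently, Algorithm~A presents $X(1,4)$ as the golden mean shift.)

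For $k=3$ I would take $(m_2,m_3)=(2,8)$ and work with $\mathcal{C}(1,4,256)$, where $256=3^5+3^2+3+1=(100111)_3$. The key step is the containment
\[
\mathcal{C}(1,L_6)\ \subseteq\ \mathcal{C}(1,4)\cap\mathcal{C}(1,256)\ =\ \mathcal{C}(1,2^2,2^8).
\]
By Proposition~\ref{pr42a}, the presentation of $\mathcal{C}(1,L_6)$ shows it is exactly the set of $\lambda=\sum_{j\ge 0}a_j3^j\in\Sigma_{3,\bar{2}}$ whose nonzero ternary digits lie at positions that are pairwise at distance at least $6$. For such $\lambda$ the product $4\lambda=(1+3)\lambda=\sum_k(a_k+a_{k-1})3^k$ has every coefficient $a_k+a_{k-1}\le 1$, because the positions $k,k-1$ lie within distance $1<6$; hence no carries occur and $4\lambda\in\Sigma_{3,\bar{2}}$. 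Likewise $256\lambda=(1+3+9+3^5)\lambda=\sum_k(a_k+a_{k-1}+a_{k-2}+a_{k-5})3^k$, and since the positions $k,k-1,k-2,k-5$ are pairwise within distance $5<6$, at most one of these four digits of $\lambda$ is nonzero, so every coefficient is $\le 1$, no carries occur, and $256\lambda\in\Sigma_{3,\bar{2}}$. Thus $\lambda\in\mathcal{C}(1,4)\cap\mathcal{C}(1,256)$, proving the containment. By Theorem~\ref{th17a} again,
\[
\dim_{H}\big(\mathcal{E}^{(3)}(\mathbb{Z}_3)\big)\ \ge\ \dim_{H}\big(\mathcal{C}(1,2^2,2^8)\big)\ \ge\ \dim_{H}\big(\mathcal{C}(1,L_6)\big)\ =\ \log_3\beta_1,
\]
where $\beta_1$ is the root $>1$ of $\lambda^6-\lambda^5-1=0$, as asserted. (Applying Algorithms~A and~B to $(4,256)$ verifies that equality in fact holds in the middle, recovering the value listed in Table~5.2.)

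The only step requiring genuine care is the ``no carries'' claim: one must check that when the nonzero digits of $\lambda$ are spaced at least $6$ apart, each formal coefficient of $4\lambda$ and of $256\lambda$ equals $0$ or $1$. This is immediate once one observes that the ternary digit supports of $4$ and of $256$ are contained in windows of widths $2$ and $6$ respectively, so no two shifted copies of a nonzero digit of $\lambda$ ever land in the same position. I do not foresee a serious obstacle; the one point of cleverness is the choice of exponents, namely observing that $2^2$ is $L_2$ itself and that $2^8=256$ happens to have ternary support confined to a width-$6$ window, which is exactly what lets both multiplicative translates act carry-freely on the width-$6$ set $\mathcal{C}(1,L_6)$ whose dimension is known from Theorem~\ref{th17a}.
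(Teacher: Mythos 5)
Your proof is correct, and for the $k=3$ bound it takes a genuinely different (and arguably more self-contained) route than the paper. For $k=2$ the two arguments coincide: both exhibit $\mathcal{C}(1,2^2)=\mathcal{C}(1,4)$ inside $\mathcal{E}^{(2)}(\mathbb{Z}_3)$ and read off its dimension from one of the two analyzed infinite families --- the paper notes $4=N_1=(11)_3$ and cites Theorem~\ref{th14}, while you note $4=L_2=(11)_3$ and cite Theorem~\ref{th17a}; since $\beta_2=\frac{1+\sqrt5}{2}$, these are the same computation. For $k=3$ the paper simply asserts $\dim_H(\mathcal{C}(2^0,2^2,2^8))=\log_3\beta_1$, which rests on running Algorithms~A and~B on the pair $(2^2,2^8)$ and computing the Perron eigenvalue of the resulting adjacency matrix (the value recorded in Table~5.2). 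You instead prove the needed inequality by hand via the containment $\mathcal{C}(1,L_6)\subseteq\mathcal{C}(1,2^2,2^8)$: your identification of $\mathcal{C}(1,L_6)$ with the gap-$\ge 6$ sequences is exactly what Proposition~\ref{pr42a} gives, and your no-carry argument is sound, since the ternary supports $\{0,1\}$ of $4$ and $\{0,1,2,5\}$ of $256$ each have diameter at most $5$, so shifted copies of distinct nonzero digits of $\lambda$ never collide. This is the same digit-disjointness device the paper itself uses in Theorems~\ref{th413} and~\ref{th51a}, so it fits the paper's toolkit well. What you lose is only the knowledge that equality actually holds for $\mathcal{C}(1,2^2,2^8)$ (which the machine computation supplies but the theorem does not require); what you gain is a human-checkable proof of the stated lower bound that does not depend on trusting the computation.
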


\begin{proof}
 We have
\begin{eqnarray*} 
\dim_H(\mathcal{E}^{(2)}(\mathbb{Z}_3)) &=&  \sup_{0\leq m_1 < m_2} \dim_H(\mathcal{C}(2^{m_1},2^{m_2}))\\
& \ge & \dim_H(\mathcal{C}(2^0,2^2)) = \log_3\left(\frac{1 + \sqrt{5}}{2}\right).
\end{eqnarray*}
The bound for $N_1= 2^2 = (11)_3$  follows from Theorem \ref{th14}, taking $k=1$.

We also have
\begin{eqnarray*} 
\dim_H(\mathcal{E}^{(3)}(\mathbb{Z}_3)) &=&  \sup_{0\leq m_1 < m_2<m_3} \dim_H(\mathcal{C}(2^{m_1},2^{m_2}, 2^{m_3}))\\
& \ge & \dim_H(\mathcal{C}(2^0,2^2,2^8)) 
= \log_3 \beta_1 \approx 0.228392
\end{eqnarray*}
where $\beta_1 \approx 1.28520...$ is a root of $\lambda^6- \lambda^5 -1=0.$
\end{proof}

It is unclear whether $\dim_H(\mathcal{E}^{(k)}(\mathbb{Z}_3))$ is positive for any $k \ge 4$. 
Currently  $\mathcal{C}(1,2^2, 2^8)$ is the only component of $\mathcal{E}^{(3)}(\mathbb{Z}_3)$  known  
to have positive Hausdorff dimension. At present we do not know of any set
 $\mathcal{C}(1,2^{m_1},2^{m_2},2^{m_3})$  that has  positive Hausdorff dimension.

\subsection*{Acknowledgments}  
The authors thank  Artem Bolshakov for making the key  observation
that Theorem \ref{th51a} should hold.
  W. Abram acknowledges the support of an
NSF  Graduate Research Fellowship and of the University of Michigan, where this work was carried out. 

%
%
%

\end{document}